\documentclass[11pt,leqno]{amsart}

\usepackage{amsmath,amsthm,amsfonts,amssymb}

\usepackage{enumerate}
\usepackage{mathtools}
\usepackage{graphicx}
\usepackage{tikz}
\usepackage{tikz-cd}
\usepackage{float}
\usepackage{hyperref}
\usepackage{enumitem}
\usepackage{algorithm}
\usepackage{algpseudocode}
\usepackage{comment}
\usepackage[english]{babel}
\usepackage[margin=1.15in]{geometry}

\theoremstyle{plain}

\usepackage{aliascnt}
\usepackage{cleveref}
\newtheorem{theorem}{Theorem}[section]

\newaliascnt{proposition}{theorem}
\newtheorem{proposition}[proposition]{Proposition}
\aliascntresetthe{proposition}

\newaliascnt{corollary}{theorem}
\newtheorem{corollary}[corollary]{Corollary}
\aliascntresetthe{corollary}

\newaliascnt{lemma}{theorem}
\newtheorem{lemma}[lemma]{Lemma}
\aliascntresetthe{lemma}

\newaliascnt{definition}{theorem}
\newtheorem{definition}[definition]{Definition}
\aliascntresetthe{definition}

\newaliascnt{conjecture}{theorem}

\aliascntresetthe{conjecture}

\newaliascnt{question}{theorem}
\newtheorem{question}[question]{Question}
\aliascntresetthe{question}

\newaliascnt{example}{theorem}
\newtheorem{example}[example]{Example}
\aliascntresetthe{example}

\newaliascnt{claim}{theorem}

\aliascntresetthe{claim}

\newaliascnt{remark}{theorem}
\newtheorem{remark}[remark]{Remark}
\aliascntresetthe{remark}

\newaliascnt{problem}{theorem}

\aliascntresetthe{problem}

\newtheorem{thmx}{Theorem}

\newcommand\T{{\mathcal{T}}}

\DeclareMathOperator{\Aut}{Aut}
\DeclareMathOperator{\St}{St}

\DeclareMathOperator{\Rist}{Rist}

\newcommand{\alt}{\mathrm{Alt}}
\newcommand{\Sym}{\mathrm{Sym}}
\newcommand{\Syl}{\mathrm{syl}}

\newcommand{\Dfive}{\mathcal D_{5}}
\DeclareMathOperator{\st}{st}
\DeclareMathOperator{\rist}{rist}

\newcommand{\ml}[1]{\textcolor{magenta}{#1}}

\numberwithin{equation}{section}

\title[Spinal Hanoi Towers Groups]{Spinal Hanoi Towers Groups}

\author[F.~Cavalieri]{Francesca Cavalieri}
\address{Francesca Cavalieri: Università di Padova, Dipartimento di Matematica ``Tullio Levi-Civita", Via Trieste 63, 35121 Padova, Italy; Department of Mathematics, University of the Basque Country UPV/EHU, 48080 Bilbao, Spain}
\email{francesca.cavalieri.1@phd.unipd.it}

\author[M.\,E.~Garciarena]{Mikel E.\ Garciarena}
\address{Mikel E.\ Garciarena: Heinrich Heine University D\"usseldorf,
Faculty of Mathematics and Natural Sciences,
Mathematical Institute, Universit\"atsstr.\ 1, 40225 D\"usseldorf, Germany.}
\email{mikel.eguzki.garciarena.perez@hhu.de}

\author[M.~Noce]{Marialaura Noce}
\address{Marialaura Noce: Dipartimento di Informatica, Universit\`a di Salerno, 84084 Fisciano, Italy}
\email{mnoce@unisa.it}

\date{}

\keywords{Groups acting on rooted trees, self-similar groups, branch groups, contracting groups}

\subjclass[2020]{Primary 20E08; Secondary 20E07, 20D15, 20F65}

\begin{document}

\begin{abstract}
We introduce and study \emph{spinal Hanoi towers groups}, a family of groups acting on the $d$-adic tree that contains both the classical Hanoi towers group $\mathcal{H}^{(3)}$ and Skipper's generalizations as extreme cases. Each group is generated by $d$ automorphisms $a_1,\dots,a_d$, where $a_i$ has a unique non-trivial section, equal to $a_i$ itself, at the $i$-th coordinate, and root permutation $\sigma_i$ fixing $i$. The entire construction is thus encoded by the finite permutation group $P=\langle\sigma_1,\dots,\sigma_d\rangle\leq\mathrm{Sym}(d)$, and we develop a dictionary between the two: $G$ is fractal and level transitive if and only if $P$ is transitive; every group in the family is amenable and contracting, with explicit nucleus and a word problem solved by a length-halving recursion on syllables; and the abelianizations of $G$ and $P$ together control the first level stabilizer.

The branch structure of the family is governed by the subgroup $J\leq\Aut(\T_d)$, generated by the automorphisms with exactly two non-trivial sections, occupied by an element $h\in G$ and its inverse. We give a criterion for the containment $J\leq G$ that can be verified in an explicit finite quotient, and we prove that whenever it holds, a level transitive spinal Hanoi towers group is strongly fractal, regular branch over its commutator subgroup, has explicitly described rigid stabilizers at every level, and is just infinite. As an application, we show that the groups of type $(d,m)$, whose root permutations are $m$-cycles, satisfy $J\leq G$ for all $d\geq 4$ and are therefore just infinite, in contrast with the classical case of the Hanoi towers group on three pegs. 
\end{abstract}

\maketitle

%\tableofcontents

\section{Introduction}

Groups acting on regular rooted trees provide a rich source of examples and counterexamples in infinite group theory. Since the pioneering work of Grigorchuk \cite{Grigorchuk84}, who constructed the first finitely generated group of intermediate word growth, self-similar groups have played a central role in geometric and combinatorial group theory, as well as in dynamics, by virtue of their connections to iterated monodromy groups and limit dynamical systems \cite{BartholdiNekrashevych08, ssg}. Important classes of such groups include branch groups (see the survey by Bartholdi, Grigorchuk and \v{S}uni\'c \cite{BGS03} and Grigorchuk's foundational paper \cite{JI grigorchuk}), contracting groups, and groups defined by finite-state automata; we refer to the monograph of Nekrashevych \cite{ssg} for a comprehensive treatment.

Among the historically most prominent examples of self-similar groups, beyond the Grigorchuk group itself \cite{Grigorchuk80}, are the Gupta-Sidki $p$-groups \cite{GuptaSidki83}, providing the first finitely generated infinite $p$-groups for odd primes, and the Basilica group of Grigorchuk and \.{Z}uk \cite{GrigorchukZuk02}, whose amenability was established by Bartholdi and Vir\'ag \cite{BartholdiVirag05}. Closer to the present work is the \emph{Hanoi towers group} $\mathcal{H}^{(3)}$, introduced by Grigorchuk and \v{S}uni\'c in~\cite{Sunic, GrigorchukSunik07}. This group acts on the ternary tree and is  connected to the classical puzzle on three pegs: its Schreier graphs at level $n$ coincide with the Hanoi graphs on $n$ disks. The group $\mathcal{H}^{(3)}$ is self-similar, contracting, regular branch over its commutator subgroup, and amenable.

Several generalizations of $\mathcal{H}^{(3)}$ have been considered in the literature. Skipper \cite{Rachel} extended the construction to $d$-ary trees by taking generators whose root permutations are $(d{-}1)$-cycles, and studied the resulting branch structure, the congruence subgroup property and the Hausdorff dimension. Modified Hanoi towers groups, allowing more general patterns of non-trivial sections, were investigated by Makisumi, Stadnyk and Steinhurst \cite{modified}. In a different direction, replacing the $\mathcal{H}^{(3)}$-style relations with a Basilica-like recursion has led to the family of \emph{$p$-Basilica groups} of Di Domenico, Fern\'andez-Alcober, Noce and Thillaisundaram \cite{p-Basilica}, which provided the first examples of weakly branch (but not branch) groups with the congruence subgroup property; algorithmic aspects of the Hausdorff dimension of regular branch groups have been further studied by Fari\~na-Asategui \cite{jorge} and, jointly with Garciarena, in \cite{FG24}.

The aim of this paper is to show that, for the family introduced here, questions about an infinite group acting on a rooted tree can be systematically reduced to computations in a finite permutation group. In this way, fractality, level transitivity, the structure of the first level stabilizer, and ultimately the branch structure itself become finite, and often algorithmic, checks.

\medskip

Given the $d$-adic tree
$\T_d$, we call a group $G=\langle a_1,\dots,a_d\rangle\leq\Aut(\T_d)$ a
\emph{spinal Hanoi towers group} if each generator $a_i$ has precisely one
non-trivial section, equal to $a_i$ itself and placed at the $i$-th coordinate,
and acts at the root through a permutation $\sigma_i\in\Sym(d)$ that fixes $i$, see Figure~\ref{figura2} for the portrait of a generic generator $a_i$, and Figure~\ref{figura1} for the portraits of the generators of a concrete spinal Hanoi towers group.

\begin{figure}[H]
\begin{center}
\includegraphics[scale=0.9]{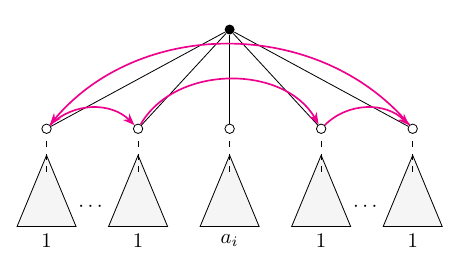}
\caption{Portrait of a generator $a_i$ with
$\psi(a_i)=(1,\dots,1,a_i,1,\dots,1)\sigma_i$, where
$\sigma_i\in\Sym(d)$ fixes $i$ and acts on the remaining
coordinates as indicated by the arrows.}
\label{figura2}
\end{center}
\end{figure}

\begin{figure}[H]
\begin{center}
\includegraphics[scale=0.8]{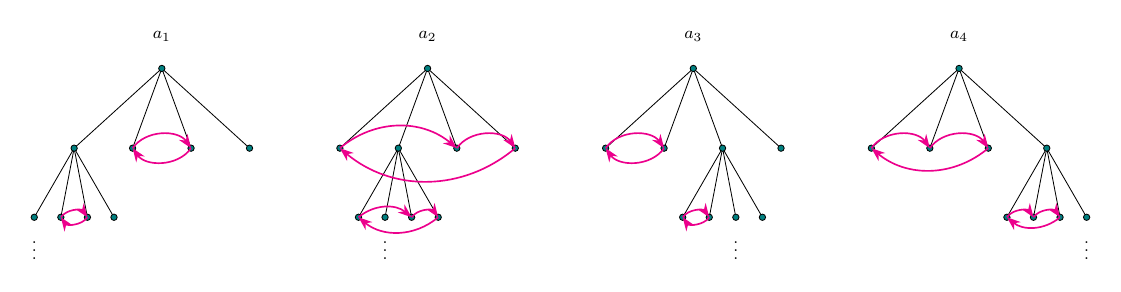}
\caption{Portraits of the generators of \(G = \langle a_1, a_2, a_3, a_4 \rangle\), where \(\psi(a_{1})=(a_{1},1,1,1)(2\ 3),\psi(a_2)=(1, a_{2},1,1)(1\ 3\ 4),\psi(a_3)=(1, 1,a_{3},1)(1\ 2),\psi(a_4)=(1, 1,1,a_{4})(1\ 2\ 3)\).}
\label{figura1}
\end{center}
\end{figure}

In this paper we present a unified framework for this family of ``Hanoi towers
group-like'' constructions. Each generator retains the rigid spinal shape
responsible for self-similarity, while the root permutations $\sigma_i$ are
arbitrary subject only to $\sigma_i(i)=i$. All the freedom of the construction is
therefore concentrated in the permutation group
$P=\langle\sigma_1,\dots,\sigma_d\rangle\leq\Sym(d)$, which makes the family
considerably larger than the one in \cite{Rachel} while still containing both the
classical Hanoi towers group $\mathcal{H}^{(3)}$ and Skipper's groups
$\mathcal{H}^{(d)}$ as extreme cases.

The generators of a spinal Hanoi towers group are directed automorphisms in the spirit of GGS-groups and of the spinal groups of Bartholdi, Grigorchuk and \v{S}uni\'c \cite{BGS03}. The distinctive feature of our family is that the directed and rooted parts are put into a single generator for each coordinate, which is precisely what makes the correspondence with the permutation group $P$ so tight.

This point of view sets up a dictionary between the theory of groups acting on
rooted trees and the theory of finite permutation groups: transitivity of $P$
characterizes fractality and level transitivity of $G$ (\Cref{fractandtrans}), the
abelianization of $P$ controls the first level stabilizer, and, when the branching subgroup $J$ introduced below is contained in $G$, perfectness of $P$
yields layered structure.

Within this family we isolate a natural subclass, the \emph{spinal Hanoi towers
groups of type $(d,m)$}, denoted $G^{(d,m)}$, in which $d\geq m+1$ and each root
permutation is the $m$-cycle $(i{+}1\ \ i{+}2\ \ \cdots\ \ i{+}m)$, with indices
taken modulo $d$. See Figure \ref{figura3} for an example of a generator of a spinal Hanoi towers group of type $(5,3)$. The two extreme cases are familiar: $m=d-1$ recovers Skipper's
groups, and $(d,m)=(3,2)$ recovers the original Hanoi towers group
$\mathcal{H}^{(3)}$.

\begin{figure}[h!]
\begin{center}
\includegraphics[scale=0.85]{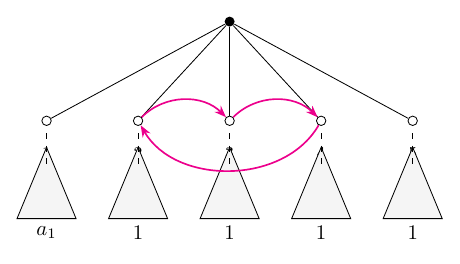}
\caption{Example of a portrait of the generator $a_1$ of a spinal Hanoi towers group of type $(5,3)$, where $\psi(a_1)=(a_1,1,1,1,1)(2 \ 3 \ 4)$.}
\label{figura3}
\end{center}
\end{figure}

The first part of the paper develops the theory for an arbitrary spinal Hanoi towers group $G$.  We start by showing that $G$ is fractal and level transitive
precisely when $P$ acts transitively on $\{1,\dots,d\}$
(\Cref{fractandtrans}). The recursive
structure also yields good algorithmic control: every spinal Hanoi towers group is contracting (\Cref{Proposition: Contraction}). The key observation is that, if one measures an element by its syllable length with respect to the powers of the generators, then passing to first level sections strictly shortens it. As a consequence, we obtain a  recursive procedure that solves the word problem (\Cref{alg:word-problem}). We stress that decidability of the word problem follows from the general theory of contracting automaton groups. On the other hand,  the content of our approach is the explicit length-halving bound on the syllable length, which is also what yields the contracting property with an explicit nucleus. Finally, we identify the abelian invariants of $G$.
The total exponent with which each generator occurs gives a homomorphism
$\varepsilon\colon G\to\prod_{i=1}^d\mathbb{Z}/o(a_i)\mathbb{Z}$ inducing an
isomorphism $G/G'\cong\prod_{i=1}^d\mathbb{Z}/o(a_i)\mathbb{Z}$
(\Cref{epsilonmap}), and this exponent map is compatible with the self-similar
structure, in the sense that $\varepsilon(g)=\sum_{j=1}^d\varepsilon(g|_j)$ for
every $g\in\St_G(1)$ (\Cref{cor:epsilon-sections}). This compatibility is what
makes the exponent map a useful tool that will be used throughout the rest of the paper. As a first application we combine it with the action on the first level to describe the
first level stabilizer: a tuple $(g_1,\dots,g_d)\in G\times\cdots\times G$ lies in
$\psi(\St_G(1))$ if  its total exponent lies in the kernel of an
explicit homomorphism
$\lambda\colon\prod_{i=1}^d\mathbb{Z}/o(a_i)\mathbb{Z}\to P/P'$ (\Cref{prop:necessary-condition-first-stabilizer} and
\Cref{prop:sufficient-condition-first-stabilizer}).\\

For the reader’s convenience, we record these results together in the following theorem.
\begin{thmx}\label{thmA}
Let $G=\langle a_1,\dots,a_d\rangle$ be a spinal Hanoi towers group with associated permutation group $P=\langle\sigma_1,\dots,\sigma_d\rangle\leq\Sym(d)$. Then:
\begin{enumerate}
\item $G$ is fractal and level transitive if and only if $P$ is transitive on $\{1,\dots,d\}$;
\item $G$ is amenable and contracting, with nucleus $\{1\}\cup\{a_i^r\mid 1\leq i\leq d,\ 1\leq r<o(a_i)\}$, and the word problem in $G$ is solvable by an explicit recursion on the syllable length;
\item $G/G'\cong\prod_{i=1}^d\mathbb{Z}/o(a_i)\mathbb{Z}$;
\item $\psi(\St_G(1))
\subseteq
\left\{
(g_1,\ldots,g_d)\in G\times \stackrel{d}{\cdots}\times G
\ \middle|\
\sum_{j=1}^d \varepsilon(g_j)\in \ker(\lambda)
\right\}.$
\end{enumerate}
\end{thmx}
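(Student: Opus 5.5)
Theorem~\ref{thmA} collects four statements, and I would prove each one separately, working directly from the recursion $\psi(a_i)=(1,\dots,a_i,\dots,1)\sigma_i$.

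\emph{Part (1).} Level $1$ is the easy half. The image of $G$ in $\Sym(d)$ is exactly $P$, so transitivity of $P$ is necessary for level transitivity. For fractality I would compute the sections of the vertex stabilizer $\St_G(i)$ at $i$. Given $j$ and some $g$ with $g(j)=i$ (which exists by transitivity of $P$), the conjugate $g a_j g^{-1}$ fixes $i$, and its section at $i$ is $g|_j\,a_j\,(g|_j)^{-1}$. A cleaner route uses the power $a_i^{k}$, where $k$ is the order of $\sigma_i$: it lies in $\St_G(1)$ and has section $a_i^{k}$ at $i$. The obstacle is that this produces only a power of $a_i$, not $a_i$ itself. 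Instead I would directly use $a_i\in\St_G(i)$, whose section at $i$ is $a_i$. For $j\neq i$, choose $g$ with $g(i)=j$ and form $g^{-1}a_jg$, which fixes $i$ and has section $(g|_i)^{-1}a_j\,g|_i$ at $i$. Inducting on word length then shows that the sections at $i$ generate a group containing all $a_j$ up to conjugation by elements of the same group. That gives $\St_G(i)|_i=G$, i.e.\ fractality. A fractal group that is transitive on level $1$ is level transitive, by the standard induction. Conversely, fractal plus level transitive forces $P$ transitive, because the action on level $1$ is the action of $P$.

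\emph{Part (2).} I would define the syllable length $\ell(g)$ as the minimal $n$ with $g=a_{i_1}^{r_1}\cdots a_{i_n}^{r_n}$. For each syllable, $a_i^r$ has exactly one nontrivial section, $a_i^r$, and it sits at coordinate $i$. So in a product of $n$ syllables, the section at any vertex $x$ is a product of the syllable sections encountered along the path, each contributing at most one syllable. The key estimate is that consecutive syllables $a_i^r a_j^s$ with $i\neq j$ cannot both contribute to the same coordinate. The section of $a_i^r$ at $x$ is nontrivial only if $x=i$; since $\sigma_i$ fixes $i$, the image is again $i$, and the next syllable contributes only if that image equals $j\neq i$, which is impossible. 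Summing over coordinates then gives $\sum_x \ell(g|_x)\le n$. Together with the above, each section has length at most $\lceil n/2\rceil$, and strictly less than $n$ once $n\ge2$. This halving yields contraction, with nucleus $\{1\}\cup\{a_i^r\}$: iterating brings every section down to length $\le1$, and every length-$1$ element reproduces itself. The word problem is then solved by recursion: check the root permutation, then recurse on the sections. Amenability follows because the group is generated by bounded automata; each $a_i$ is directed, with activity growth $0$, so the Bartholdi--Kaimanovich--Nekrashevych / Amir--Angel--Virág theorem on bounded automata groups applies.

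\emph{Parts (3) and (4).} For (3), $\varepsilon$ exists trivially on the free product of cyclic groups $\ast_i\mathbb{Z}/o(a_i)$. The real content is showing that $\varepsilon$ is well defined on $G$, i.e.\ that every relator has zero exponent sum in each $a_i$ modulo $o(a_i)$. I expect this to be the main obstacle. I would prove it by induction on syllable length, via the identity $\varepsilon(g)=\sum_j\varepsilon(g|_j)$ computed on words: each syllable $a_i^r$ contributes its exponent exactly once, to its section at coordinate $i$. So if a word $w$ represents $1$, all its sections are trivial and of smaller length (by the halving in part (2)), so by induction they have exponent zero, and hence so does $w$. Length-$1$ words are handled by the definition of $o(a_i)$. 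Since $\varepsilon$ then surjects onto an abelian group that $G/G'$ maps onto as well, we obtain $G/G'\cong\prod_i\mathbb{Z}/o(a_i)\mathbb{Z}$.

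For (4), take $g\in\St_G(1)$. Then $\sum_j\varepsilon(g_j)=\varepsilon(g)$ by the compatibility above, and $\lambda$ (sending $e_i\mapsto\sigma_iP'$) satisfies $\lambda(\varepsilon(g))=\pi(g)P'=P'$, since the root permutation $\pi(g)$ is trivial. Hence $\sum_j\varepsilon(g_j)\in\ker\lambda$.
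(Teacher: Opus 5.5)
Parts (2)--(4) of your proposal follow the paper's proof closely. Consecutive syllables $a_i^r a_j^s$ with $i\neq j$ never contribute to the same coordinate, because $\sigma_i$ fixes $i$, and this gives both $\Syl(g|_x)\le\lceil \Syl(g)/2\rceil$ and $\sum_x \Syl(g|_x)\le \Syl(g)$. Proving that $\varepsilon$ is well defined by induction on the number of syllables, via $e_i(\gamma)=\sum_j e_i(\gamma|_j)$, is also the paper's argument, and so is the one-line argument for (4). One small point in (3): you only record that $G/G'$ maps onto $\prod_i\mathbb{Z}/o(a_i)\mathbb{Z}$. For the isomorphism you also need $\ker\varepsilon\le G'$. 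This holds because $G/G'$ is generated by the commuting cosets $a_iG'$, each of order dividing $o(a_i)$, so $\gamma G'=\prod_j a_j^{e_j(\gamma)}G'$.

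The genuine gap is in the fractality half of part (1). You know that $\psi_i(\st_G(i))$ contains $a_i$ and, for each $j$, the element $(g|_i)^{-1}a_j\,g|_i$ for some $g$ carrying $i$ to $j$. This does not give $a_j\in\psi_i(\st_G(i))$. The conjugator $g|_i$ is some element of $G$ over which you have no control, and a subgroup containing a conjugate of every generator can be proper. Your sentence ``inducting on word length then shows \dots{} up to conjugation by elements of the same group'' asserts exactly what has to be proved, without saying why the induction goes through.

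The missing idea, which is the core of the paper's proof of \Cref{fractandtrans}, is to choose $g$ with $g|_i=1$. Let $g$ be the product of the generators $a_k$ corresponding to a word in the $\sigma_k$ of minimal length sending $i$ to $j$. Suppose that at some step the current image of $i$ equalled the index $k$ of the letter being applied. Then that letter would fix it, since $\sigma_k(k)=k$, and could be deleted, contradicting minimality. So each $a_k$ is applied at a vertex other than $k$, where its section is trivial. Hence $g|_i=1$, and the conjugate of $a_j$ has section exactly $a_j$ at $i$. Your induction does work as an induction along such a non-stuttering path in the Schreier graph of $P$, with this observation as the inductive step.

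As an aside, your `cleaner route' via $a_i^{k}$ with $k=o(\sigma_i)$ is empty: $o(a_i)=o(\sigma_i)$ gives $a_i^{k}=1$.
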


\medskip
Throughout the paper, we denote by $J\leq \Aut(\T_d)$ the subgroup
generated by the elements of $\St(1)$ whose wreath recursion has exactly two
non-trivial coordinates: for some distinct $p,q\in\{1,\ldots,d\}$, these
coordinates are occupied by an element $h\in G$ and its inverse $h^{-1}$,
respectively.

The subgroup $J$ governs the branch structure of a spinal Hanoi towers
group. Although its generators are defined using elements of $G$, they are
a priori automorphisms of $\T_d$ and need not themselves belong to $G$.
The central question is therefore to determine when $J\leq G$. For a
level-transitive group $G$, \Cref{Transfer} gives a criterion for this
containment in terms of the image of $\St_G(1)$ in the finite group
\[
\left(\prod_{i=1}^d \mathbb{Z}/o(a_i)\mathbb{Z}\right)^d\rtimes P,
\]
and \Cref{alg:Jcriterion} turns this criterion into an explicit algorithm.
When $J\leq G$, the consequences are strong: $G$ is strongly fractal and
regular branch over $G'$
(\Cref{lemma:J-contains-derived-sections}); its first-level stabilizer is
described exactly by the exponent-sum obstruction
(\Cref{prop:sufficient-condition-first-stabilizer}); and its rigid
stabilizers admit an explicit description at every level
(\Cref{prop:rist-explicit}). This description yields finite abelianizations
of the rigid stabilizers, and Grigorchuk's criterion for just infiniteness
then implies that $G$ is just infinite
\cite{JI grigorchuk} (\Cref{thm:justinfinite}).

Again, for the reader’s convenience, we record the structural consequences of the condition $J\leq G$ together in the following theorem.

\begin{thmx}\label{thmB}
Let $G$ be a level transitive spinal Hanoi towers group and let $J$ be the subgroup defined above. If $J\leq G$, then:
\begin{enumerate}
    \item $G$ is strongly fractal;
    \item $G$ is  regular branch over $G'$;
    \item $\psi(\St_G(1))
=
\left\{
(g_1,\ldots,g_d)\in G\times \stackrel{d}{\cdots}\times G \ \middle| \sum_{j=1}^d \varepsilon(g_j)\in \ker(\lambda)
\right\};$
 \item $\psi_m(\Rist_G(m))=B\times\stackrel{d^m}{\cdots}\times B$ for every $m\geq 1$, where $B=\rho^{-1}(P')$ (see \Cref{therhomap} for the definition of $\rho$);
    \item $G$ is just infinite.
\end{enumerate}
Note that, for  a spinal Hanoi towers group $G$ that is regular branch over $G'$, the containment $J\leq G$ can be verified in the finite group $\bigl(\prod_{i=1}^d\mathbb{Z}/o(a_i)\mathbb{Z}\bigr)^d\rtimes P$.
\end{thmx}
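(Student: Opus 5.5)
Since Theorem~\ref{thmB} collects results proved later in the paper, the plan is to prove the items in order, each building on the previous, starting from the hypotheses that $G$ is level transitive and $J\leq G$. By Theorem~\ref{thmA}(1), $P$ is transitive and $G$ is fractal.

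\emph{Strong fractality.} For a fixed coordinate $p$ and any $h\in G$, choose $q\neq p$ and the element of $J$ with $h$ at $p$, $h^{-1}$ at $q$ and identity elsewhere. It lies in $\St_G(1)$ because $J\leq G$. Its section at $p$ is $h$, so $\St_G(1)|_p=G$ for every $p$.

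\emph{Regular branching over $G'$.} Given $h,k\in G$, take $x=(h,h^{-1},1,\dots)$ and $y=(k,1,k^{-1},1,\dots)$ in $J$, with suitable coordinate choices; this needs $d\geq 3$, and for $d=3$ one uses the two other coordinates. Then $[x,y]=([h,k],1,\dots,1)$, so $G'\times 1\times\cdots\times 1\subseteq\psi(G')$. Here I would check that $[x,y]$ really lies in $G'$, which holds because $x,y\in G$. Conjugating by elements of $G$ moves this factor to every coordinate, using transitivity on the first level together with the fact that the $G$-conjugates of the coordinate-$1$ copy of $G'$ are coordinate copies of $G'$ conjugated by sections, so they are again $G'$. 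Finite index of $G'$ follows from Theorem~\ref{thmA}(3), which gives a finite abelianization; the orders $o(a_i)$ are finite since each $a_i^{o(\sigma_i)}$ is a spinal element, so $G'$ has finite index.

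\emph{First level stabilizer.} One inclusion is Theorem~\ref{thmA}(4). For the reverse, take a tuple $(g_1,\dots,g_d)$ with $\sum_j\varepsilon(g_j)\in\ker\lambda$. Using $J$, I would move all $g_j$ into coordinate $1$ modulo $J$: indeed $(g_1,\dots,g_d)$ times elements $(g_j,\dots,g_j^{-1},\dots)$ reduces it to $(g,1,\dots,1)$ with $\varepsilon(g)=\sum_j\varepsilon(g_j)$ modulo $\varepsilon$ of $G'$-terms. Since $G'\times1\times\cdots\times1\subseteq\psi(G)$, only the class of $g$ in $G/G'$ matters. It then remains to exhibit, for every $e\in\ker\lambda$, an element of $\St_G(1)$ whose image is $(g,1,\dots,1)$ modulo $J$ and $G'^d$ with $\varepsilon(g)=e$. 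I would obtain these from products of generators whose root permutations multiply to $1$, using Corollary~\ref{cor:epsilon-sections} and the definition of $\lambda$ via $P/P'$.

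\emph{Rigid stabilizers.} The inclusion $\psi(\Rist_G(1))\supseteq B^d$ is proved by showing that $(b,1,\dots,1)$ lies in $\psi(\St_G(1))$ exactly when $\varepsilon(b)\in\ker\lambda$, which by item (3) is the condition $\rho(b)\in P'$. Transitivity then spreads this to all coordinates, and item (3) gives equality. Induction on $m$ uses $\Rist_G(m)$ being level-$1$ products of $\Rist$ of sections, together with $B\supseteq G'$ being again of this shape; I would check that $\Rist_B$ behaves the same as $\Rist_G$, since $B\geq G'$ and $G$ is branch over $G'$.

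\emph{Just infinite.} Apply Grigorchuk's criterion: a branch group is just infinite if and only if each $\Rist_G(m)$ has finite abelianization. By item (4), $\Rist_G(m)^{ab}$ is a quotient of $(B^{ab})^{d^m}$, and $B$ has finite index in $G$. Finiteness of $B^{ab}$ then follows because $B$ contains $G'$, which contains $G'\times\cdots\times G'$ and hence $B'\supseteq G''$-type pieces of finite index. This is the step I expect to be hardest. I would try to show $B'\supseteq\psi^{-1}(G'\times\cdots\times G')$, which has finite index by branching.

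The final remark, that the condition can be checked in the finite group, follows by reducing generators of $J$ modulo $G'\times\cdots\times G'\subseteq\psi(G)$. The image of $\psi(\St_G(1))$ in $(G/G')^d\rtimes P$ then decides containment, as in Proposition~\ref{Transfer}.
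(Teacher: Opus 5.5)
Your outline is sound, and items (1), (2), (4) and the closing remark follow the paper's arguments. Your $[x,y]$ is the paper's $[\delta_{p,q}(x^{-1}),\delta_{q,r}(y)]$, and the remark is \Cref{Transfer}.

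\textbf{Item (5) is not yet proved.} The justification you offer does not work. From $B\geq G'$ and $G'\times\cdots\times G'\leq\psi(G')$ you only get $\psi(B')\geq G''\times\cdots\times G''$, and nothing you have established controls $|G:G''|$. The step you flag as hardest in fact follows in one line from your own item (2):
\begin{itemize}
\item The elements $x,y$ there lie in $J\leq\St_G(1)=\ker\rho\leq B$, so $[x,y]\in J'\leq B'$.
\item Hence $G'\times\cdots\times G'\leq\psi(J')\leq\psi(B')$, i.e.\ $\psi^{-1}(G'\times\cdots\times G')\leq B'$.
\item This subgroup has index at most $|P|\,|G:G'|^d$ in $G$, because $\St_G(1)/\psi^{-1}(G'\times\cdots\times G')$ embeds in $(G/G')^d$.
\end{itemize}
So $|B:B'|<\infty$ directly. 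Since also $J\leq G'$, the same argument gives $|G:G''|<\infty$, which is what your heuristic needed.

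\textbf{Comparison with the paper.} With that fix, your route is shorter than the paper's in two places.

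For (5), the paper bounds $|G':B'|$ as $|G':JB'|\,|JB':B'|$. It uses $G'/J\cong P'$ from \Cref{lem:JeqGprimeSt} and the homomorphisms $\theta_{p,q}\colon G\to B/B'$ factoring through $G/G'$. But the input it uses there, $\delta_{p,q}(h)\in B'$ for $h\in G'$, is already the inclusion $\psi^{-1}(G'\times\cdots\times G')\leq B'$, and that inclusion finishes the job on its own.

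For (3), the paper goes through \Cref{lem:JeqGprimeSt} (stated there without proof) and injectivity of $(\varepsilon,\rho)$ on $G/J$. You instead reduce a tuple modulo $\psi(J)$ to $(g,1,\ldots,1)$ and absorb $G'\times 1\times\cdots\times 1\leq\psi(J')$, which is self-contained. You should still write out how $\ker\lambda$ is realised, rather than appeal to ``products of generators whose root permutations multiply to $1$''. For $e\in\ker\lambda$, pick $g$ with $\varepsilon(g)=e$. Then $\rho(g)\in P'=\rho(G')$, so some $c\in G'$ has $\rho(c)=\rho(g)$. Now $gc^{-1}\in\St_G(1)$ and $\varepsilon(gc^{-1})=e$, so by \Cref{cor:epsilon-sections} the product of its sections has exponent $e$.

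For (4), no comparison of $\Rist_B$ with $\Rist_G$ is needed. For $v=xw$ with $|w|\geq 1$ one has $\psi_x(\rist_G(v))=\rist_G(w)\cap B=\rist_G(w)$, since $\rist_G(w)\leq\St_G(1)\leq B$. The induction is then immediate.
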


\medskip
In the final part of the paper, we specialize to the spinal Hanoi towers
groups $G^{(d,m)}$ of type $(d,m)$. For every $d\geq 3$ and
$2\leq m\leq d-1$, we prove that $G^{(d,m)}$ is strongly fractal and
regular branch over its commutator subgroup
(\Cref{propstronglyfract}). We then determine exactly when the subgroup
$J$ is contained in $G^{(d,m)}$: this occurs if and only if
$d\geq 4$ (\Cref{IcontainedinG'}). In the same range, the general
criterion developed above implies that $G^{(d,m)}$ is just infinite;
indeed, $G^{(d,m)}$ is just infinite if and only if $d\geq 4$
(\Cref{thm:typem-just-infinite}).

The exceptional case $d=3$ necessarily corresponds to
$(d,m)=(3,2)$, and hence to the classical Hanoi towers group
$\mathcal{H}^{(3)}$. In this case $J$ is not contained in the group
and the group is not just infinite. Thus, for every $d\geq 4$, our
methods give a uniform description of the branch structure of all groups
of type $(d,m)$.\\

For the reader's convenience, we collect these conclusions in the
following theorem.
\begin{thmx}\label{thmC}
Let $d\geq 3$ and $2\leq m\leq d-1$, and let $G^{(d,m)}$ be the spinal Hanoi towers group of type $(d,m)$. Then $G^{(d,m)}$ is strongly fractal and regular branch over its commutator subgroup.
Moreover, the additional properties of $J\leq G^{(d,m)}$  and $G^{(d,m)}$ being just infinite hold if and only if $d\geq 4$.

\end{thmx}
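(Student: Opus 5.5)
The proof runs in three stages, mirroring the structure of Theorem~\ref{thmC}, and leans on the general machinery summarized in Theorems~\ref{thmA} and~\ref{thmB}.

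\emph{Level transitivity and strong fractality.} The permutation group $P=\langle\sigma_1,\dots,\sigma_d\rangle$ generated by the cycles $(i{+}1\ \cdots\ i{+}m)$ is transitive on $\{1,\dots,d\}$, because consecutive cycles overlap and together cover every index. Theorem~\ref{thmA}(1) then gives that $G=G^{(d,m)}$ is fractal and level transitive.

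For strong fractality I would compute directly. The section of $a_i$ at $i$ is $a_i$. Conjugating $a_j$ (or taking a suitable power) by elements whose root permutation moves $j$ to $i$ produces elements of $\St_G(1)$ whose section at each vertex is any prescribed generator. Hence $\psi_i(\St_G(1))=G$ for each $i$.

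\emph{Regular branch over $G'$.} The standard route is to show $\psi(G')\supseteq G'\times\cdots\times G'$ and that $G'$ has finite index (the latter holds by Theorem~\ref{thmA}(3)). It suffices to produce, for each pair of generators, an element of $G'$ whose image is $([a_k,a_l],1,\dots,1)$; then $G'$-conjugation and level transitivity propagate this to every coordinate. Concretely, I would take commutators $[a_k^{g},a_l^{h}]$ with $g,h$ chosen so that the spines of the two factors meet only at the first coordinate. The $m$-cycle root permutations fix $d-m\ge1$ points, which leaves room to separate the supports. For $d=3$ this recovers the known computation for $\mathcal H^{(3)}$.

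\emph{The criterion $J\le G$ if and only if $d\ge4$, and just infiniteness.} For $d\ge4$ I would use the finite criterion (\Cref{Transfer}). It is enough to show that for one pair $p\ne q$ and every generator $a_k$, the element $\psi^{-1}(\dots,a_k,\dots,a_k^{-1},\dots)$ lies in $G$. Modulo $\psi^{-1}(G'\times\cdots\times G')\le G$, this reduces to a statement about exponent vectors and root permutations in $(\prod\mathbb Z/o(a_i))^d\rtimes P$. I expect $o(a_i)=\operatorname{lcm}$-type orders, essentially $m$, to appear. I would realize the required vectors by products $a_k^{m}\cdot(a_k^{m})^{g}$-type elements: $a_k^m\in\St(1)$ has $a_k^m$ at coordinate $k$. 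Adjusting by $\lambda$-compatible corrections uses that $P$ has enough room when $d\ge4$; for instance, $P$ contains both odd and even elements, or is perfect/alternating, so that $\lambda$ imposes no obstruction.

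For $d=3$, I would exhibit an invariant, such as a parity or exponent-sum map (the $\lambda$ obstruction with $P=\Sym(3)$, $P/P'=\mathbb Z/2$), that vanishes on $\psi(\St_G(1))$ but not on the $J$-generator $(a_1,a_1^{-1},1)$. Failure of just infiniteness for $\mathcal H^{(3)}$ is known: it has an infinite abelian quotient of a finite index subgroup, equivalently infinite abelianization of the rigid stabilizer. Finally, for $d\ge4$, Theorem~\ref{thmB}(5) gives just infiniteness.

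The main obstacle is the explicit verification of $J\le G$ for all $d\ge4$ and all $m$ uniformly. This requires a careful choice of conjugators so that the exponent-sum and root-permutation conditions in the finite quotient are met. I would first treat $m=d-1$ and small $m$ to find a pattern, then generalize using the cyclic symmetry $i\mapsto i+1$, which is an automorphism of $G^{(d,m)}$.
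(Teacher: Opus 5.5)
Your skeleton (transitivity of $P$, then strong fractality and branching, then \Cref{Transfer} and \Cref{thm:justinfinite}) matches the paper's. However, every step that requires producing non-trivial elements of $\St_G(1)$ with controlled sections fails as written, and that is where the content of the theorem lies.

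\textbf{Strong fractality.} A conjugate $a_j^g$ has root permutation $\sigma_j^{\rho(g)}\neq 1$, so it never lies in $\St_G(1)$. Since $o(a_j)=o(\sigma_j)=m$, a power $a_j^r$ lies in $\St(1)$ only when $a_j^r=1$. What your argument actually yields is an element of the vertex stabilizer $\st_G(i)$ with section $a_j$ at $i$. That is fractality, which is already \Cref{fractandtrans}. Strong fractality needs an element of $\St_G(1)$ having $a_i$ as a section, and this depends genuinely on $(d,m)$. The paper computes $[a_i,a_{i-2}]$ for $m=2$, $d\geq 5$; the element $(a_ia_{i+1}a_{i+2}a_{i+1})^2(a_ia_{i-1})^2$ for $(d,m)=(4,2)$; and $y_i=[a_{i-1},a_i]^2$ for $3\leq m\leq d-2$. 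It cites the literature for $(3,2)$ and $m=d-1$.

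\textbf{Branching.} This step has the same defect. The element $[a_k^g,a_l^h]$ has root permutation $[\sigma_k^{\rho(g)},\sigma_l^{\rho(h)}]$, which you never arrange to be trivial. Two $m$-cycles fixing the common spine coordinate commute only if their supports are disjoint or one is a power of the other. Disjoint supports are impossible when $2m>d-1$, e.g.\ in Skipper's case or for $(4,2)$. Even when the permutations commute, you would need conjugators for which $a_k^g$ keeps a single non-trivial section, and conjugation does not preserve this in general. The paper avoids the issue by working inside $\St_G(1)$ from the outset. \Cref{strongfractimpliesbranch} takes elements of $\St_G(1)$ with a section $a_i$ and support of size $2$ or $3$ (those above, and $z_i=y_i(y_i^{a_{i-1}})^{-1}$). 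It then uses the $(d-2)$-transitivity of $\alt(d)\leq P$ to conjugate one of them so that the two supports meet in exactly one coordinate, where the commutator is $[a_i,a_j]$.

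\textbf{The subgroup $J$.} Your argument here has two concrete errors.
First, $a_k^m=1$, so your building block $a_k^m\cdot(a_k^m)^g$ is trivial.
Second, $\lambda$ cannot detect $J$ at all: for every generator $\delta_{p,q}(h)$ the total exponent of the sections is $\varepsilon(h)+\varepsilon(h^{-1})=0\in\ker\lambda$.

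Hence, for $d=3$, the parity invariant you propose also vanishes on $(a_1,a_1^{-1},1)$ and cannot prove $J\not\leq\mathcal H^{(3)}$. One needs coordinate-sensitive relations, like those displayed for $\Dfive$; the paper simply cites Skipper's Remark~3.20. For $d\geq 4$, the absence of a $\lambda$-obstruction proves nothing, because the relevant condition $D\leq\Psi(\St_G(1))$ of \Cref{Transfer} is strictly finer. It fails for $\mathcal H^{(3)}$ and $\Dfive$, even though $\lambda$ imposes nothing on $J$ there either.

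What must be shown, and what your proposal leaves open, is that for each $i$ some element of $\St_G(1)$ is congruent to $(1,\dots,a_i,\dots,a_i^{-1},\dots,1)$ modulo $G'\times\cdots\times G'$. The paper gets this from \Cref{lem:sections-m2} and \Cref{lem:smallsupport42} when $m=2$, and from Skipper when $m=d-1$. Otherwise it multiplies two conjugates of $y_i\equiv(a_i,a_i^{-1},a_i^{-1},a_i)$ (on coordinates $i,i+1,i+2,i+m$) so that all but two entries cancel. This uses $5$-transitivity of $P$ when $d\geq 7$ or $m$ is even, and the $5$-cycle $(i{-}1\ i\ i{+}1\ i{+}3\ i{+}2)$ when $m=3$, $d\in\{5,6\}$. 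The $2$-transitivity of $P$ and \Cref{Transfer} then give $J\leq G$. Without this, neither $J\leq G^{(d,m)}$ nor just infiniteness for $d\geq 4$ is established.
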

Our results suggest that the containment $J\leq G$ may in fact characterize just infiniteness within the family (see Question~\ref{q:general}). A positive answer would identify the Hanoihedral group of Garrido and \v{S}uni\'c \cite{GarridoSunic2025} as a new natural example of a Hanoi-type group whose branch structure falls short of just infiniteness.

\medskip

\medskip

\noindent\textbf{Organization of the paper.}
In \Cref{sec:preliminaries} we collect the prerequisite notions on groups
acting on rooted trees and recall some classical examples. In
\Cref{sec:definition} we introduce spinal Hanoi towers groups, discuss their
relation with the constructions of Skipper \cite{Rachel} and
Makisumi-Stadnyk-Steinhurst \cite{modified}, and establish their basic
properties, stated in \Cref{thmA}. In
\Cref{sectbranch} we study the first level stabilizer of a spinal Hanoi towers
group and obtain necessary and sufficient conditions for a tuple to belong to it. Section \ref{secjustinf} is devoted to proving that level transitive spinal Hanoi towers groups with $J\le G$ are just infinite; in particular, we conclude the proof of \Cref{thmB}. Finally, in \Cref{sec:examples}, we specialize to the groups of type~$(d,m)$, and we complete the proof of \Cref{thmC}.

\medskip 

\subsection*{Acknowledgements} The authors thank J. Fari\~{n}a-Asategui and G. A. Fernández-Alcober for useful discussions.

\section{Preliminaries}\label{sec:preliminaries}

Let $X$ be a finite alphabet of cardinality $d \geq 2$. The \emph{$d$-regular rooted tree} $\T_d$ is the tree whose vertex set is the set $X^*$ of all finite words over $X$ (including the empty word $\emptyset$, which is the root), and where a vertex $v$ is connected by an edge to $vx$ for each $x \in X$. The \emph{$n$-th level} of the tree is the set $X^n$ of words of length $n$. The \emph{boundary} of the tree, denoted $\partial \T_d$, is the set $X^\omega$ of right-infinite sequences over $X$, endowed with the product topology.

The group $\Aut(\T_d)$ of automorphisms of $\T_d$ consists of all  automorphisms that fix the root. Every automorphism preserves the level structure of the tree. The group $\Aut(\T_d)$ admits a natural recursive description via the \emph{wreath product decomposition}
\[
  \Aut(\T_d) \cong \Aut(\T_d) \wr \Sym(X) = \Aut(\T_d)^d \rtimes \Sym(X).
\]

The wreath product decomposition above is realized by the isomorphism
\[
  \psi\colon \Aut(\T_d) \longrightarrow  \Aut(\T_d)\wr \Sym(X)=\Aut(\T_d)^d\rtimes \Sym(X),
\]
which sends an automorphism $g\in \Aut(\T_d)$ to the tuple of its first level sections together with its root permutation, that is
\[
  \psi(g)=(g|_{x_1},g|_{x_2},\ldots,g|_{x_d}) \sigma,
\]
where $X=\{x_1,\ldots,x_d\}$ and $\sigma\in \Sym(X)$ is the root permutation of $g$. For each $x\in X$, the projection
\[
  \psi_x\colon \st(x) \longrightarrow  \Aut(\T_d),\qquad g \longmapsto  g|_x,
\]
restricted to the stabilizer of the vertex $x$, is a group homomorphism. For a self-similar subgroup $G\leq \Aut(\T_d)$, the wreath recursion closes within $G$ and yields the embedding
\[
  \psi\colon G \hookrightarrow  G\wr \Sym(X),
\]
which we shall use throughout the paper without further mention.

In other words, every automorphism $g \in \Aut(\T_d)$ can be written as
\[
  \psi(g) = (g_1, g_2, \ldots, g_d)\sigma,
\]
where $\sigma \in \Sym(X)$ is the \emph{root permutation} (or \emph{activity}) of $g$, describing how $g$ permutes the top-level subtrees, and $g_x \in \Aut(\T_d)$ is the \emph{section} (or \emph{state}) of $g$ at the vertex $x \in X$, describing the action of $g$ on the subtree rooted at $x$. The action on a word $xw$, with $x \in X$ and $w \in X^*$, is given by
\[
  g(xw) = \sigma(x)  g_x(w).
\]
More generally, the section of $g$ at a vertex $v = x_1 x_2 \cdots x_n$ is defined recursively by $g|_v = (g|_{x_1 \cdots x_{n-1}})|_{x_n}$, with $g|_\emptyset = g$.
Throughout the paper, automorphisms act on the right and composition is read from  left to right. We write $u^g$ for the image of a vertex $u$ under $g$, so that $u^{gh}=(u^g)^h$, and accordingly $(gh)|_u=g|_u\, h|_{u^g}$. All displayed computations below follow this convention.
We will use the following formula for the section of a conjugate without further reference. For $f,g\in  \Aut(\T_d)$ and a vertex $u$, we have
\begin{equation}
    (f^{g})|_{u}=(g|_{(u)^{g^{-1}}})^{-1}f|_{(u)^{g^{-1}}}g|_{(u)^{g^{-1}f}}.
\end{equation}

\begin{definition}
  The \emph{stabilizer} of the $n$-th level is the normal subgroup
  \[
    \St(n) = \{ g \in \Aut(\T_d) : g(v) = v \text{ for all } v \in X^n \}.
  \]
  For a subgroup $G \leq \Aut(\T_d)$, we write $\St_G(n) = G \cap \St(n)$.
\end{definition}

\begin{definition}
 The \emph{rigid stabilizer} of a vertex $v$ is the subgroup
   \[
     \rist(v) = \{ g \in \Aut(\T_d) : g(w) = w \text{ for all } w \notin v X^* \},
  \]
   consisting of all automorphisms that act trivially outside the subtree rooted at $v$. The \emph{rigid stabilizer of the $n$-th level} is
   \[
     \Rist(n) = \prod_{v \in X^n} \rist(v).
   \]
   In $\Aut(\T_d)$ we have $\St(n)=\Rist(n)$ for every $n$. On the other hand, for a subgroup $G \leq \Aut(\T_d)$, we set $\rist_G(v) = G \cap \rist(v)$ and $\Rist_G(n) = \prod_{v \in X^n} \rist_G(v)$.
 \end{definition}

\begin{definition}
  A subgroup $G \leq \Aut(\T_d)$ is \emph{self-similar} if for every $g \in G$ and every $x \in X$, the section $g|_x$ also belongs to $G$. %Equivalently, $G$ is self-similar if the wreath recursion closes within $G$.
 \end{definition}

\begin{definition}\label{defstabvertex}
Let $G\leq \Aut(\T_d)$ be self-similar. We say that $G$ is \emph{fractal} if $\psi_{x}(\st_{G}(x))=G$ for every vertex $x$, where $\st_G(x)$ denotes the stabilizer of $x$ in $G$, and that $G$ is \emph{strongly fractal} if $\psi_{x}(\St_{G}(1))=G$ for every $x\in X$. 
\end{definition}

\begin{definition}
  A self-similar group $G \leq \Aut(\T_d)$ is \emph{contracting} if there exists a finite set $\mathcal{N} \subset G$ such that for every $g \in G$ there exists $n_0$ with the property that $g|_v \in \mathcal{N}$ for all $v$ with $|v| \geq n_0$. The smallest such set $\mathcal{N}$ is called the \emph{nucleus} of $G$.
\end{definition}

\begin{definition}
   Let $G \leq \Aut(\T_d)$. We say that $G$ is \emph{level transitive} if it acts transitively on each level.
\end{definition}

\begin{definition}
A subgroup $G \leq \Aut(\T_d)$ acting level transitively is called:
  \begin{itemize}
    \item \emph{branch} if $\Rist_G(n)$ has finite index in $G$ for every $n \geq 0$;
    \item \emph{weakly branch} if $\Rist_G(n)$ is non-trivial for every $n \geq 0$.
  \end{itemize}
\end{definition}

\begin{definition}
  A group $G \leq \Aut(\T_d)$ is \emph{regular branch} over a subgroup $K \leq G$ if $G$ acts level transitively, $K$ has finite index in $G$, and $K \times \cdots \times K$ ($d$ copies) embeds into $K$ via the wreath recursion.
\end{definition}

\section{Definition of the group and first properties}\label{sec:definition}

In this section we define the class of spinal Hanoi towers groups and establish some of its main properties. We recall that the \emph{Hanoi towers group} $\mathcal{H}^{(3)}$, introduced by Grigorchuk and \v{S}uni\'{c}~\cite{GrigorchukSunik07}, acts on the ternary tree $\T_3$  and is generated by three automorphisms $a_1,a_2,a_3$ where 
\begin{align*}
\psi(a_1)&=(a_1,1,1)(2\ 3),\\
\psi(a_2)&=(1,a_2,1)(1\ 3),\\
\psi(a_3)&=(1,1,a_3)(1\ 2).
\end{align*}
  The name derives from its connection to the classical \emph{Tower of Hanoi puzzle} on three pegs: the Schreier graph of the action of $\mathcal{H}^{(3)}$ on $X^n$ is isomorphic to the state graph of the Hanoi puzzle with $n$ disks. The group $\mathcal{H}^{(3)}$ is self-similar, contracting, and regular branch over its commutator subgroup. 

A first generalization to $d$-adic trees, for $d\geq 3$, is due to Skipper~\cite{Rachel}, who introduced the group $\mathcal{H}^{(d)}=\langle a_1,\dots,a_d\rangle\leq \Aut(\T_d)$, where each $a_i$ has a single non-trivial section, equal to $a_i$ itself, placed at the $i$-th coordinate, and root permutation
\[
\sigma_i=(i+1\   i+2\ \, \cdots\ \, i+d-1)\in \Sym(d),
\]
i.e.\ the $(d-1)$-cycle on $\{1,\dots,d\}\setminus\{i\}$ (indices mod $d$). She studied the branch structure, the congruence subgroup property and the Hausdorff dimension of these groups; for $d=3$ one recovers the classical Hanoi towers group $\mathcal{H}^{(3)}$. In~\cite{modified}, Makisumi, Stadnyk and Steinhurst introduced a different generalization in which a generator $a$ may have more than one non-trivial section (each equal to $a$); see~\cite[Definition~4.1]{modified}. They studied the contracting property and obtained necessary and sufficient conditions for such groups to be contracting~\cite[Proposition~4.4, Theorem~4.5]{modified}. Modifying instead the wreath recursion in a Basilica-like fashion leads to the family of $p$-Basilica groups of~\cite{p-Basilica}.

In this paper, we let $\T_d$ be the $d$-ary tree. For $i\in \{1,\dots, d\}$, consider the elements
\begin{align*}
\psi(a_i)=(1,\stackrel{i-1}{\dots},1,a_i,1,\dots, 1)\sigma_i,
\end{align*}
of the group $\Aut (\T_d)$,  where $\sigma_i$ is an element of the symmetric group $\Sym(d)$ that fixes $i$. We refer to any group generated by elements $a_1,\dots, a_d$ as a spinal Hanoi towers group. 
We assume throughout that $\sigma_{i}\neq 1$ for every $i$, in order to exclude the degenerate case $a_{i}=1$.

Of course, the group $\mathcal{H}^{(3)}$ and, more generally, the groups $\mathcal{H}^{(d)}$ of Skipper~\cite{Rachel} are examples of spinal Hanoi towers groups.

We now characterize when a spinal Hanoi towers group is fractal and level transitive.

\begin{proposition}\label{fractandtrans} Let $G$ be a spinal Hanoi towers group acting on the $d$-ary tree $\T_d$. Then $G$ is fractal and level transitive if and only if $\langle\sigma_1,\dots,\sigma_d\rangle$ acts transitively on the set $X=\{1,2,\dots,d\}$.
\end{proposition}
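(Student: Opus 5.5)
The forward direction is immediate: if $G$ is level transitive, then in particular $G$ is transitive on the first level $X$. The action of $G$ on $X$ factors through the root permutations, and the image of $G$ in $\Sym(X)$ is exactly $P=\langle\sigma_1,\dots,\sigma_d\rangle$. Hence $P$ is transitive.

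For the converse, assume $P$ is transitive. I will use the standard fact that a self-similar group which is transitive on the first level and fractal is level transitive. This is proved by induction on $n$: given $xu$ and $yv$ in $X^{n+1}$, first choose $g\in G$ with $x^g=y$. Then fractality gives some $h\in\st_G(y)$ whose section $h|_y$ sends $u^{g|_x}$ to $v$, using the induction hypothesis. The product $gh$ then maps $xu$ to $yv$. So it suffices to prove fractality. Since first-level transitivity and $\psi$ let us pass between vertices, it is enough to show $\psi_x(\st_G(x))=G$ for each vertex $x\in X$ of the first level. Fractality at deeper vertices then follows by iterating, because sections of vertex stabilizers compose: $\st_G(xw)$ contains $\st_G(x)\cap$ (preimage of $\st_G(w)$ under $\psi_x$).

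The key step is to show that for each first-level vertex $x$ and each generator $a_j$, there is an element of $\st_G(x)$ whose section at $x$ equals $a_j$. Since $P$ is transitive, choose $g\in G$ with $x^g=j$; write $g$ as a word in the generators. Then consider $f=g\,a_j\,g^{-1}$, which fixes $x$ because $a_j$ fixes $j$. By the section formula,
\[
f|_x=g|_x\,a_j|_j\,(g^{-1})|_{j}=g|_x\,a_j\,(g|_x)^{-1}.
\]
This yields only a conjugate of $a_j$ by $g|_x$, so I need to control $g|_x$. The trick is to choose the word for $g$ so that the path of $x$ never passes through the active coordinate of the letter being applied. Concretely, each letter $a_k$ with current vertex $y\neq k$ contributes trivial section, while if $y=k$ then $a_k$ fixes $y$ and is useless for moving. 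So along a path in the orbit graph of $P$ (edges $y\to y^{\sigma_k}$ with $y^{\sigma_k}\neq y$, which forces $y\neq k$), we obtain $g$ with $x^g=j$ and $g|_x=1$. Transitivity of $P$ means the graph with these nontrivial edges is connected (in the directed sense, since $P$ is finite, and inverses of $\sigma_k$ are powers of $\sigma_k$, so positive words suffice). Hence $f|_x=a_j$, and also $f\in\st_G(x)$.

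Because $\psi_x$ restricted to $\st_G(x)$ is a homomorphism, its image contains all $a_j$ and therefore equals $G$. The main obstacle I expect is precisely this bookkeeping of sections along the path. I need to verify, with the right-action convention $(gh)|_u=g|_u\,h|_{u^g}$, that $(g^{-1})|_j=(g|_x)^{-1}$ and that each traversed letter has trivial section at the current vertex. This is where the condition $\sigma_k(k)=k$ is essential.
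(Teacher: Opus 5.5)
Your proposal is correct and takes essentially the same approach as the paper. The paper also conjugates $a_i$ by a positive word $b$ in the generators whose root permutation carries $i$ to the target vertex. It uses minimality of that word to get $b|_i=1$, so the section is exactly $a_i$; minimality plays the same role as your condition that every letter actually moves the current point, which keeps the current vertex off the letter's active coordinate. The only difference is that the paper cites a lemma from the literature to reduce everything to $\psi_1(\st_G(1))=G$, whereas you prove by hand that first-level transitivity plus fractality at the first-level vertices gives fractality at all vertices and level transitivity.
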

\begin{proof}
If $P:=\langle \sigma_1,\ldots,\sigma_d\rangle$ is not transitive on
$X=\{1,\ldots,d\}$, then $G$ is not transitive on the first level, hence
it is not level transitive.

Conversely, assume that $P$ is transitive, that is $G$ is transitive on the first level. 
By \cite[Lemma 2.7]{fractality}, it is
enough to prove that $\psi_1(\st_G(1))=G$, where $\st_{G}({1})$ is the stabilizer of the vertex 1 and not the stabilizer of the first level. Since
$\sigma_1(1)=1$ and $a_1|_1=a_1$, we have
$a_1\in \psi_1(\st_G(1))$.

Let now $i\neq 1$. Choose a word
\[
        \delta=\sigma_{i_1}\sigma_{i_2}\cdots \sigma_{i_m}
\]
of minimal length such that $\delta(i)=1$, and set
$b=a_{i_1}a_{i_2}\cdots a_{i_m}$. We claim that $a_i^b\in
\St_G(1)$ and $(a_i^b)|_1=a_i$.

Indeed, the root permutation of $a_i^b$ is $\sigma_i^\delta$, and hence
\[
        \sigma_i^\delta(1)
        =\delta^{-1}\sigma_i\delta(1)
        =\sigma_i\delta(i)
        =\delta(i)
        =1,
\]
because $\sigma_i$ fixes $i$. Thus $a_i^b$ fixes the vertex $1$, i.e. $a_i^b\in
\St_G(1)$. It remains to compute its section at $1$. Since $\delta^{-1}(1)=i$, we have
\[
        (a_i^b)|_1
        = (b|_i)^{-1} a_i|_i\, b|_i
        = (b|_i)^{-1} a_i b|_i .
\]
We show that $b|_i=1$. For $j=1,\ldots,m$, put
\[
        x_j=(\sigma_{i_1}\cdots\sigma_{i_{j-1}})(i),
\]
with $x_1=i$. If $x_j=i_j$ for some $j$, then $\sigma_{i_j}$ fixes
$x_j$, and deleting the letter $\sigma_{i_j}$ from the word $\delta$
would still give a word sending $i$ to $1$, contradicting the minimality
of $\delta$. Hence $x_j\neq i_j$ for every $j$. But $a_{i_j}$ has
non-trivial section only at the vertex $i_j$, so
\[
        a_{i_j}|_{x_j}=1
\]
for every $j$. Therefore $b|_i=1$, and consequently
\[
        (a_i^b)|_1=a_i.
\]
Thus $a_i\in \psi_1(\St_G(1))$, which proves that the group is fractal and level transitive.
\end{proof}

\begin{remark}
	\Cref{fractandtrans} allows us to reduce many questions concerning a spinal Hanoi towers group $G$ to the case in which
	the group generated by the root permutations acts transitively. More
	precisely, suppose that $G$ acts on the $d$-adic tree $\T_d$ and that there is a partition of $X=\{1,\ldots,d\}$ given by,
	\[
	X=X_1\sqcup\cdots\sqcup X_r
	\]
	such that, whenever $i\in X_j$, the permutation $\sigma_i$ fixes
	$X\setminus X_j$ pointwise. For each $j$, set
	\[
	G_j=\langle a_i\mid i\in X_j\rangle
	\qquad\text{and}\qquad
	P_j=\langle \sigma_i|_{X_j}\mid i\in X_j\rangle
	\leq \Sym(X_j).
	\]
	Then $G_j$ fixes pointwise every first-level subtree whose root does
	not belong to $X_j$. Consequently, the subgroups $G_1,\ldots,G_r$
	commute, their supports are disjoint, and the multiplication map induces
	an isomorphism
	\[
	G\cong G_1\times\cdots\times G_r.
	\]
	Moreover, restriction to the invariant subtree $X_j^*$ identifies
	$G_j$ with the spinal Hanoi towers group acting on the
	$\lvert X_j\rvert$-ary tree whose root permutations are
	$\sigma_i|_{X_j}$, for $i\in X_j$. In particular, if $P_j$ acts
	transitively on $X_j$, then Proposition~3.1 shows that $G_j$ is
	fractal and level transitive.
	
	Thus, whenever the wreath recursions split into disjoint transitive
	blocks in this way, the resulting non-level transitive group introduces
	no new indecomposable example: it is simply a direct product of
	level transitive spinal Hanoi towers groups.
\end{remark}
The following example illustrates this situation with two blocks of cardinality $3$.
\begin{example}
	Let $G=\langle a_1,\ldots,a_6\rangle\leq\Aut(\T_6)$, where
	\[
	\begin{aligned}
		a_1&=(a_1,1,1,1,1,1)(23),&
		a_2&=(1,a_2,1,1,1,1)(13),&
		a_3&=(1,1,a_3,1,1,1)(12),\\
		a_4&=(1,1,1,a_4,1,1)(56),&
		a_5&=(1,1,1,1,a_5,1)(46),&
		a_6&=(1,1,1,1,1,a_6)(45).
	\end{aligned}
	\]
	Taking
	\[
	X_1=\{1,2,3\}
	\qquad\text{and}\qquad
	X_2=\{4,5,6\},
	\]
	the  remark above gives
	\[
	G
	=\langle a_1,a_2,a_3\rangle
	\times
	\langle a_4,a_5,a_6\rangle
	\cong \mathcal{H}^{(3)}\times \mathcal{H}^{(3)}.
	\]
	Indeed, after relabelling the elements of $X_2$, both factors are
	isomorphic to the classical Hanoi towers group $\mathcal{H}^{(3)}$.
\end{example}

\begin{remark}\label{remarkamenabke}
	Every generator $a_i=(1,\dots,a_i,\dots,1)\sigma_i$ has a single non-trivial
	section fixed by $\sigma_i$. Consequently, each
	$a_i$ is a bounded automorphism of $\T_d$, and every spinal Hanoi towers
	group is generated by bounded automata. By the amenability of bounded automata groups~\cite{BartholdiKaimanovichNekrashevych10}, every spinal Hanoi towers group
	is amenable. In particular this recovers the amenability of $\mathcal{H}^{(3)}$
	and of Skipper's groups $\mathcal{H}^{(d)}$.
\end{remark}

\subsection{Contracting property and the word problem}\label{sec:contracting}

In this section we prove that every spinal Hanoi towers group is contracting and provide a recursive algorithm solving the word problem. To this end, we note that apart from the classical generating set $\{a_1,\dots,a_d\}$ of a spinal Hanoi towers group $G$, another (non-minimal) generating set of $G$ is given by 
\[
\mathcal{S}:=\{ a_i^r \mid 1\le i\le d,\ 1\le r< o(a_i) \}\subseteq G, 
\]
where $o(x)$ denotes the order of an element $x$ of a group $G$, that is the smallest positive integer $k$ such that $x^k=1$. It is clear that $o(a_i)=o(\sigma_i)<\infty$, since $a_i^r=(1,\dots,1,a_i^r,1,\dots,1)\sigma_i^r$ by induction, and an automorphism with trivial root permutation whose unique non-trivial section equals the element itself is trivial.
We say a word $w$ over the alphabet $\mathcal{S}$ is of \emph{syllable form} if it has the form
\[
w  =  a_{i_1}^{ r_1} a_{i_2}^{ r_2}\cdots a_{i_k}^{ r_k},
\]
for some $k\in \mathbb{N}\cup\{0\}$ and some $a_{i_j}^{ r_j}\in \mathcal{S}$ with $i_j\neq i_{j+1}$ for $j\in\{1,\ldots,k-1\}$. The minimal integer $k$ among all such products evaluating  to a given element $g\in G$ is called the \emph{syllable length} of $g$ and is denoted $\Syl(g)$, and any corresponding expression is called a \emph{minimal syllable form of $g$}. Thus the syllable length is the (weighted) word length with respect to the generating set $\mathcal{S}$ with elements of $\mathcal{S}$, that we shall call \emph{syllables}, having weight 1. 
%Note that the corresponding distance function is a pseudo-metric on $G$. 

This remark will be useful for the next proposition.

\begin{remark}
Note that since the syllables are powers of generators, looking at the sections we have the same power of the same generator in exactly one section and 1 otherwise. 
\end{remark}

\begin{proposition}\label{Proposition: Contraction} Let $G$ be a spinal Hanoi towers group acting on the $d$-ary tree, and let $I=\{1, \dots, d\}$. 
\begin{enumerate}
    \item Let $g\in G$. Then, for every $i\in I$, $\Syl(g|_i) \leq \lceil \Syl(g)/2 \rceil$.
    \item Let $g\in G$. Then $\sum_{i\in I} \Syl(g|_i) \leq \Syl(g)$.
\end{enumerate}
\end{proposition}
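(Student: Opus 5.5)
Take a minimal syllable form $g=a_{i_1}^{r_1}a_{i_2}^{r_2}\cdots a_{i_k}^{r_k}$ with $k=\Syl(g)$, and compute every section $g|_i$ from the product rule $(gh)|_u=g|_u\,h|_{u^g}$. Put $u_1=i$ and $u_{j+1}=u_j^{\sigma_{i_j}^{r_j}}$. Then
\[
g|_i=a_{i_1}^{r_1}|_{u_1}\,a_{i_2}^{r_2}|_{u_2}\cdots a_{i_k}^{r_k}|_{u_k}.
\]
By the preceding remark, $a_{i_j}^{r_j}|_{u_j}$ equals $a_{i_j}^{r_j}$ when $u_j=i_j$ and equals $1$ otherwise. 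So $g|_i$ is the product of the syllables $a_{i_j}^{r_j}$ over the indices $j$ with $u_j=i_j$, taken in their original order. This gives a (possibly non-minimal) syllable expression for $g|_i$ whose length is the number of such "active" indices. Since $\Syl(g|_i)$ is at most the number of syllables in any expression for it, it suffices to count active indices.

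For part (1), fix $i$ and show that no two consecutive indices $j,j+1$ are both active. Suppose $u_j=i_j$. Then $\sigma_{i_j}$ fixes $i_j$, so $u_{j+1}=u_j^{\sigma_{i_j}^{r_j}}=i_j$. Since $i_{j+1}\neq i_j$ by the syllable form, $u_{j+1}\neq i_{j+1}$, and index $j+1$ is inactive. Hence the active indices form a subset of $\{1,\dots,k\}$ with no two consecutive elements. Such a subset has at most $\lceil k/2\rceil$ elements, which gives $\Syl(g|_i)\le\lceil \Syl(g)/2\rceil$.

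For part (2), swap the order of summation. For a fixed syllable position $j$, the map $i\mapsto u_j$ is a bijection of $I$, because it is given by the root permutation of the prefix $a_{i_1}^{r_1}\cdots a_{i_{j-1}}^{r_{j-1}}$. Therefore exactly one $i\in I$ satisfies $u_j=i_j$, so each syllable is active for exactly one coordinate. Summing over $i$, the total number of active pairs $(i,j)$ is exactly $k$. Hence $\sum_i\Syl(g|_i)\le k=\Syl(g)$.

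The only delicate points are bookkeeping. First, the right-action convention must be used correctly when tracking the vertices $u_j$. Second, the expression obtained for $g|_i$ need not be in syllable form, since adjacent surviving syllables could involve the same generator. This causes no problem: merging such syllables only shortens the expression, and we need only an upper bound. I expect the non-adjacency observation in part (1), which relies on $\sigma_{i_j}$ fixing $i_j$, to be the key step.
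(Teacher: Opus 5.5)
Your proof is correct and is essentially the paper's argument. The paper splits the word into $\lceil k/2\rceil$ consecutive blocks of at most two syllables and notes that every first-level section of such a block has syllable length at most $1$, because $\sigma_{i_j}$ fixes $i_j$ and $i_{j+1}\neq i_j$; this is exactly your non-adjacency of active indices, and part (2) is argued the same way (each syllable contributes to exactly one section), with your vertex-tracking simply making explicit the bookkeeping the paper leaves implicit.
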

\begin{proof}Let $g$ be an element of $G$. We first treat the cases of syllable length 1 and 2, then reduce the general case $\Syl(g) =k\geq 3$ by splitting the word into blocks of length at most 2. If $\Syl(g)=1$, then it follows that $g=a_i^r$ for some $i\in I$ and some $r\in \{1,\ldots, o(a_i)-1\}$. Then $\psi(g)=(1,\ldots,1,a_i^r,1,\ldots,1)\sigma_i^r$, and (i) is true. If $\Syl(g)=2$, then $g=a_{i_1}^{r_1}a_{i_2}^{r_2}$ for some $i_j\in I$ with $i_1\not=i_2$ and some $r_j\in \{1,\ldots, o(a_{i_j})-1\}$. Therefore 
\[
\psi(g)=(1,\ldots,1, a_{i_1}^{r_1},1,\ldots,1,a_{i_2}^{r_2},1,\ldots,1)\sigma_{i_1}^{r_1} \sigma_{i_2}^{r_2}
\]
where $a_{i_1}^{r_1}$ lies in the $i_1$-th coordinate and $a_{i_2}^{r_2}$ lies in the $i_2^{\sigma_{i_1}^{-r_1}}$-th coordinate. Note that $i_2^{\sigma_{i_1}^{-r_1}}\not=i_1$ since $\sigma_{i_1}$ fixes $i_1$ and $i_1\not=i_2$. So (i) is true. Now assume that $\Syl(g)=k\geq 3$. Then
\[
g=a_{i_1}^{ r_1} a_{i_2}^{ r_2}\cdots a_{i_k}^{ r_k}
\]
for $i_j\in I$ with $i_{j}\not=i_{j+1}$ for $j\in \{1,\ldots,k-1\}$ and some $r_j\in \{1,\ldots,o(a_{i_j})-1\}$. Note that $g$ can be written as a product $h_1\cdots h_{\lceil k/2 \rceil}$ where each $h_i$ has syllable length at most 2. Since
\[
\psi(h_i)=(h_i|_1,\ldots, h_i|_d)\tau_i
\]
for some $\tau_i\in \Sym(d)$ and, by the cases  treated above, $\Syl(h_i|_j)\leq 1$, it follows that for every $i\in \{1,\ldots,d\}$ the section $g|_i$ is a product of $\lceil k/2 \rceil$ syllables of length at most 1, therefore $\Syl(g|_i)\leq \lceil k/2 \rceil$.

Lastly, (ii) is a consequence of the fact that every syllable has exactly one non-trivial section and therefore contributes at most one syllable to exactly one of the sections, allowing only cancellations and no creation of new syllables. This concludes the proof.
\end{proof}

We now introduce some notation that will be used throughout the paper to keep track of
the exponents of the generators. This will be useful whenever we compare an element with
its sections, pass to quotients of $G$, or study the structure of the first level stabilizer.
For the moment, the notation is attached to a chosen syllable form expression; later we will
show that it is independent of this choice and hence defines a homomorphism on $G$.

Let $G=\langle a_1,\ldots,a_d\rangle$ be a spinal Hanoi towers group.
Given a syllable form expression
\[
\gamma  =  a_{i_1}^{r_1} a_{i_2}^{r_2}\cdots a_{i_k}^{r_k},
\]
we define, for each $i\in \{1,\ldots,d\}$,
\[
e_i(\gamma)
:=
\sum_{\substack{1\le t\le k\\ i_t=i}} r_t
\pmod {o(a_i)}
\in \mathbb Z/o(a_i)\mathbb Z.
\]
Thus $e_i(\gamma)$ is the total exponent with which the generator $a_i$ occurs in
the expression $\gamma$, taken modulo the order of $a_i$. We also define the exponent
vector of $\gamma$ by
\[
\varepsilon(\gamma)
:=
\bigl(e_1(\gamma),\ldots,e_d(\gamma)\bigr)
\in
\prod_{i=1}^d \mathbb Z/o(a_i)\mathbb Z.
\]
This notation is compatible with taking first level sections. Indeed, if
\[
\psi(\gamma)=(\gamma|_1,\ldots,\gamma|_d)\sigma,
\]
then, for every $i\in \{1,\ldots,d\}$,
\begin{equation}\label{Observation: sum of exponets over sections}
    e_i(\gamma)=\sum_{j=1}^d e_i(\gamma|_j).
\end{equation}
To see this, observe that each syllable $a_\ell^r$ has exactly one non-trivial first level section, namely $a_\ell^r$, and all its other sections are trivial. Therefore each occurrence of $a_i^r$ in $\gamma$ contributes $r$ to exactly one of the sections $\gamma|_j$, and contributes nothing to the others. Summing over all syllables gives the desired identity.

The previous identity shows that the exponent information of a word can be recovered by looking at the exponent information of its first level sections. This is useful because sections are shorter than the original word in the sense of syllable length, by
\Cref{Proposition: Contraction}. Thus one can test whether a word represents the identity by first checking its action on the first level and then recursively checking its sections. This gives the following algorithm for the word problem. 

\begin{algorithm}[H]
\caption{Word problem for $G$ via sections}\label{alg:word-problem}
\begin{algorithmic}[1]
\Require $G = \langle a_1, \dots, a_d \rangle$ a spinal Hanoi towers group; $\gamma = a_{i_1}^{ r_1}\cdots a_{i_k}^{ r_k}$ in syllable form ($i_j \neq i_{j+1}$)
\Ensure \textsc{True} if $\gamma = 1$ in $G$, \textsc{False} otherwise
\Procedure{IsIdentity}{$\gamma$}
  \State \textbf{case} $\Syl(\gamma) = 0$: \Return \textsc{True}
  \State \textbf{case} $\Syl(\gamma) = 1$, $\gamma = a_i^r$: \Return $r \equiv 0 \pmod{o(a_i)}$
  \State Compute $\psi(\gamma) = (\gamma|_1, \dots, \gamma|_d) \sigma \in (G\times\stackrel{d}{\cdots}\times G) \rtimes \Sym(d)$
  \State \textbf{if} $\sigma \neq 1$ \textbf{then} \Return \textsc{False}
  \State \Return $\bigwedge_{i=1}^{d}$ \Call{IsIdentity}{$\gamma|_i$}
\EndProcedure
\end{algorithmic}
\medskip
\end{algorithm}

Note that by \Cref{Proposition: Contraction}, $\Syl(\gamma|_i) \leq \lceil \Syl(\gamma)/2 \rceil < \Syl(\gamma)$ whenever $\Syl(\gamma) \geq 2$, so the algorithm terminates.

\begin{corollary}\label{cornucleo}Spinal Hanoi towers groups are contracting with nucleus
\[
\mathcal{N}\coloneqq\{1, a_i^r \mid 1\le i\le d,\ 1\le r\le o(a_i)-1\}.
\]
\end{corollary}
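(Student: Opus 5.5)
Two things need proving: every element of $G$ eventually has all its sections in $\mathcal N$, and $\mathcal N$ is the smallest set with this property. Both rest on \Cref{Proposition: Contraction}, which already contains the syllable-halving estimate.

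\textbf{Step 1: eventually in $\mathcal N$.} Let $g\in G$ with $\Syl(g)=k$. By part (i) of \Cref{Proposition: Contraction}, every first level section satisfies $\Syl(g|_i)\le\lceil k/2\rceil$. For $k\ge 2$ this is strictly less than $k$. Since $(g|_u)|_x=g|_{ux}$, I iterate this vertex by vertex. Put $f(k)=\lceil k/2\rceil$; its iterates reach $1$ after roughly $\lceil\log_2 k\rceil$ steps, and $f(1)=1$, $f(0)=0$. So there is $n_0$ with $\Syl(g|_v)\le 1$ for all $|v|\ge n_0$. An element of syllable length at most $1$ is either trivial or some $a_i^r$ with $1\le r<o(a_i)$, hence lies in $\mathcal N$. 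The set $\mathcal N$ is finite because $o(a_i)=o(\sigma_i)<\infty$. This proves that $G$ is contracting and that its nucleus is contained in $\mathcal N$.

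\textbf{Step 2: minimality.} I must show that every element of $\mathcal N$ actually lies in the nucleus. The nucleus is closed under taking sections and contains every element that occurs as a section of itself along arbitrarily long paths. Two computations suffice.
\begin{itemize}
\item Since $\psi(a_i^r)=(1,\dots,a_i^r,\dots,1)\sigma_i^r$, we get $(a_i^r)|_{i^n}=a_i^r$ for all $n$. So $a_i^r$ is a section of itself at every depth, and any set $\mathcal N'$ with the contracting property applied to $g=a_i^r$ must contain $a_i^r$.
\item Since $1|_v=1$ for every $v$, the same argument with $g=1$ forces $1\in\mathcal N'$.
\end{itemize}
Hence every admissible $\mathcal N'$ contains $\mathcal N$, and $\mathcal N$ is the nucleus.

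The only point needing care is that the definition of "smallest set" is applied to each single element $g$, which is exactly the situation above, so I expect no real obstacle. The main subtle point is Step 1's reliance on the bound $\lceil k/2\rceil<k$ for $k\ge2$, which is already provided by \Cref{Proposition: Contraction}.
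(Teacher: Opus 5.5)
Your proof is correct and follows the paper's argument: you iterate the halving bound of \Cref{Proposition: Contraction}(i) to bring all sections down to syllable length at most $1$, and you use $(a_i^r)|_{i^n}=a_i^r$ for minimality. Your explicit check that $1$ must lie in any admissible set, and your spelled-out iteration of $k\mapsto\lceil k/2\rceil$, are small details the paper leaves implicit.
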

\begin{proof}
By \Cref{Proposition: Contraction}(i), for every $g\in G$ there exists $n_0$ such that for all $|v|\geq n_0$, we have $\Syl(g|_v)\leq 1$, that is, $g|_v\in\mathcal{N}$. Hence the nucleus is contained in $\mathcal{N}$. Conversely, we note that  the section of $a_i^r$ at the vertex $i\stackrel{n}{\cdots}i$ equals $a_i^r$ for every $n\geq 0$. Therefore $\mathcal{N}$ is contained in the nucleus, and the equality follows.
\end{proof}

\begin{proposition}\label{epsilonmap}
	Let $G$ be a spinal Hanoi towers group. Then the map
	\begin{align*}
		\varepsilon: G&\longrightarrow \prod_{j=1}^d \mathbb Z/o(a_j)\mathbb Z,\\
		\gamma=a_{i_1}^{ r_1} a_{i_2}^{ r_2}\cdots a_{i_k}^{ r_k}&\longmapsto\bigl(e_1(\gamma), \ldots, e_d(\gamma)\bigr)
	\end{align*}
	is a well-defined surjective group homomorphism. Furthermore, $\ker(\varepsilon)=G'$ and 
	\[
	G/G'\cong \prod_{j=1}^d \mathbb Z/o(a_j)\mathbb Z.
	\]
\end{proposition}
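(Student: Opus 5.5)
The core difficulty is well-definedness: different words in the $a_i$ representing the same element must have the same exponent vector modulo $o(a_i)$. I will prove the equivalent statement that every word $w$ over $\{a_1,\dots,a_d\}$ with $w=1$ in $G$ has $\varepsilon(w)=0$. Once this is known, $\varepsilon$ is a homomorphism from the free group, and hence from $G$. The values lie in an abelian group, and each $a_j$ maps to the $j$-th standard generator, so $\varepsilon$ is surjective and $G'\le\ker\varepsilon$.

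For the vanishing claim I will argue by strong induction on the syllable length of the word, using the algorithm and \Cref{Proposition: Contraction}. First, rewrite $w$ in syllable form: merge adjacent powers of the same generator, reduce exponents mod $o(a_i)$, and delete trivial syllables. These operations change $e_i$ only by multiples of $o(a_i)$, so $\varepsilon(w)$ is unchanged. The base cases are $\Syl(w)=0$, which is trivial, and $\Syl(w)=1$, where $w=a_i^r=1$ forces $o(a_i)\mid r$.

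For $\Syl(w)=k\ge2$ with $w=1$, the root permutation is trivial and every section $w|_j$, computed formally as a word, represents $1$. By the syllable-count argument in the proof of \Cref{Proposition: Contraction}, each formal section word has at most $\lceil k/2\rceil<k$ syllables. This is a statement about the formal words, not about minimal lengths, so the induction applies to words. Hence $\varepsilon(w|_j)=0$ for all $j$, and identity \eqref{Observation: sum of exponets over sections} gives $e_i(w)=\sum_j e_i(w|_j)=0$. The main care needed is here: the identity must be applied to formal words, each syllable $a_\ell^r$ contributing $a_\ell^r$ to exactly one section word, and the induction must be on the length of the formal word rather than on $\Syl$ of the element. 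Both are already set up in the paper.

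Finally, for $\ker\varepsilon=G'$ it remains to show $\ker\varepsilon\le G'$. In $G/G'$ every element can be written as $\prod_j a_j^{n_j}G'$, and $a_j^{o(a_j)}=1$, so there is a surjection $\prod_j\mathbb{Z}/o(a_j)\mathbb{Z}\to G/G'$ sending $(n_j)\mapsto\prod_j a_j^{n_j}G'$. The induced map $\bar\varepsilon\colon G/G'\to\prod_j\mathbb{Z}/o(a_j)\mathbb{Z}$ composed with this surjection is the identity. Hence the surjection is injective, so both maps are isomorphisms and $\ker\varepsilon=G'$.
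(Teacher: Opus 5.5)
Your proposal is correct and follows essentially the same route as the paper: induction on the number of syllables of a word representing $1$, using the halving bound from the proof of \Cref{Proposition: Contraction} together with the identity $e_i(\gamma)=\sum_{j} e_i(\gamma|_j)$ to get well-definedness, and then collecting exponents in $G/G'$ to obtain $\ker(\varepsilon)\leq G'$. You induct on the length of the formal section words rather than on $\Syl$ of the element, which is $0$ for an element equal to $1$; this is what the paper's ``induction on $\Syl(\gamma)$'' implicitly means, and your version makes that step precise.
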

\begin{proof}
	Let $\gamma_1$ and $\gamma_2$ be two syllable forms that evaluate to the same element $g\in G$,
	and set $\gamma:=\gamma_1\gamma_2^{-1}$, so that $\gamma=1$ in $G$.
	We show, by induction on $\Syl(\gamma)$, that if a syllable form $\gamma$ evaluates to $1$ in $G$, then $e_i(\gamma)\equiv 0\pmod{o(a_i)}$ for every $i$. If $\Syl(\gamma)\leq 1$ this is clear. If $\Syl(\gamma)\geq 2$, by  $\gamma=1$,  we have $\gamma|_j=1$, and the syllable form of each section satisfies $\Syl(\gamma|_j)\leq\lceil\Syl(\gamma)/2\rceil<\Syl(\gamma)$ by \Cref{Proposition: Contraction}.
    By the induction hypothesis, $e_i(\gamma|_j)\equiv 0$ for all $i,j$, and Formula~\eqref{Observation: sum of exponets over sections} gives $e_i(\gamma)=\sum_{j} e_i(\gamma|_j)\equiv 0 \pmod{o(a_i)}$.
	Therefore $\varepsilon(\gamma_1)=\varepsilon(\gamma_2)$, and hence $\varepsilon$ is well defined.
	It is immediate from the definition that $\varepsilon$ is a homomorphism, and it is surjective
	because the images $\varepsilon(a_1),\dots,\varepsilon(a_d)$ generate
	$\prod_{j=1}^d \mathbb Z/o(a_j)\mathbb Z$.
    To conclude we show that $\ker(\varepsilon)=G'$. Since the codomain is abelian, the inclusion $G'\leq \ker(\varepsilon)$ is obvious.
    Conversely, let $\gamma\in \ker(\varepsilon)$ and write
    \[
\gamma=a_{i_1}^{r_1}a_{i_2}^{r_2}\cdots a_{i_k}^{r_k}.
   \]
   In the abelian quotient $G/G'$ we may reorder the factors and collect exponents, obtaining
   \[
   \gamma G'=\prod_{j=1}^d a_j^{e_j(\gamma)}G'.
   \]
   Since $\gamma\in \ker(\varepsilon)$, we have $e_j(\gamma)\equiv 0 \pmod{o(a_j)}$ for every
   $j\in\{1,\ldots,d\}$, and hence $a_j^{e_j(\gamma)}=1$ in $G$ for every $j$. Therefore
   $\gamma G'=G'$, i.e.\ $\gamma\in G'$. This proves $\ker(\varepsilon)=G'$, and,  by the first isomorphism theorem,
   \[
G/G'=G/\ker(\varepsilon)\cong\prod_{j=1}^d \mathbb Z/o(a_j)\mathbb Z.
   \]
\end{proof}

\begin{corollary}\label{cor:epsilon-sections}
Let $G$ be a spinal Hanoi towers group, and let $g\in \St_G(1)$ with $\psi(g)=(g_1,\ldots,g_d)$.
Then
\[
\varepsilon(g)=\sum_{j=1}^d \varepsilon(g_j).
\]
\end{corollary}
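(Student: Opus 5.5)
The identity follows from combining Formula~\eqref{Observation: sum of exponets over sections} with the well-definedness of $\varepsilon$ established in \Cref{epsilonmap}. The only subtlety is that Formula~\eqref{Observation: sum of exponets over sections} was proved at the level of syllable form expressions, not elements. We therefore first fix a representative and then pass to elements using \Cref{epsilonmap}.

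Let $g\in\St_G(1)$ and choose any syllable form expression $\gamma=a_{i_1}^{r_1}\cdots a_{i_k}^{r_k}$ evaluating to $g$. Computing sections syllable by syllable, each $a_{i_t}^{r_t}$ contributes the single syllable $a_{i_t}^{r_t}$ to exactly one first-level section and contributes trivially elsewhere. Hence each section $\gamma|_j$ comes with an explicit word $w_j$ in the syllables, namely the ordered product of the contributed syllables. The word $w_j$ need not be in reduced syllable form, since adjacent syllables may involve the same generator. Merging such adjacent syllables, and deleting those whose exponent becomes $0 \bmod o(a_i)$, does not change the exponent sums $e_i$ modulo $o(a_i)$. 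This is the bookkeeping behind Formula~\eqref{Observation: sum of exponets over sections}, which gives $e_i(\gamma)=\sum_j e_i(w_j)$ for every $i$.

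Since $g\in\St_G(1)$, the element represented by $w_j$ is exactly $g_j=g|_j$. By \Cref{epsilonmap}, $\varepsilon$ is a well-defined homomorphism on $G$, so the value $\varepsilon(w_j)$ computed from the word $w_j$ equals $\varepsilon(g_j)$. Likewise $\varepsilon(\gamma)=\varepsilon(g)$. Substituting into the previous identity gives $\varepsilon(g)=\sum_{j=1}^d\varepsilon(g_j)$.

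Everything here is routine. The one point needing care is that the section words may fail to be in reduced syllable form, so we must check that reducing them preserves exponent sums modulo the orders. It does, because $a_i^{o(a_i)}=1$ and adjacent powers of the same generator simply add their exponents.
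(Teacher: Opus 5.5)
Your proof is correct and follows essentially the same route as the paper's: apply the identity \eqref{Observation: sum of exponets over sections} to a chosen syllable form of $g$, then use the well-definedness of $\varepsilon$ from \Cref{epsilonmap} to pass from words to elements. Your extra check, that the section words need not be in syllable form and that merging adjacent powers of the same generator preserves exponent sums modulo $o(a_i)$, is a detail the paper leaves implicit.
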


\begin{proof}
Let $\gamma=a_{i_1}^{r_1}\cdots a_{i_k}^{r_k}$ be a syllable form evaluating to $g$. For each $i\in\{1,\ldots,d\}$, we have
already observed that
\[
e_i(\gamma)=\sum_{j=1}^d e_i(\gamma|_j).
\]
By \Cref{epsilonmap}, the maps $e_i$ and $\varepsilon$ are well defined on $G$, independently of the chosen syllable expression. Therefore the previous coordinatewise identities give $\varepsilon(g)=\sum_{j=1}^d \varepsilon(g_j)$.
\end{proof}

% \fc{The results established in this section provide the key ingredients for the proof of \Cref{thmA}. Indeed \Cref{fractandtrans} proves point $(i)$; \Cref{remarkamenabke}, \Cref{alg:word-problem} together with \Cref{cornucleo} prove point $(ii)$; and \Cref{epsilonmap} proves point $(iii)$.}

\section{Branch structure and the first level stabilizer}
\label{sectbranch}

In this section we study the first level stabilizer of a spinal Hanoi towers group. More precisely, we look for conditions describing which tuples
\[
(g_1,\ldots,g_d)\in G\times \stackrel{d}{\cdots}\times G
\]
belong to $\psi(\St_G(1))$.
Indeed, a more precise description of $\St_G(1)$ can help determine whether $G$ admits a larger or more natural branching subgroup, and in particular whether $G$ is layered.

The following key lemma provides a  consequence of strong fractality.

\begin{lemma}\label{strongfractimpliesbranch}
	Let $G$ be a strongly fractal spinal Hanoi towers group, with $d\geq 4$ and
	$\alt(d)\leq P$. Suppose that, for every $i\in\{1,\ldots,d\}$, there exists $x_i\in\St_G(1)$ such that:
	\begin{enumerate}
		\item some section of $x_i$ is equal to %$a_i^{\pm1}$;
        $a_{i}^{t}$, for some $t$ coprime with $o(a_{i})$;
        \item all the sections of $x_{i}$ commute;
		\item the set of coordinates at which $x_i$ has a non-trivial
		section, that is,\\  $S_i=\{t\in X:(x_i)|_t\neq 1\}$ satisfies  $2|S_i|-1\leq d$.
	\end{enumerate}
	Then $G$ is regular branch over $G'$.
\end{lemma}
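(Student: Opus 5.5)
The plan is to verify the three requirements in the definition of regular branch over $G'$: level transitivity, finite index of $G'$, and the containment $G'\times\cdots\times G'\subseteq\psi(G')$. The first two are quick. Since $d\geq 4$ and $\alt(d)\leq P$, the group $P$ is transitive on $X$, so $G$ is level transitive by \Cref{fractandtrans}. By \Cref{epsilonmap}, $G/G'\cong\prod_j\mathbb Z/o(a_j)\mathbb Z$ is finite. All the work is therefore in the containment. Because $G'$ is the normal closure in $G$ of the commutators $[a_i,a_j]$, and because $G$ is level transitive, it will suffice to show that
\[
(1,\ldots,1,[a_i,a_j]^w,1,\ldots,1)\in\psi(G')
\]
with the non-trivial entry at coordinate $1$, for all $i\neq j$ and all $w\in G$. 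Products of such elements then give $G'\times1\times\cdots\times1\subseteq\psi(G')$. Conjugating by elements of $G$ whose root permutation moves $1$ to a prescribed coordinate transports this factor to every other coordinate, since $\psi(G')$ is normalized by $\psi(G)$ and only one coordinate is non-trivial.

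To build these elements, first replace $x_i$ by a suitable power $x_i^k$ with $kt\equiv1\pmod{o(a_i)}$. Since $x_i\in\St_G(1)$, the sections of $x_i^k$ are the $k$-th powers of the sections of $x_i$, so one section is now exactly $a_i$ and the support is still $S_i$. Next conjugate $x_i$ by some $g\in G$ whose root permutation $\pi$ sends the distinguished coordinate to $1$. The conjugate $x_i^g$ lies in $\St_G(1)$, is supported on $\pi(S_i)$, and has section $u^{-1}a_iu$ at coordinate $1$, where $u$ is a section of $g$. Then conjugate further by an element $y\in\St_G(1)$ with $y|_1$ chosen freely; this is possible by strong fractality. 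That step adjusts the section at $1$ to $a_i^{w}$ for any prescribed $w\in G$, while the support stays equal to $\pi(S_i)$, since conjugating trivial sections keeps them trivial. Do the same for $x_j$ with a permutation $\pi'$, conjugator $w'$, and support $\pi'(S_j)$.

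The key combinatorial step, which I expect to be the main obstacle, is choosing $\pi,\pi'\in\alt(d)\leq P$ so that $\pi(S_i)\cap\pi'(S_j)=\{1\}$, with coordinate $1$ occupied in both cases by the distinguished section. This needs $|S_i|+|S_j|-1\leq d$, which follows from hypothesis (iii) applied to whichever of $S_i,S_j$ is larger. Since $\Sym(d)$ can clearly realize such a placement, the only issue is parity. My first attempt at fixing it is to compose with a transposition of two points lying both inside $\pi(S_i)\setminus\{1\}$ or both outside $\pi(S_i)\cup\pi'(S_j)$. The degenerate small cases, such as $|S_i|\leq2$ with little room left, need separate care; there I would use that $d\geq4$ leaves at least one further free point, or a transposition inside $\pi'(S_j)$. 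Any element of $P$ lifts to some $g\in G$, since $\psi$ maps $G$ onto a group whose root permutations form all of $P$.

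With such a placement, the commutator $[x_i^{gy},x_j^{g'y'}]\in G'$ lies in $\St_G(1)$, and its sections are computed coordinatewise. At every coordinate other than $1$, at least one of the two factors is trivial, so the commutator section there is trivial. At coordinate $1$ the section is $[a_i^{w},a_j^{w'}]$. Hypothesis (ii), that the sections of each $x_i$ commute, is where I would check that passing to powers and conjugates does not introduce unwanted terms; it also lets one take $w=w'$, yielding $[a_i,a_j]^w$ at coordinate $1$. As $i,j,w$ range, these elements generate $G'$ in coordinate $1$, which completes the argument above.
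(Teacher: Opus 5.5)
Your proof is correct and follows essentially the same route as the paper. You normalize the distinguished section to $a_i$ by a power, use $\alt(d)\leq P$ to place the two supports so they meet only at the distinguished coordinate, take the commutator, and spread $[a_i,a_j]$ and its conjugates to all of $G'$ in every coordinate via strong fractality and level transitivity. Your parity worry is settled in one line, as in the paper: (iii) with $d\geq 4$ gives $|S_j|\leq d-2$, so the $(d-2)$-transitivity of $\alt(d)$ realizes the placement directly (equivalently, there are always two points outside the placed copy of $S_j$ that can be swapped). Your appeal to (ii) is unnecessary, since powers and conjugates of elements of $\St_G(1)$ act coordinatewise and $w=w'$ is simply a free choice.
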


\begin{proof}
	It suffices to show that for every $i,j\in \{1,\ldots,d\}$, the group $G$ contains some element of the form 
	\[
	(1,\ldots,1,[a_i,a_j],1,\ldots,1)
	\in \psi\bigl(\St_{G'}(1)\bigr).
	\]
    Indeed, the result then  follows from the fact that $G$ is level transitive  (since $\alt(d)\leq P$) and strongly fractal.
	To that end,  fix $i,j\in \{1,\ldots,d\}$. For $k\in \{i,j\}$, replacing $x_k$ by the power $x_k^{s}$ with $ts\equiv 1\pmod{o(a_k)}$ if necessary, we may choose
	$q_k\in S_k$ as in $(i)$, such that $(x_k)|_{q_k}=a_k$. Since, by hypothesis (iii), for $d\geq 4$ we have $|S_i|+|S_j|-1\leq d$ and $|S_j|\leq d-2$, the $(d-2)$-transitivity of $\alt(d)$ yields $\tau\in P$ such that
	\[
	q_j^\tau=q_i
	\qquad\text{and}\qquad
	S_j^\tau\cap S_i=\{q_i\}.
	\]
	Choose $z_0\in G$ with root permutation $\tau$. By strong
	fractality, we may multiply $z_0$ by an element of
	$\St_G(1)$ and obtain $z\in G$ such that
	\[
	z|_{q_j}=1.
	\]
	Consequently, $x_j^z$ has support $S_j^\tau$ and
	\[
	(x_j^z)|_{q_i}=a_j.
	\]
	Therefore, by $(ii)$,
	\[
	\psi([x_i,x_j^z])=(1,\ldots,1, \underset{q_i}{[a_i,a_j]}, 1,\ldots,1),
	\]
	concluding the proof.
\end{proof}

We begin with a necessary condition coming from the abelianization of $G$. This condition will later serve as the main obstruction to deciding whether a tuple in $G\times \stackrel{d}{\cdots}\times G$ belongs to $\psi(\St_G(1))$.
By \Cref{epsilonmap}, the map
\[
\varepsilon:G\longrightarrow \prod_{i=1}^d \mathbb Z/o(a_i)\mathbb Z
\]
induces an isomorphism
\[
\overline{\varepsilon}:G/G'\longrightarrow
\prod_{i=1}^d \mathbb Z/o(a_i)\mathbb Z.
\]

On the other hand, the action of $G$ on the first level gives a surjective homomorphism onto $P=\langle\sigma_1,\ldots,\sigma_d\rangle$
\begin{align}\label{therhomap}
    \rho:G&\longrightarrow P\\
    g=\psi^{-1}((g_1,\ldots,g_d)\tau)&\longmapsto \tau
\end{align}
Since $\rho(G')\leq P'$, this induces a homomorphism
\begin{align*}
    \overline{\rho}:G/G'&\longrightarrow P/P'\\
    gG'&\longmapsto \rho(g)P'
\end{align*}
Using the isomorphism $\overline{\varepsilon}$, we obtain a homomorphism
\[
\lambda:
\prod_{i=1}^d \mathbb Z/o(a_i)\mathbb Z
\longrightarrow P/P'
\]
defined by
\[
\lambda(v)=\overline{\rho}\bigl(\overline{\varepsilon}^{-1}(v)\bigr).
\]
Thus $\lambda$ measures the obstruction, visible in the abelianization of $G$, for an exponent vector to come from an element acting trivially on the first level. The following proposition makes this observation precise.
\begin{proposition}\label{prop:necessary-condition-first-stabilizer}
Let $g\in \St_G(1)$ and write $\psi(g)=(g_1,\ldots,g_d)$.
Then $\sum_{j=1}^d \varepsilon(g_j)\in \ker(\lambda)$.
Consequently,
\[
\psi(\St_G(1))
\subseteq
\left\{
(g_1,\ldots,g_d)\in G\times \stackrel{d}{\cdots}\times G
\ \middle|\
\sum_{j=1}^d \varepsilon(g_j)\in \ker(\lambda)
\right\}.
\]
\end{proposition}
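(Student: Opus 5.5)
The argument is a direct diagram chase through the definitions of $\lambda$, $\overline{\varepsilon}$ and $\overline{\rho}$, combined with \Cref{cor:epsilon-sections}. Take $g\in\St_G(1)$ with $\psi(g)=(g_1,\ldots,g_d)$. By \Cref{cor:epsilon-sections}, $\sum_{j=1}^d\varepsilon(g_j)=\varepsilon(g)$. It therefore suffices to show that $\lambda(\varepsilon(g))$ is trivial in $P/P'$.

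Unwinding the definition, we have $\overline{\varepsilon}^{-1}(\varepsilon(g))=gG'$, since $\overline{\varepsilon}$ is the isomorphism $G/G'\to\prod_i\mathbb Z/o(a_i)\mathbb Z$ induced by $\varepsilon$ (\Cref{epsilonmap}). Hence
\[
\lambda(\varepsilon(g))=\overline{\rho}(gG')=\rho(g)P'.
\]
Because $g\in\St_G(1)$, its root permutation is trivial, so $\rho(g)=1$. Thus $\lambda(\varepsilon(g))=P'$, the identity of $P/P'$, and $\sum_j\varepsilon(g_j)\in\ker(\lambda)$.

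The displayed inclusion follows immediately, since every element of $\psi(\St_G(1))$ has this form with each $g_j\in G$ by self-similarity. The only point needing care, rather than being a genuine obstacle, is that $\lambda$ is well defined. This requires $\rho(G')\le P'$, which holds because $\rho$ is a homomorphism, and it requires that $\overline{\varepsilon}$ be an isomorphism, which is \Cref{epsilonmap}. With these two facts the argument is a one-line computation.
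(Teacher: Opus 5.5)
Your proposal is correct and follows the paper's proof: you rewrite $\sum_j\varepsilon(g_j)$ as $\varepsilon(g)$ via \Cref{cor:epsilon-sections}, then observe that $\lambda(\varepsilon(g))=\overline{\rho}(gG')=\rho(g)P'=P'$ because $g$ acts trivially on the first level. Your extra remark on why $\lambda$ is well defined ($\rho(G')\le P'$ and $\overline{\varepsilon}$ an isomorphism by \Cref{epsilonmap}) matches the paper's setup preceding the proposition.
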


\begin{proof}
Let $g\in \St_G(1)$ and write $\psi(g)=(g_1,\ldots,g_d)$. Since $g$ fixes the
first level, we have $\rho(g)=1$, and therefore $\overline{\rho}(gG')=P'$. Equivalently, $\lambda(\varepsilon(g))=P'$. By \Cref{cor:epsilon-sections} we have $\varepsilon(g)=\sum_{j=1}^d \varepsilon(g_j)$. Hence
\[
\lambda\left(\sum_{j=1}^d \varepsilon(g_j)\right)=P',
\]
which proves that
\[
\sum_{j=1}^d \varepsilon(g_j)\in \ker(\lambda).
\]
The stated inclusion follows immediately.
\end{proof}

We now introduce a key subgroup that will be used throughout this section to describe $\St_G(1)$. Roughly speaking, this subgroup allows us to compare different coordinates in the wreath recursion.
\begin{definition}\label{defofJ}
Let $G$ be a spinal Hanoi towers group. We denote by $J$ the subgroup generated by all
elements of $\St(1)$ whose wreath recursion is trivial in all but two
coordinates, where the two exceptional coordinates are occupied by some element
$h\in G$ and its inverse $h^{-1}$. More precisely,
\[
J =
\left\langle
g\in \St(1)
\ \middle|\
\psi(g)
=
(1,\ldots,1,h,1,\ldots,1,h^{-1},1,\ldots,1)
\text{ for some } h\in G
\right\rangle .
\]
Here the two entries $h$ and $h^{-1}$ are allowed to occur in any two distinct
coordinates.
\end{definition}

Note that although the elements used to define $J$ are described by a wreath recursion whose entries lie in $G$, they are a priori elements of $\Aut(\T_d)$, not necessarily elements of $G$.

Also note that if $J$ is contained in $G$ then $G$ is strongly fractal.
\begin{lemma}\label{lemma:J-contains-derived-sections}
Let $G$ be a spinal Hanoi towers group, and let $J$ be as in
\Cref{defofJ}. If $J\leq G$, then
\begin{enumerate}
    \item $G$ is strongly fractal. 
    \item $G'\times \stackrel{d}{\cdots}\times G' \leq \psi(J')$ and $J$ is a normal subgroup of $G$. In particular, if $G$ is level transitive, $G$ is regular branch over $G'$.
\end{enumerate}
\end{lemma}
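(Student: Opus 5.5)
For (i) I will show directly that $\psi_x(\St_G(1))=G$ for each $x\in X$. Fix $x$ and $h\in G$, and choose any $y\neq x$. The element $g\in\St(1)$ with $\psi(g)=(1,\ldots,1,h,1,\ldots,1,h^{-1},1,\ldots,1)$, with $h$ at coordinate $x$ and $h^{-1}$ at $y$, is one of the generators of $J$. Since $J\leq G$, this $g$ lies in $\St_G(1)$ and $g|_x=h$. So every $h\in G$ occurs as the $x$-section of an element of $\St_G(1)$, which is strong fractality. Self-similarity of $G$ is automatic, so nothing else is needed.

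For (ii), the first step is to produce the commutators coordinatewise. Fix $p\neq q$ and $h,k\in G$. Let $u,v\in J$ be the generators with $\psi(u)$ having $h$ at $p$ and $h^{-1}$ at $q$, and $\psi(v)$ having $k$ at $p$ and $k^{-1}$ at $q$. Then $\psi([u,v])$ has $[h,k]$ at $p$, $[h^{-1},k^{-1}]$ at $q$, and $1$ elsewhere. To remove the entry at $q$ when $d\geq3$, I would take $v$ with $k$ at $p$ and $k^{-1}$ at a third coordinate $r\notin\{p,q\}$ instead; the supports then meet only at $p$, so $\psi([u,v])=(1,\ldots,[h,k],\ldots,1)$ with the entry at $p$.

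The case $d=2$ needs separate care, since then $P=\langle\sigma_1,\sigma_2\rangle$ is trivial because each $\sigma_i$ fixes $i$. By the standing assumption $\sigma_i\neq1$, this case does not occur, so we may assume $d\geq 3$. The elements $[h,k]$ generate $G'$ as a group. Multiplying the single-coordinate commutators over varying $h,k$ and $p$ therefore gives $G'\times\cdots\times G'\leq\psi(J')$.

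The next step is normality of $J$ in $G$, which I will check on the generators. For $a\in G$ with $\psi(a)=(a_1',\ldots,a_d')\tau$ and a generator $g$ of $J$ supported at $p,q$, the conjugation formula shows that $g^a$ lies in $\St(1)$. Its only nontrivial sections are at $p^\tau$ and $q^\tau$, and they equal $(a'_p)^{-1}h\,a'_p$ and its inverse. Since $a'_p\in G$ by self-similarity, $g^a$ is again a generator of $J$. Hence $J^G=J$, and $J\leq G$ makes $J$ normal in $G$.

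The regular branch conclusion needs three ingredients. First, $G'$ has finite index in $G$, because $G/G'$ is finite by \Cref{epsilonmap}. Second, $\psi^{-1}(G'\times\cdots\times G')\leq J'\leq G'$, where the last inclusion uses $J\leq G$. Third, $G$ is level transitive by hypothesis. I expect the main obstacle to be the careful section bookkeeping in the normality computation and in the commutator with disjoint supports, including the left/right action conventions. The rest is formal.
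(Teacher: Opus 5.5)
Your arguments for (i), for the inclusion $G'\times\cdots\times G'\le\psi(J')$ and for the regular branch conclusion follow the paper's proof. For the inclusion you take a commutator of two generators whose supports meet in a single coordinate, and you correctly note that $d=2$ cannot occur because then $\sigma_1=\sigma_2=1$. The normality step, however, rests on a miscomputed section.

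Take $g\in J$ with $h$ at $p$ and $h^{-1}$ at $q$, and $a\in G$ with $\psi(a)=(a'_1,\ldots,a'_d)\tau$. The conjugation formula gives $(g^a)|_u=(a'_v)^{-1}\,g|_v\,a'_v$ with $v=u^{a^{-1}}$. So the section at $p^\tau$ is $(a'_p)^{-1}h\,a'_p$, while the section at $q^\tau$ is $(a'_q)^{-1}h^{-1}a'_q$. The second is conjugated by a \emph{different} section of $a$, and the two are not mutually inverse in general. For example, for $a=a_p$ one has $a'_p=a_p$ and $a'_q=1$, so the two sections are $h^{a_p}$ and $h^{-1}$; these are inverse only if $h$ commutes with $a_p$. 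Hence $g^a$ is typically not one of the defining generators of $J$, and your conclusion $J^G=J$ does not follow as written.

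The repair uses what you have already proved, and it is exactly what the paper does. Both $h^{a'_p}$ and $h^{a'_q}$ are conjugates of $h$ by elements of $G$ (self-similarity), so $h^{a'_p}(h^{a'_q})^{-1}\in G'$. Therefore $g^a$ is the product of two elements:
\begin{itemize}
\item the genuine generator of $J$ with $h^{a'_p}$ at $p^\tau$ and $(h^{a'_p})^{-1}$ at $q^\tau$;
\item an element of $\St(1)$ whose only nontrivial section, at $q^\tau$, is $h^{a'_p}(h^{a'_q})^{-1}\in G'$.
\end{itemize}
The second factor lies in $\psi^{-1}(G'\times\cdots\times G')\le J'\le J$ by your first step, so $g^a\in J$.

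The regular branch conclusion never uses normality of $J$, so that part of your proof is unaffected. Normality is, however, needed later in the paper, so this step has to be fixed.
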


\begin{proof}
	The first item follows directly from the definition of $J$. For the second item, let
	$p\neq q$ and $h\in G$, and write $\delta_{p,q}(h)$ for an element whose first level
	decomposition has $h$ in the $p$-th coordinate, $h^{-1}$ in the $q$-th coordinate, and
	trivial entries elsewhere. By definition, all such elements belong to $J$.
	
	We first show that $\psi(J')$ contains a copy of $G'$ in each coordinate. Fix $q$ and
	choose $p,r$ distinct from $q$ and from each other. For $x,y\in G$, we have
	\[
	[\delta_{p,q}(x^{-1}),\delta_{q,r}(y)]\in J'.
	\]
	The two elements have overlapping support only in the $q$-th coordinate, where their
	sections are $x$ and $y$, respectively. Hence
	\[
	\psi\bigl([\delta_{p,q}(x^{-1}),\delta_{q,r}(y)]\bigr)
	=
	(1,\ldots,1,[x,y],1,\ldots,1),
	\]
	where $[x,y]$ appears in the $q$-th coordinate. Since $x$ and $y$ are arbitrary, this
	shows that $\psi(J')$ contains the subgroup supported in the $q$-th coordinate with
	entries in $G'$. Varying $q$, we obtain
	\[
	G'\times \stackrel{d}{\cdots}\times G'\leq \psi(J').
	\]
	
	It remains to prove normality. Let $z\in G$, and write
	\[
	\psi(z)=(z_1,\ldots,z_d)\tau.
	\]
	It is enough to conjugate the generators $\delta_{p,q}(h)$ of $J$. The conjugate
	$\delta_{p,q}(h)^z$ again belongs to $\St(1)$, and its first level decomposition is
	trivial outside the two coordinates $\tau^{-1}(p)$ and $\tau^{-1}(q)$. In these two
	coordinates the entries are conjugates of $h$ and $h^{-1}$ by sections of $z$. Thus
	$\psi(\delta_{p,q}(h)^z)$ differs from
	$\psi(\delta_{\tau^{-1}(p),\tau^{-1}(q)}(h))$ by an element of
	$G'\times \stackrel{d}{\cdots}\times G'$, which is contained in $\psi(J')\leq \psi(J)$.
	Therefore $\delta_{p,q}(h)^z\in J$, and so $J$ is normal in $G$.
\end{proof}

We note that the wreath recursion gives the following natural homomorphism
\begin{align}\label{natural-wreath-recursion}
    \Psi:\psi(G)&\longrightarrow  \left(\left(\prod_{i=1}^d \mathbb Z/o(a_i)\mathbb Z\right)\times \stackrel{d}{\cdots}\times \left(\prod_{i=1}^d \mathbb Z/o(a_i)\mathbb Z\right)\right)\rtimes P,\\
    (g_1,\ldots,g_d)\tau &\longmapsto \bigl(\varepsilon(g_1),\ldots,\varepsilon(g_d)\bigr)\tau\nonumber
\end{align} 
whose kernel is contained in $G'\times \stackrel{d}{\cdots}\times G'$, in other words, $\ker\Psi\leq G'\times \stackrel{d}{\cdots}\times G'$. Hence, if $G$ is regular branch over $G'$ quotienting by $G'\times \stackrel{d}{\cdots}\times G'$ gives the following embedding
\begin{align}\label{Psihook}
    \psi(G)/\left(G'\times \stackrel{d}{\cdots}\times G'\right)\hookrightarrow  \left(\left(\prod_{i=1}^d \mathbb Z/o(a_i)\mathbb Z\right)\times \stackrel{d}{\cdots}\times \left(\prod_{i=1}^d \mathbb Z/o(a_i)\mathbb Z\right)\right)\rtimes P.
\end{align}
If, moreover, the subgroup $J$ is contained in $G$, then we can quotient further by the relations coming from $J$. These relations identify tuples which differ by moving an element from one coordinate to another with inverse contribution. That is, for every $(g_1,\ldots,g_d)\in G\times \stackrel{d}{\cdots}\times G$,
\[
(g_1,\ldots,g_d)\equiv  (1,\ldots,1,g_{\sigma(1)}g_{\sigma(2)}\cdots g_{\sigma(d)},1,\ldots,1)\pmod{\psi(J)},
\]
for every $\sigma\in \Sym(d)$. Therefore, modulo $J$, the individual abelianized sections $\varepsilon(g_1),\ldots,\varepsilon(g_d)$ are no longer seen separately; only their sum remains. Thus, the map in \Cref{natural-wreath-recursion} induces:
%\begin{align}\label{GJhook}
%    G/J\hookrightarrow \left(\prod_{i=1}^d\mathbb Z/o(a_i)\mathbb Z \right)\times P.
%\end{align}
\begin{align}\label{GJhookexplicit}
    (\varepsilon,\rho):G/J&\longrightarrow \left(\prod_{i=1}^d\mathbb Z/o(a_i)\mathbb Z \right)\times P\\
    gJ&\longmapsto (\varepsilon(g),\rho(g)).\nonumber
\end{align}

This map is well-defined because of the following: since every generator of $J$ lies in $\St_G(1)$ and has the form
\[
\psi(x)=(1,\ldots,1,h,1,\ldots,1,h^{-1},1,\ldots,1),
\]
we have $\rho(x)=1$. Moreover, by \Cref{cor:epsilon-sections},
\[
\varepsilon(x)=\varepsilon(h)+\varepsilon(h^{-1})=0.
\]
Hence $J\leq \ker(\varepsilon)\cap\ker\rho$, and therefore both $\varepsilon$ and $\rho$
factor through $G/J$. 

As a consequence of this discussion, we have the following two lemmas whose proof are straightforward.

\begin{lemma}\label{lem:JeqGprimeSt}
Let $G$ be a spinal Hanoi towers group with $J\leq G$. Then
\[
J=G'\cap \St_G(1).
\]
In particular, the map $(\varepsilon,\rho)$ defined above is a well-defined injective homomorphism.
\end{lemma}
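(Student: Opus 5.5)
We must prove $J=G'\cap\St_G(1)$ assuming $J\le G$. The plan is to prove the two inclusions separately, and then read off injectivity of $(\varepsilon,\rho)$ from the fact that $\ker\varepsilon=G'$ (\Cref{epsilonmap}) and $\ker\rho=\St_G(1)$.

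\emph{Inclusion $J\subseteq G'\cap\St_G(1)$.} This is essentially the well-definedness discussion preceding \eqref{GJhookexplicit}. Each generator $x$ of $J$ lies in $\St(1)$, and $x\in G$ by hypothesis, so $x\in\St_G(1)$. By \Cref{cor:epsilon-sections}, $\varepsilon(x)=\varepsilon(h)+\varepsilon(h^{-1})=0$, so $x\in\ker\varepsilon=G'$. Since $G'\cap\St_G(1)$ is a subgroup, it contains $J$.

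\emph{Inclusion $G'\cap\St_G(1)\subseteq J$.} Let $g\in G'\cap\St_G(1)$ with $\psi(g)=(g_1,\dots,g_d)$. Multiply $g$ on the right by the elements $\delta_{j,1}(g_j^{-1})$ for $j=2,\dots,d$; these lie in $J$ by \Cref{defofJ}. We must be careful with the order of products in coordinate $1$: each factor contributes $g_j$ there, multiplied on the right of $g_1$. The result is an element $g'\in gJ$ with
\[
\psi(g')=(g_1g_2\cdots g_d,1,\dots,1).
\]
We then need to show $g'\in J$. Since $g'\in G'$ (because $J\le G'$), \Cref{cor:epsilon-sections} gives $\varepsilon(g_1\cdots g_d)=\varepsilon(g')=0$, so $c:=g_1\cdots g_d\in G'$. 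By \Cref{lemma:J-contains-derived-sections}(ii), $(c,1,\dots,1)\in G'\times\cdots\times G'\le\psi(J')$. Because $\psi$ is injective on $\Aut(\T_d)$, this gives $g'\in J$, and hence $g\in J$.

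\emph{Injectivity.} The map $(\varepsilon,\rho)$ is well defined on $G/J$ because $J\le\ker\varepsilon\cap\ker\rho$. Its kernel is $(\ker\varepsilon\cap\ker\rho)/J=(G'\cap\St_G(1))/J$, which is trivial by the equality just proved.

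The main subtlety is the second step: we must use the exponent compatibility to get $c\in G'$, rather than only knowing $\sum_j\varepsilon(g_j)=0$ modulo $\ker\lambda$. This works here because $g$ is assumed to lie in $G'$, not merely in $\St_G(1)$. Everything else reduces to the coordinate-shifting relations supplied by $J$ and to the containment $G'\times\cdots\times G'\le\psi(J)$.
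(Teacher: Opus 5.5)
Your proof is correct and is essentially the argument the paper leaves as ``straightforward'' after the preceding discussion: you use the generators $\delta_{j,1}(g_j^{-1})$ of $J$ to collect all sections into one coordinate, apply \Cref{cor:epsilon-sections} to see that the collected product lies in $G'$, and absorb it using $G'\times\cdots\times G'\le\psi(J')$ from \Cref{lemma:J-contains-derived-sections}. Injectivity of $(\varepsilon,\rho)$ then follows as you say, because its kernel on $G$ is $\ker\varepsilon\cap\ker\rho=G'\cap\St_G(1)=J$.
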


\begin{lemma}Let $G$ be a spinal Hanoi towers group. If $G$ is regular branch over $G'$, then the quotient $\psi(G)/(G'\times \stackrel{d}{\cdots}\times G')$ is isomorphic to a subgroup of 
\[
 \left(\left(\prod_{i=1}^d \mathbb Z/o(a_i)\mathbb Z\right)\times \stackrel{d}{\cdots}\times \left(\prod_{i=1}^d \mathbb Z/o(a_i)\mathbb Z\right)\right)\rtimes P.
\]
If, moreover, the subgroup $J$ is contained in $G$, then
the quotient $G/J$ is isomorphic to a subgroup of
\[
 \left(\prod_{i=1}^d \mathbb Z/o(a_i)\mathbb Z\right)\times P.
\]
\end{lemma}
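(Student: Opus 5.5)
The plan is to prove the two parts in turn. Both come from the wreath-recursion homomorphism $\Psi$ of \eqref{natural-wreath-recursion} together with the facts already established for $J$.

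\emph{First statement.} Suppose $G$ is regular branch over $G'$. Then $G'\times\stackrel{d}{\cdots}\times G'\leq\psi(G')\leq\psi(G)$, so the quotient $\psi(G)/(G'\times\cdots\times G')$ makes sense. The map $\Psi$ is a homomorphism: on the coordinates it is $\varepsilon$ applied entrywise, the root permutations multiply in $P$, and permuting coordinates commutes with applying $\varepsilon$ entrywise. I would compute its kernel directly. An element $(g_1,\ldots,g_d)\tau$ lies in $\ker\Psi$ exactly when $\tau=1$ and $\varepsilon(g_j)=0$ for all $j$. By \Cref{epsilonmap}, $\ker\varepsilon=G'$, so this says each $g_j\in G'$. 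Hence $\ker\Psi=G'\times\cdots\times G'$, with equality rather than mere inclusion, precisely because of the branching hypothesis. The first isomorphism theorem then gives the embedding.

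\emph{Second statement.} Now assume in addition that $J\leq G$. The discussion before \Cref{lem:JeqGprimeSt} shows $J\leq\ker\varepsilon\cap\ker\rho$, so $(\varepsilon,\rho)$ factors through $G/J$. Also $J$ is normal by \Cref{lemma:J-contains-derived-sections}, so $G/J$ is a group. It remains to show injectivity, i.e.\ that $\ker\varepsilon\cap\ker\rho=G'\cap\St_G(1)$ is contained in $J$. This is the substance of \Cref{lem:JeqGprimeSt}, which I would prove as follows.

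Take $g\in G'\cap\St_G(1)$ with $\psi(g)=(g_1,\ldots,g_d)$. Multiply $g$ by the elements $\delta_{j,1}(g_j^{-1})\in J$ for $j\geq 2$ (notation as in the proof of \Cref{lemma:J-contains-derived-sections}). Each such element is in $G$ because $J\leq G$, and each moves the entry $g_j$ into coordinate $1$. The result is an element $g'=g\cdot\prod_j\delta_{j,1}(g_j^{-1})$ with $g'\in gJ$ and
\[
\psi(g')=(h,1,\ldots,1),\qquad h=g_1g_2\cdots g_d
\]
up to the order of the factors. Since $J\leq\ker\varepsilon$, \Cref{cor:epsilon-sections} gives $\varepsilon(h)=\varepsilon(g')=\varepsilon(g)=0$, so $h\in G'$. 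By \Cref{lemma:J-contains-derived-sections}, $G'\times\cdots\times G'\leq\psi(J')$, so $g'\in J$ and therefore $g\in J$.

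The main obstacle is bookkeeping: checking that the reordering of factors in $h$ is harmless, and that each $\delta$ element lies in $J$ and hence in $G$. Both follow because reordering changes $h$ only by an element of $G'$, and elements of $G'$ placed in a single coordinate already lie in $\psi(J)$. This gives an injective map from $G/J$ into $\bigl(\prod_{i=1}^d\mathbb Z/o(a_i)\mathbb Z\bigr)\times P$, which proves the second statement.
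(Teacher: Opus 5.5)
Your proof is correct and follows the paper's route. In the first part you identify $\ker\Psi$ with $G'\times\cdots\times G'$ under the branching hypothesis. In the second part the map $(\varepsilon,\rho)$ has kernel $G'\cap\St_G(1)$, and you show this kernel equals $J$ by collecting all sections into one coordinate with the elements $\delta_{j,1}(g_j^{-1})$, which is exactly the relation described in the paper's discussion; in doing so you supply the details of \Cref{lem:JeqGprimeSt}, which the paper only calls straightforward.
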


We are now ready to prove the main result of this section.

\begin{proposition}\label{prop:sufficient-condition-first-stabilizer}
Let $G$ be a spinal Hanoi towers group. If $J\leq G$ then 
\[
\psi(\St_G(1))
=
\left\{
(g_1,\ldots,g_d)\in G\times \stackrel{d}{\cdots}\times G \ \middle| \sum_{j=1}^d \varepsilon(g_j)\in \ker(\lambda)
\right\}.
\]
\end{proposition}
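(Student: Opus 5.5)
The inclusion ``$\subseteq$'' is \Cref{prop:necessary-condition-first-stabilizer}, so only ``$\supseteq$'' needs proof. Let $(g_1,\ldots,g_d)\in G\times\stackrel{d}{\cdots}\times G$ with $v:=\sum_j\varepsilon(g_j)\in\ker(\lambda)$. The plan has three steps: use $J$ to collapse the tuple onto a single coordinate, find an element of $\St_G(1)$ carrying that single entry, and then split the remaining discrepancy into a commutator part and a $J$ part.

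For the reduction, I use the generators $\delta_{p,q}(h)\in J\leq G$ (notation as in the proof of \Cref{lemma:J-contains-derived-sections}). Multiplying the tuple on the right by $\delta_{1,j}(g_j)^{-1}$ moves $g_j$ to coordinate $1$. Doing this for each $j\neq 1$ gives
\[
(g_1,\ldots,g_d)\equiv(g_1g_2\cdots g_d,1,\ldots,1)\pmod{\psi(J)}.
\]
Since $\psi(J)\subseteq\psi(\St_G(1))$ when $J\leq G$, it suffices to show that $(g,1,\ldots,1)\in\psi(\St_G(1))$, where $g:=g_1\cdots g_d$ satisfies $\varepsilon(g)=v\in\ker(\lambda)$.

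Next I produce an element of $\St_G(1)$ with the right exponent vector. Because $\lambda(v)=P'$, any $y\in G$ with $\varepsilon(y)=v$ has $\rho(y)\in P'$. Write $\rho(y)$ as a product of commutators $[\sigma,\tau]$ in $P$ and lift each one to a commutator $[x,z]\in G'$ with $\rho(x)=\sigma$ and $\rho(z)=\tau$. Since $\rho$ is surjective, this gives $c\in G'$ with $\rho(c)=\rho(y)$. Then $s:=yc^{-1}\in\St_G(1)$ and $\varepsilon(s)=v$. Writing $\psi(s)=(s_1,\ldots,s_d)$, \Cref{cor:epsilon-sections} gives $\sum_j\varepsilon(s_j)=v$. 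Applying the same $J$-collapse to $s$ yields $(s',1,\ldots,1)\in\psi(\St_G(1))$ with $s':=s_1\cdots s_d$ and $\varepsilon(s')=v=\varepsilon(g)$. Hence $g(s')^{-1}\in\ker(\varepsilon)=G'$ by \Cref{epsilonmap}.

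Finally, it remains to show that $(k,1,\ldots,1)\in\psi(\St_G(1))$ for every $k\in G'$. By \Cref{lemma:J-contains-derived-sections}, $G'\times\stackrel{d}{\cdots}\times G'\leq\psi(J')\leq\psi(J)\leq\psi(\St_G(1))$, which covers this. Multiplying $(k,1,\ldots,1)$ by $(s',1,\ldots,1)$ then gives $(g,1,\ldots,1)$, which completes the argument.

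The step I expect to need the most care is the second one: producing an element of $\St_G(1)$ with prescribed exponent vector $v$. It hinges on correcting $y$ by an element of $G'$ so as to realise any element of $P'$ as a root permutation, and that requires verifying $\rho(G')=P'$. This holds because $\rho$ is surjective and so maps $G'$ onto $P'$. The remaining bookkeeping (signs and order in the $J$-collapse, given the right action) is routine, since only the coordinatewise product modulo $G'$ and its $\varepsilon$-value matter.
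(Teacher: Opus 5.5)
Your proof is correct. Its first half is exactly the paper's first step: you realise each $v\in\ker(\lambda)$ as $\varepsilon(s)$ with $s\in\St_G(1)$, by correcting a preimage of $v$ with an element of $G'$ that has the same root permutation.

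The second half is organised differently. The paper invokes \Cref{lem:JeqGprimeSt}, namely $J=G'\cap\St_G(1)$, i.e.\ injectivity of $(\varepsilon,\rho)$ on $G/J$. From this it gets $\St_G(1)/J\cong\ker(\lambda)$ and then reads off the description of $\psi(\St_G(1))/\psi(J)$. You instead work with elements: you collapse any tuple modulo $\psi(J)$ onto one coordinate, and absorb the remaining discrepancy into $G'\times\cdots\times G'\le\psi(J)$. In effect you prove that every tuple $(g_1,\ldots,g_d)\in G\times\cdots\times G$ with $\sum_j\varepsilon(g_j)=0$ lies in $\psi(J)$. This is precisely what the paper's ``Hence'' tacitly relies on, since injectivity on $\St_G(1)/J$ by itself says nothing about tuples not yet known to lie in $\psi(\St_G(1))$. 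The paper only sketches this fact in the discussion preceding \Cref{lem:JeqGprimeSt}. Your route is therefore more self-contained, and it avoids \Cref{lem:JeqGprimeSt}, whose proof the paper omits as straightforward.

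One small slip: with $\delta_{p,q}(h)$ carrying $h$ at $p$ and $h^{-1}$ at $q$, it is right multiplication by $\delta_{1,j}(g_j)$, not by its inverse, that moves $g_j$ into coordinate $1$. Since both lie in $J$, nothing else changes.
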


\begin{proof}
First we prove that $\varepsilon(\St_G(1))=\ker(\lambda)$.
The inclusion $\varepsilon(\St_G(1))\subseteq \ker(\lambda)$ follows from \Cref{prop:necessary-condition-first-stabilizer}.

Conversely, let $v\in\ker(\lambda)$. Since $\varepsilon:G\to \prod_{i=1}^d \mathbb Z/o(a_i)\mathbb Z$
is surjective, choose $g\in G$ such that $\varepsilon(g)=v$. The condition $v\in\ker(\lambda)$ implies $\rho(g)\in P'$. Since $\rho:G\to P$ also surjective, we have $\rho(G')=P'$. Hence there exists $c\in G'$ such that $\rho(c)=\rho(g)$. Therefore $\rho(gc^{-1})=1$, so $gc^{-1}\in \St_G(1)$. Moreover, since $G'=\ker(\varepsilon)$, we have $\varepsilon(gc^{-1})=\varepsilon(g)=v$.
Thus $v\in \varepsilon(\St_G(1))$, and therefore $\varepsilon(\St_G(1))=\ker(\lambda)$. 

To conclude, recall that the map $(\varepsilon,\rho)$ in \Cref{GJhookexplicit} is injective by \Cref{lem:JeqGprimeSt}, and so also its restriction
\[
(\varepsilon,\rho)|_{\St_G(1)/J}:\St_G(1)/J\longrightarrow \left(\prod_{i=1}^d\mathbb Z/o(a_i)\mathbb Z \right)\times P.
\]
By also corestricting the codomain to the image and using that $\varepsilon(\St_G(1)/J)\cong\varepsilon(\St_G(1))=\ker(\lambda)$  we get the following isomorphism
\[
(\varepsilon,\rho)|_{\St_G(1)/J}:\St_G(1)/J\longrightarrow \ker(\lambda).
\]
Hence
\[
\psi(\St_G(1))/\psi(J)
=
\left\{
(g_1,\ldots,g_d)\psi(J)
\ \middle|\
(g_1,\ldots,g_d)\in G\times \stackrel{d}{\cdots}\times G,\ 
\sum_{j=1}^d \varepsilon(g_j)\in \ker(\lambda)
\right\}.
\]
Finally, since $\psi(J)\leq\psi(\St_G(1))\leq G\times \stackrel{d}{\cdots}\times G$ and $\varepsilon(J)=\{0\}$ it follows that
\[
\psi(\St_G(1))
=
\left\{
(g_1,\ldots,g_d) \ \middle| (g_1,\ldots,g_d)\in G\times \stackrel{d}{\cdots}\times G,\sum_{j=1}^d \varepsilon(g_j)\in \ker(\lambda)
\right\}.
\]

\end{proof}
\begin{corollary}\label{perfectcase}
	Let $G$ be a spinal Hanoi towers group such that $J\leq G$. If $P$ is perfect, then $G$ is layered, in other words
	\[
	\psi(\St_G(1))=G\times \stackrel{d}{\cdots}\times G.
	\]
\end{corollary}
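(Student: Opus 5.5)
The plan is to deduce the corollary directly from \Cref{prop:sufficient-condition-first-stabilizer}, which is available because $J\leq G$. That proposition describes $\psi(\St_G(1))$ as the set of tuples $(g_1,\ldots,g_d)\in G\times\stackrel{d}{\cdots}\times G$ with $\sum_{j=1}^d\varepsilon(g_j)\in\ker(\lambda)$. It therefore suffices to show that, when $P$ is perfect, this condition holds for every tuple. That reduces to proving $\ker(\lambda)=\prod_{i=1}^d\mathbb Z/o(a_i)\mathbb Z$.

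The key observation is that $\lambda$ maps into $P/P'$. If $P$ is perfect, then $P'=P$, so $P/P'$ is the trivial group and $\lambda$ is the trivial homomorphism. Its kernel is then the whole domain $\prod_{i=1}^d\mathbb Z/o(a_i)\mathbb Z$. In particular, for any $(g_1,\ldots,g_d)\in G\times\stackrel{d}{\cdots}\times G$, the sum $\sum_j\varepsilon(g_j)$ automatically lies in $\ker(\lambda)$.

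Substituting into the equality of \Cref{prop:sufficient-condition-first-stabilizer} gives $\psi(\St_G(1))=G\times\stackrel{d}{\cdots}\times G$, which is the layered property. The reverse inclusion $\psi(\St_G(1))\subseteq G\times\stackrel{d}{\cdots}\times G$ is immediate from self-similarity, so nothing beyond the proposition is needed.

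There is no real obstacle here. The only point worth checking is that \Cref{prop:sufficient-condition-first-stabilizer} is used with the hypothesis $J\leq G$ as stated, and that $\lambda$ is well defined: its construction only uses $\overline{\varepsilon}$ being an isomorphism, which is \Cref{epsilonmap}, and $\rho(G')\leq P'$.
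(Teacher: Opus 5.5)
Your proposal is correct and follows the paper's own proof exactly: perfectness of $P$ makes $P/P'$ trivial, so $\lambda$ is the trivial homomorphism with kernel all of $\prod_{i=1}^d\mathbb Z/o(a_i)\mathbb Z$. The equality from \Cref{prop:sufficient-condition-first-stabilizer} then gives $\psi(\St_G(1))=G\times\stackrel{d}{\cdots}\times G$.
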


\begin{proof}
	Since $P$ is perfect, we have $P/P'=1$. Hence the map $\lambda$ is trivial.
\end{proof}

\begin{corollary}\ml{\label{cor:symcase}}
	Let $G$ be a spinal Hanoi towers group such that $J\leq G$. Assume that $P=\Sym(d)$. Then
	\[
	\psi(\St_G(1))
	=
	\left\{
	(g_1,\ldots,g_d)\in G\times \stackrel{d}{\cdots}\times G
	\ \middle|\
	\sum_{\sigma_i\notin \alt(d)}\sum_{j=1}^d e_i(g_j)\equiv 0 \pmod 2
	\right\}.
	\]
\end{corollary}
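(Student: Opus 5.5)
This will be a direct specialization of \Cref{prop:sufficient-condition-first-stabilizer}: since $J\leq G$, we already know
\[
\psi(\St_G(1))=\left\{(g_1,\ldots,g_d)\ \middle|\ \textstyle\sum_j\varepsilon(g_j)\in\ker(\lambda)\right\}.
\]
It therefore suffices to compute $\lambda$ explicitly when $P=\Sym(d)$ and to show that, for $v=(v_1,\ldots,v_d)\in\prod_i\mathbb Z/o(a_i)\mathbb Z$, we have $v\in\ker(\lambda)$ if and only if $\sum_{\sigma_i\notin\alt(d)} v_i\equiv 0\pmod 2$. Substituting $v=\sum_j\varepsilon(g_j)$, whose $i$-th coordinate is $\sum_j e_i(g_j)$, then gives the displayed set.

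To compute $\lambda$, I will use that for $d\geq 2$ we have $P'=\alt(d)$, so $P/P'\cong\{\pm1\}$ via the sign map. By construction $\lambda(v)=\overline{\rho}(\overline{\varepsilon}^{-1}(v))$. Since $\overline{\varepsilon}^{-1}(v)$ is represented by $\prod_i a_i^{v_i}$ (well defined because $a_i$ has order $o(a_i)$), we get
\[
\lambda(v)=\prod_i \sigma_i^{v_i}P' \;\longleftrightarrow\; \prod_i \operatorname{sgn}(\sigma_i)^{v_i}.
\]
This is $+1$ exactly when the sum of the $v_i$ over those $i$ with $\sigma_i$ odd is even.

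The only point requiring care is that the parity of $v_i$ must be well defined modulo $o(a_i)$ when $\sigma_i$ is odd. Since $o(a_i)=o(\sigma_i)$ and an odd permutation has even order, $o(a_i)$ is even, so reduction mod $2$ is well defined. For $\sigma_i\in\alt(d)$, the factor $\operatorname{sgn}(\sigma_i)^{v_i}=1$ regardless of $v_i$, so those coordinates impose no condition. This is the main check; everything else is bookkeeping.
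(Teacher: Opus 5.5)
Your proposal is correct and follows the paper's proof: specialize \Cref{prop:sufficient-condition-first-stabilizer}, identify $P/P'=\Sym(d)/\alt(d)\cong\mathbb Z/2\mathbb Z$ via the sign, and note that $\lambda(\varepsilon(a_i))$ is the parity of $\sigma_i$. Your extra check, that odd $\sigma_i$ have even order so the parity of $v_i\in\mathbb Z/o(a_i)\mathbb Z$ is well defined, fills in a detail the paper leaves implicit.
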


\begin{proof}
	Since $P=\Sym(d)$, we have $P'=\alt(d)$ and $P/P'\cong \mathbb Z/2\mathbb Z$. Under this identification, the map $\lambda$ records the parity of the root permutation. Hence
	\[
	\lambda(\varepsilon(a_i))=
	\begin{cases}
		0, & \text{if } \sigma_i\in \alt(d),\\
		1, & \text{if } \sigma_i\notin \alt(d).
	\end{cases}
	\]
	Therefore an element $v=(r_1,\ldots,r_d)\in \prod_{i=1}^d \mathbb Z/o(a_i)\mathbb Z$ belongs to $\ker(\lambda)$ if and only if
	\[
	\sum_{\sigma_i\notin \alt(d)} r_i\equiv 0 \pmod 2.
	\]
\end{proof}

Using the same reasoning one can get the slightly more general result.
\begin{corollary}\label{cor:rem}
	Let $G$ be a spinal Hanoi towers group such that $J\leq G$. Assume that $P/P'\cong \mathbb Z/p\mathbb Z$ for some prime $p$. For any isomorphism $P/P'\to \mathbb Z/p\mathbb Z$ where $\sigma_iP'\mapsto c_i$, we have
		\[
	\psi(\St_G(1))
	=
	\left\{
	(g_1,\ldots,g_d)\in G\times \stackrel{d}{\cdots}\times G
	\ \middle|\
	\sum_{i=1}^d c_i\sum_{j=1}^d e_i(g_j)\equiv0 \pmod p
	\right\}.
	\]
\end{corollary}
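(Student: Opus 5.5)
The plan is to specialize \Cref{prop:sufficient-condition-first-stabilizer} by making the map $\lambda$ explicit, in the same way as in the proof of \Cref{cor:symcase}. Since $J\leq G$, that proposition gives
\[
\psi(\St_G(1))=\Bigl\{(g_1,\ldots,g_d)\ \Bigm|\ \textstyle\sum_{j}\varepsilon(g_j)\in\ker(\lambda)\Bigr\}.
\]
So the whole task is to identify $\ker(\lambda)$ inside $\prod_{i=1}^d\mathbb Z/o(a_i)\mathbb Z$, after composing $\lambda$ with the fixed isomorphism $\phi\colon P/P'\to\mathbb Z/p\mathbb Z$, $\sigma_iP'\mapsto c_i$.

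First I compute $\phi\circ\lambda$ on the standard generators $\varepsilon(a_i)$ of the product. By definition $\lambda(\varepsilon(a_i))=\overline{\rho}(a_iG')=\sigma_iP'$, so $\phi\lambda(\varepsilon(a_i))=c_i$. Since $\phi\circ\lambda$ is a homomorphism, a vector $v=(r_1,\ldots,r_d)=\sum_i r_i\varepsilon(a_i)$ is sent to $\sum_i c_ir_i \bmod p$.

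There is a subtle point here, and it is the only step needing care: the expression $\sum_i c_i r_i$ must be well defined modulo $p$ even though $r_i$ is only defined modulo $o(a_i)$. This holds because $o(a_i)=o(\sigma_i)$ and $\sigma_i^{o(a_i)}=1$. Hence $o(a_i)c_i\equiv 0\pmod p$, so changing $r_i$ by a multiple of $o(a_i)$ does not change $c_ir_i$ modulo $p$. Equivalently, this is just the statement that $\phi\circ\lambda$ is a well-defined homomorphism, which is already guaranteed by construction.

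Then $v\in\ker(\lambda)$ if and only if $\sum_i c_ir_i\equiv 0\pmod p$. Next I substitute $v=\sum_j\varepsilon(g_j)$, whose $i$-th coordinate is $r_i=\sum_j e_i(g_j)$. This yields exactly the stated condition
\[
\sum_{i}c_i\sum_j e_i(g_j)\equiv 0 \pmod p,
\]
and the description of $\psi(\St_G(1))$ follows. I expect no real obstacle; the step most worth stating explicitly is the well-definedness remark above.
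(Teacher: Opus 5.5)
Your proof is correct and follows essentially the same route as the paper, which says the result follows by the same reasoning as \Cref{cor:symcase}: compute $\lambda(\varepsilon(a_i))=\sigma_iP'\mapsto c_i$, so that $\ker(\lambda)$ is cut out by $\sum_i c_i r_i\equiv 0 \pmod p$, and substitute this into \Cref{prop:sufficient-condition-first-stabilizer}. You also state explicitly that $o(a_i)c_i\equiv 0\pmod p$, so that $c_ir_i$ is well defined modulo $p$; the paper leaves this point implicit.
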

For the remainder of this section, we give a sufficient condition ensuring that $J$ is contained in $G$. We do this by describing the abelianized image of $J$ under the map $\Psi$ from \Cref{natural-wreath-recursion}.

Consider the homomorphism
\begin{align} \label{summap}
    \Sigma:\left(\prod_{i=1}^d \mathbb Z/o(a_i)\mathbb Z\right)^d
    &\longrightarrow \prod_{i=1}^d \mathbb Z/o(a_i)\mathbb Z,\\
    (v_1,\ldots,v_d)&\longmapsto v_1+\cdots+v_d.\nonumber
\end{align}
Let $D:=\ker \Sigma$. Note that $D$ can be equivalently defined as the group generated by the elements
$d_{p,q}(v):=(0,\ldots,0,v,0,\ldots,0,-v,0,\ldots,0)$,
where $v\in \prod_{i=1}^d \mathbb Z/o(a_i)\mathbb Z$ and the two non-zero entries occur in distinct coordinates $p\neq q$. Thus $D$ is precisely the abelianized shadow of the generators of $J$.

\begin{lemma}\label{Transfer}
    Let $G$ be a level transitive spinal Hanoi towers group. Then $J\leq G$ if and only if $G$ is regular branch over $G'$ and $D\leq \Psi(\St_G(1))$. 
\end{lemma}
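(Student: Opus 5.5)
We prove the two implications separately. The forward direction is largely bookkeeping with results already established. The converse is a lifting argument: we pull the finite-quotient information back through $\Psi$, and use regular branching over $G'$ to absorb the kernel.

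For the forward direction, assume $J\leq G$. Since $G$ is level transitive, \Cref{lemma:J-contains-derived-sections} shows that $G$ is regular branch over $G'$. Every generator $\delta_{p,q}(h)$ of $J$ lies in $\St(1)$, so $J\leq\St_G(1)$. Applying $\Psi$ gives
\[
\Psi(\psi(\delta_{p,q}(h)))=d_{p,q}(\varepsilon(h)).
\]
Because $\varepsilon$ is surjective (\Cref{epsilonmap}), every element $v\in\prod_{i=1}^d\mathbb Z/o(a_i)\mathbb Z$ has the form $v=\varepsilon(h)$ for some $h\in G$. Hence every generator $d_{p,q}(v)$ of $D$ lies in $\Psi(\psi(J))\subseteq\Psi(\psi(\St_G(1)))$. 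This gives $D\leq\Psi(\St_G(1))$, with $\Psi$ precomposed by $\psi$ as in the paper's abuse of notation.

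For the converse, assume $G$ is regular branch over $G'$ and $D\leq\Psi(\St_G(1))$. It suffices to show that each generator $x=\delta_{p,q}(h)$ of $J$, with $h\in G$ arbitrary, lies in $G$. Set $v=\varepsilon(h)$. By hypothesis there is $y\in\St_G(1)$ with $\Psi(\psi(y))=d_{p,q}(v)$. Write $\psi(y)=(y_1,\ldots,y_d)$; then $\varepsilon(y_r)=0$ for $r\neq p,q$, while $\varepsilon(y_p)=v$ and $\varepsilon(y_q)=-v$. Now compare with $\psi(x)=(1,\ldots,h,\ldots,h^{-1},\ldots,1)$. In each coordinate the entries of $\psi(x)$ and $\psi(y)$ have equal images under $\varepsilon$, so they differ by an element of $\ker\varepsilon=G'$ (\Cref{epsilonmap}). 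Therefore
\[
\psi(x)=\psi(y)\,(c_1,\ldots,c_d)\qquad\text{with } c_r\in G'.
\]
Regular branching over $G'$ gives $G'\times\stackrel{d}{\cdots}\times G'\leq\psi(G')\leq\psi(G)$, so $(c_1,\ldots,c_d)=\psi(z)$ for some $z\in G$. Since $\psi$ is injective on $\Aut(\T_d)$, we get $x=yz\in G$. As the generators of $J$ all lie in $G$, we conclude $J\leq G$.

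There is no serious obstacle. The one point needing care is that $\ker\Psi\leq G'\times\stackrel{d}{\cdots}\times G'$, which the paper records just after \eqref{natural-wreath-recursion}; it follows from $\ker\varepsilon=G'$, used coordinatewise as above. We must also note that $\psi(y)$ genuinely lies in $G\times\stackrel{d}{\cdots}\times G$ by self-similarity, so that $\varepsilon$ can be applied to its coordinates. Level transitivity is used only in the forward direction, to invoke \Cref{lemma:J-contains-derived-sections}.
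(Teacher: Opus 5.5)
Your proof is correct and matches the paper's argument essentially step by step. The forward direction uses \Cref{lemma:J-contains-derived-sections} together with the surjectivity of $\varepsilon$. The converse lifts $d_{p,q}(\varepsilon(h))$ to an element of $\St_G(1)$ whose sections differ coordinatewise from those of $\delta_{p,q}(h)$ by elements of $G'=\ker\varepsilon$, and this discrepancy is absorbed into $G$ via regular branching over $G'$ and injectivity of $\psi$.
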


\begin{proof}
    Assume first that $J\leq G$. By \Cref{lemma:J-contains-derived-sections}, the group $G$ is regular branch over $G'$. Moreover, if $\delta_{p,q}(h)$ is one of the generators of $J$, then
    $\Psi(\delta_{p,q}(h))=d_{p,q}(\varepsilon(h))$.
    Since 
    $$\varepsilon:G\to \prod_{i=1}^d \mathbb Z/o(a_i)\mathbb Z$$
    is surjective, every element $v$ of this product is of the form $\varepsilon(h)$ for some $h\in G$. Hence every generator $d_{p,q}(v)$ of $D$ belongs to $\Psi(\St_G(1))$, and therefore $D\leq \Psi(\St_G(1))$.

    Conversely, assume that $G$ is regular branch over $G'$ and that $D\leq \Psi(\St_G(1))$. Let $\delta_{p,q}(h)$ be a generator of $J$. Since $d_{p,q}(\varepsilon(h))\in D$, there exists $x\in \St_G(1)$ such that
    $\Psi(x)=d_{p,q}(\varepsilon(h))$.
    The elements $x$ and $\delta_{p,q}(h)$ therefore have the same image under $\Psi$. Equivalently, all sections of $x\delta_{p,q}(h)^{-1}$ lie in $G'$. Thus
    $\psi(x\delta_{p,q}(h)^{-1})\in G'\times \cdots \times G'$.

    Since $G$ is regular branch over $G'$, there exists $y\in G'$ such that
    $\psi(y)=\psi(x\delta_{p,q}(h)^{-1})$.
    By injectivity of the wreath recursion, $y=x\delta_{p,q}(h)^{-1}$. Hence
    $\delta_{p,q}(h)=y^{-1}x\in G$.
    Since this holds for every generator $\delta_{p,q}(h)$ of $J$, we conclude that $J\leq G$.
\end{proof}

This criterion is useful because the condition $D\leq \Psi(\St_G(1))$ can be checked in a finite quotient. Indeed, $D$ is generated by the elements $d_{p,q}(\varepsilon(a_i))$, with $1\leq i\leq d$ and $p\neq q$, since the vectors $\varepsilon(a_1),\ldots,\varepsilon(a_d)$ generate $\prod_{i=1}^d \mathbb Z/o(a_i)\mathbb Z$. The next proposition gives a practical way to verify this condition. 
\begin{proposition}\label{prop:two-transitive-transfer}
    Assume that $G$ is regular branch over $G'$ and that $P$ is 2-transitive on $\{1,\ldots,d\}$. Suppose that, for every $i\in\{1,\ldots,d\}$, there exist distinct coordinates $p_i,q_i\in\{1,\ldots,d\}$ and an element $x_i\in \St_G(1)$ such that
    $$
    \psi(x_i)\equiv(1,\ldots,1,a_i,1,\ldots,1,a_i^{-1},1,\ldots,1)
    \pmod{(G'\times \dots \times G')},
    $$
    where $a_i$ occurs in the $p_i$-th coordinate and $a_i^{-1}$ occurs in the $q_i$-th coordinate. Then $J\leq G$.
\end{proposition}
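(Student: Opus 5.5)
By \Cref{Transfer}, it suffices to show that $D\leq \Psi(\St_G(1))$. The hypotheses give that $G$ is regular branch over $G'$, and $P$ is $2$-transitive, hence transitive, so $G$ is level transitive by \Cref{fractandtrans}. As noted after \Cref{Transfer}, $D$ is generated by the elements $d_{p,q}(\varepsilon(a_i))$ with $1\leq i\leq d$ and $p\neq q$. The plan is therefore to show that every such element lies in the subgroup $\Psi(\St_G(1))$.

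First, fix $i$. Since $\ker\Psi\leq G'\times\cdots\times G'$ and $\varepsilon$ vanishes on $G'$, the hypothesis on $x_i$ gives directly
\[
\Psi(x_i)=d_{p_i,q_i}(\varepsilon(a_i)).
\]
So for each $i$ at least one ordered pair of coordinates is realised.

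Next, I transport this to an arbitrary ordered pair $(p,q)$ with $p\neq q$ by conjugation. By $2$-transitivity of $P$ there is $\tau\in P$ moving the ordered pair $(p_i,q_i)$ to $(p,q)$; with the right-action convention I choose $\tau$ so that conjugation by an element with root permutation $\tau$ sends coordinate $p_i$ to $p$ and $q_i$ to $q$. Since $\rho\colon G\to P$ is surjective, pick $z\in G$ with $\rho(z)=\tau$, say $\psi(z)=(z_1,\ldots,z_d)\tau$. Then $x_i^z\in\St_G(1)$, and its sections are conjugates of the sections of $x_i$ by sections of $z$, permuted by $\tau$. Since $\varepsilon$ is a homomorphism to an abelian group, it is invariant under conjugation, so
\[
\Psi(x_i^z)=\Psi(z)^{-1}\Psi(x_i)\Psi(z)=d_{p,q}(\varepsilon(a_i)).
\]
In other words, the $P$-action on $\bigl(\prod_j\mathbb Z/o(a_j)\mathbb Z\bigr)^d$ is coordinate permutation, and the abelianised section data is unchanged by conjugation. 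Consequently all generators $d_{p,q}(\varepsilon(a_i))$ lie in $\Psi(\St_G(1))$, which is a subgroup, so $D\leq\Psi(\St_G(1))$. \Cref{Transfer} then gives $J\leq G$.

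The only delicate point is the index bookkeeping under the right-action convention, namely whether conjugation by $z$ sends coordinate $p_i$ to $\tau(p_i)$ or to $\tau^{-1}(p_i)$. This is harmless: $2$-transitivity lets me choose $\tau$ or $\tau^{-1}$ as needed to realise the ordered pair $(p,q)$. Realising ordered pairs, rather than unordered ones, is also not an issue, since $d_{q,p}(v)=-d_{p,q}(v)$ is the inverse of an element already obtained.
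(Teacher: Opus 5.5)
Your proposal is correct and follows essentially the same route as the paper: reduce via \Cref{Transfer} to $D\leq\Psi(\St_G(1))$, note that $\Psi(x_i)=d_{p_i,q_i}(\varepsilon(a_i))$, and then use $2$-transitivity of $P$ and conjugation by an element of $G$ with the appropriate root permutation to obtain every $d_{p,q}(\varepsilon(a_i))$. Your remark that the level transitivity required in \Cref{Transfer} follows from transitivity of $P$ via \Cref{fractandtrans} is a small point the paper leaves implicit.
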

\begin{proof}
    By \Cref{Transfer}, it is enough to prove that $D\leq \Psi(\St_G(1))$.

    Fix $i\in\{1,\ldots,d\}$. By assumption, $\Psi(\St_G(1))$ contains the element $d_{p_i,q_i}(\varepsilon(a_i))$. Let $p\neq q$ be any ordered pair of distinct coordinates. Since $P$ is 2-transitive, there exists $\tau\in P$ such that $\tau(p_i)=p$ and $\tau(q_i)=q$. Choose $z\in G$ whose root permutation is $\tau$. Then $x_i^z$ still belongs to $\St_G(1)$, and conjugation by $z$ permutes the abelianized sections according to $\tau$. Hence $\Psi(\St_G(1))$ also contains $d_{p,q}(\varepsilon(a_i))$.

    Since $p\neq q$ was arbitrary, we have $d_{p,q}(\varepsilon(a_i))\in \Psi(\St_G(1))$ for every ordered pair $p\neq q$. This holds for every $i$, and the elements $\varepsilon(a_1),\ldots,\varepsilon(a_d)$ generate $\prod_{j=1}^d \mathbb Z/o(a_j)\mathbb Z$. Therefore the generators $d_{p,q}(v)$ of $D$ all belong to $\Psi(\St_G(1))$, and so $D\leq \Psi(\St_G(1))$.
\end{proof}

This criterion can be formulated as the following finite algorithm.

\begin{algorithm}[H]
\caption{Checking the criterion for $J\leq G$}\label{alg:Jcriterion}
\begin{algorithmic}[1]
\Require The permutations $\sigma_1,\ldots,\sigma_d$, the orders $o(a_1),\ldots,o(a_d)$, and the assumption that $G$ is regular branch over $G'$.
\Ensure Whether the criterion of \Cref{Transfer} holds.

\State Let $P=\langle \sigma_1,\ldots,\sigma_d\rangle$.
\State Work in the finite group $\left(\prod_{i=1}^d \mathbb Z/o(a_i)\mathbb Z\right)^d\rtimes P$.
\State For each $i$, define $\widehat a_i$ to be the element with root permutation $\sigma_i$ and with abelianized sections
\[
(0,\ldots,0,\varepsilon(a_i),0,\ldots,0),
\]
where $\varepsilon(a_i)$ occurs in the $i$-th coordinate.
\State Compute $H=\langle \widehat a_1,\ldots,\widehat a_d\rangle$.
\State Let $S$ be the set of elements of $H$ with trivial root permutation.
\State For every $i\in\{1,\ldots,d\}$ and every pair $p\neq q$, check whether $d_{p,q}(\varepsilon(a_i))$ belongs to $S$.
\If{all these elements belong to $S$}
    \State Return true: by \Cref{Transfer}, $J\leq G$.
\Else
    \State Return false: the criterion in \Cref{Transfer} does not hold.
\EndIf
\end{algorithmic}
\end{algorithm}

\section{Just infiniteness}\label{secjustinf}
In this section we prove that level transitive spinal Hanoi towers groups satisfying $J\leq G$ are just infinite. We recall the following.

\begin{definition}
Let $H$ be a group and $\mathfrak{X}$ a property of groups. We say that $H$ is \emph{just} $\mathfrak{X}$ if $H$ has the property $\mathfrak{X}$ and every proper quotient of $H$ does not have the property $\mathfrak{X}$.
In particular, a group $G$ is \textit{just infinite}  if $G$ is infinite and if all of its proper quotients are finite.
\end{definition}

We will use the following criterion of Grigorchuk.

\begin{theorem}[Thm. 4, \cite{JI grigorchuk}]
A branch group $G$ is just infinite if and only if, for each $m \geq 1$, the index $|\Rist_G(m):\Rist_G(m)'|$ is finite.
\end{theorem}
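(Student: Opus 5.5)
The plan is to prove both directions from one lemma about normal subgroups of branch groups: \emph{every non-trivial normal subgroup $N$ of a branch group $G$ contains $\Rist_G(m)'$ for some $m\geq 1$.} The proof of this lemma is the step I expect to need the most care, because it is where the support and commutator bookkeeping happens.

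To prove the lemma, take $1\neq g\in N$. Choose a vertex $u$, say at level $m$, with $u^g\neq u$; the subtrees rooted at $u$ and $u^g$ are then disjoint. For $x,y\in\rist_G(u)$ we compute
\[
[g,x]=g^{-1}x^{-1}gx=(x^{-1})^{g}\,x,
\]
which lies in $N$. The factor $(x^{-1})^g$ is supported in the subtree at $u^g$, so it commutes with $y$ and with $x$. Hence
\[
[[g,x],y]=[x,y]\in N .
\]
This gives $\rist_G(u)'\leq N$. Since $G$ is level transitive and $\rist_G(u)^h=\rist_G(u^h)$ for $h\in G$, we get $\rist_G(v)'\leq N$ for every vertex $v$ at level $m$. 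As $\Rist_G(m)$ is the direct product of the $\rist_G(v)$, we have $\Rist_G(m)'=\prod_{v}\rist_G(v)'\leq N$.

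For the direction ($\Leftarrow$), first note that $G$ is infinite, since it acts level transitively on an infinite tree. Let $N\neq 1$ be normal in $G$. By the lemma, $N$ contains $\Rist_G(m)'$ for some $m$. By hypothesis $|\Rist_G(m):\Rist_G(m)'|$ is finite, and $\Rist_G(m)$ has finite index in $G$ because $G$ is branch. Therefore $|G:\Rist_G(m)'|<\infty$, and so $G/N$ is finite.

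For the direction ($\Rightarrow$), fix $m$. The subgroup $\Rist_G(m)$ is normal in $G$, so its derived subgroup $\Rist_G(m)'$ is normal in $G$ as well. If we show $\Rist_G(m)'\neq 1$, just infiniteness gives $|G:\Rist_G(m)'|<\infty$, and hence $|\Rist_G(m):\Rist_G(m)'|<\infty$. To show it is non-trivial, it suffices to show that $\rist_G(v)$ is non-abelian for a vertex $v$ at level $m$. Since $\Rist_G(n)$ has finite index for all $n$ and $G$ is infinite and level transitive, each $\rist_G(w)$ is non-trivial. Take $1\neq g\in\rist_G(v)$. It moves some vertex $u$ below $v$. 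Then take $1\neq x\in\rist_G(u)$. The commutator $[g,x]=(x^{-1})^g x$ is a product of two non-trivial elements with disjoint supports (in the subtrees at $u^g$ and $u$), so it is non-trivial. This gives non-abelianness and completes the plan.
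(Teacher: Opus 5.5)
Your proof is correct and is essentially the standard argument from Grigorchuk's paper, which the present paper cites rather than reproduces. The key lemma is that every non-trivial normal subgroup contains $\Rist_G(m)'$ for some $m\geq 1$, obtained from $[[g,x],y]=[x,y]$ for $x,y\in\rist_G(u)$ with $u^g\neq u$; you then use finite-index bookkeeping for one direction and non-abelianness of $\rist_G(v)$ for the other, with the compressed details (that $G$ is infinite, that each $\rist_G(w)\neq 1$, and that $g\in\rist_G(v)$ fixes $v$) following from level transitivity as you indicate.
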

We will apply this criterion by giving an explicit description of the rigid stabilizers at every level in terms of a fixed finite-index subgroup of $G$.

Consider
\[
B=\varepsilon^{-1}(\ker\lambda)
=\rho^{-1}(P')
=
\bigl\{\, g\in G \mid \rho(g)\in P' \,\bigr\}.
\]
Then, if $J \le G$, the description of $\St_G(1)$ in
\Cref{prop:sufficient-condition-first-stabilizer} yields that an element of the form
$(1,\ldots,1,h,1,\ldots,1)$ belongs to $\psi(\St_G(1))$ if and only if
$\varepsilon(h)\in\ker\lambda$, equivalently if and only if $h\in B$. Hence
\[
\psi(\Rist_G(1))
=
B\times\stackrel{d}{\cdots}\times B.
\]
Iterating the same argument,   
we obtain
\[
\psi_m(\Rist_G(  m))
=
B\times\stackrel{d^m}{\cdots}\times B.
\]
We collect this information in the following proposition.

\begin{proposition}\ml{\label{prop:rist-explicit}}
	Let $G$ be a spinal Hanoi towers group and suppose that $J \le G$. Then, for every
	$m\geq 1$,
	\[
	\psi_m(\Rist_G(m))
	=
	B\times\stackrel{d^m}{\cdots}\times B.
	\]
\end{proposition}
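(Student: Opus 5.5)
The plan is an induction on $m$. Throughout, $\psi_m$ denotes the level-$m$ wreath recursion, so an element of $\St_G(m)$ is identified with its $d^m$ sections at level $m$. The case $m=1$ comes first. For a vertex $x\in X$, an element $g\in\rist_G(x)$ lies in $\St_G(1)$. Indeed, if $\rho(g)$ moved $x$ to some $y\neq x$, then $g$ would move every vertex below $x$ out of the subtree $xX^*$, which is impossible since $g$ is supported in that subtree; so $\rho(g)$ fixes $x$, and it fixes every $y\neq x$ by the support condition. Moreover all sections of $g$ other than the one at $x$ are trivial. Therefore
\[
\psi(\rist_G(x))=\{(1,\ldots,1,h,1,\ldots,1)\in\psi(\St_G(1))\},
\]
with $h$ in coordinate $x$. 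By \Cref{prop:sufficient-condition-first-stabilizer} (which needs $J\le G$), such a tuple lies in $\psi(\St_G(1))$ if and only if $\varepsilon(h)\in\ker\lambda$. By the definition of $\lambda$, this holds if and only if $\rho(h)\in P'$, that is, $h\in B$. Since the $\rist_G(x)$ commute and have disjoint supports, taking the product gives $\psi(\Rist_G(1))=B\times\cdots\times B$.

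For the inductive step, take a vertex $v=xw$ with $x\in X$ and $|w|=m$. The aim is to show $\psi_{m+1}(\rist_G(v))=\{(1,\ldots,b,\ldots,1): b\in B\}$, with $b$ in coordinate $v$. An element $g\in\rist_G(v)$ lies in $\rist_G(x)$. By the first step, $\psi(g)=(1,\ldots,g_x,\ldots,1)$ with $g_x\in B$, and $g_x$ is supported in the subtree at $w$, so $g_x\in\rist_G(w)$. By the induction hypothesis applied to $\rist_G(w)$, the section $g_x|_w$ lies in $B$. Conversely, take $b\in B$. The induction hypothesis gives $h\in\rist_G(w)$ whose section at $w$ is $b$. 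If $h\in B$, the first step gives $g\in\rist_G(x)$ with $\psi(g)=(1,\ldots,h,\ldots,1)$, and then $g\in\rist_G(v)$ with section $b$ at $v$.

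The one point needing an argument is that $\rist_G(w)\le B$ for $|w|\ge1$. An element of $\rist_G(w)$ fixes the first-level vertex $w_1$, the first letter of $w$, and acts trivially on every other first-level subtree, so it lies in $\St_G(1)$. Hence its image under $\rho$ is trivial, and in particular lies in $P'$. So every $h$ built by the induction lies in $B$ automatically.

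Finally, since $\Rist_G(m+1)$ is the direct product of the commuting subgroups $\rist_G(v)$ over $|v|=m+1$, we get $\psi_{m+1}(\Rist_G(m+1))=B\times\cdots\times B$ with $d^{m+1}$ factors. The main point to get right is the step passing through $\rist_G(w)\le\St_G(1)\le B$, together with the fact that the characterization of $B$ in the first step depends on the equality in \Cref{prop:sufficient-condition-first-stabilizer}, which is where the hypothesis $J\le G$ is essential.
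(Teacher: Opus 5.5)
Your proof is correct and follows the paper's route: the base case $\psi(\Rist_G(1))=B\times\cdots\times B$ comes from \Cref{prop:sufficient-condition-first-stabilizer} exactly as in the paper, and your induction on $m$ is a careful version of the paper's ``iterating the same argument''. Your observation that $\rist_G(w)\le\St_G(1)\le B$ for $|w|\ge 1$ is the detail the paper leaves implicit: it is what allows the element supplied by the induction hypothesis to be placed in a single first-level coordinate.
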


Thus, it suffices to prove that $|B:B'|<\infty$. To this end, note first that
\[
J\leq G'\leq B.
\]
Indeed, $J\leq G'$ because every generator of $J$ has total exponent zero, while
$G'=\ker\varepsilon\leq \varepsilon^{-1}(\ker\lambda)=B$. Since $B'\leq G'\leq B$, we have
\[
|B:B'|=|B:G'|\,|G':B'|.
\]
Consequently, in order to prove that $|B:B'|<\infty$, it is enough to prove that both $|B:G'|$ and $|G':B'|$ are finite. Note that the first index is finite as $G/G'$ is finite by \Cref{epsilonmap}.
Therefore it remains to prove that $|G':B'|<\infty$.

\begin{lemma}\label{lemma:Gprime-mod-Bprime-finite}
	Let $G$ be a spinal Hanoi towers group and suppose that $J\leq G$. Then
	\[
	|G':B'|<\infty .
	\]
\end{lemma}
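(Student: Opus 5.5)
The plan is to show that $G'/B'$ is a finitely generated abelian group of finite exponent. First, $G'/B'$ is abelian: $B'\leq G'$, and $B$ has finite index in $G$, so it suffices to control commutators of $G'$ modulo $B'$. Note that $[G',G']\leq [B,B]=B'$ because $G'\leq B$. Hence $G'/B'$ is abelian. Next, $G'$ is finitely generated, since it has finite index in the finitely generated group $G$ by \Cref{epsilonmap}. It therefore remains to show that $G'/B'$ is a torsion group, or more directly that it has finite exponent.

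To get torsion I would exploit the branch structure coming from $J\leq G$. By \Cref{lemma:J-contains-derived-sections} we have $\psi(J')\geq G'\times\stackrel{d}{\cdots}\times G'$, and \Cref{prop:rist-explicit} gives $\psi(\Rist_G(1))=B\times\stackrel{d}{\cdots}\times B$. I would aim for the inclusion $\psi(B')\geq B'\times\cdots\times B'$ together with a finite index statement at the level of sections. The cleaner route is as follows. The subgroup $K:=\psi^{-1}(G'\times\cdots\times G')$ is contained in $G'$ and has finite index in $G$, because $G$ is regular branch over $G'$. I claim $K\leq B'\cdot(\text{finite})$. Indeed, for $x,y\in G$ and distinct $p,q,r$, the element $[\delta_{p,q}(x^{-1}),\delta_{q,r}(y)]$ produces $[x,y]$ at coordinate $q$. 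If I choose $x,y$ with $\varepsilon(x),\varepsilon(y)$ suitable, then the elements $\delta_{p,q}(x^{-1})$ and $\delta_{q,r}(y)$ lie in $J\leq G'\leq B$. Therefore $(1,\ldots,[x,y],\ldots,1)\in\psi(B')$ for all $x,y\in G$, which gives $\psi(B')\geq G'\times\cdots\times G'$, that is, $K\leq B'$.

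Given $K\leq B'\leq G'$ and $|G:K|<\infty$, the conclusion $|G':B'|\leq|G:K|<\infty$ is immediate. This makes the abelian and torsion discussion above unnecessary, and the proof reduces to the single commutator computation. The key point to verify is that the elements $\delta_{p,q}(x^{-1})$ and $\delta_{q,r}(y)$ used there genuinely belong to $B$. This holds because each generator of $J$ lies in $G$ by hypothesis, lies in $\St_G(1)$, and has $\varepsilon=0$, so it lies in $G'\leq B$. Their commutator then lies in $B'$. The one obstacle I anticipate is the need for three distinct coordinates $p,q,r$, which requires $d\geq 3$. For $d=2$, $P$ is $\Sym(2)$ and the fixed-point condition forces $\sigma_i=1$, which is excluded. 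Hence $d\geq 3$ automatically and the argument applies.
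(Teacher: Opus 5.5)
Your final argument is correct, and it takes a different, shorter route than the paper. You produce a single finite-index subgroup of $G$ lying inside $B'$, namely $K=\psi^{-1}(G'\times\stackrel{d}{\cdots}\times G')$. Since $J\le G'\le B$ we have $J'\le B'$, so \Cref{lemma:J-contains-derived-sections}(ii) gives $G'\times\cdots\times G'\le\psi(J')\le\psi(B')$. Hence $|G':B'|\le|G':K|<\infty$. Your commutator $[\delta_{p,q}(x^{-1}),\delta_{q,r}(y)]$ is exactly the proof of that lemma, so you can cite it directly. No ``suitable'' choice of $x,y$ is needed, since every $\delta_{p,q}(h)$ lies in $J$.

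The paper instead splits $|G':B'|=|G':JB'|\,|JB':B'|$. It bounds the first factor by $|P'|$ using $G'\cap\St_G(1)=J$ (\Cref{lem:JeqGprimeSt}). For the second, it shows each map $h\mapsto\delta_{p,q}(h)B'$ is a homomorphism $G\to B/B'$ that kills $G'$, so $JB'/B'$ is generated by finitely many quotients of the finite group $G/G'$. Your version dispenses with \Cref{lem:JeqGprimeSt} (whose proof the paper leaves to the reader) and with the maps $\theta_{p,q}$. The paper's version is more structural, separating the contribution of the permutation group $P'$ from that of $J$.

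Two small points. First, the lemma does not assume level transitivity, so ``regular branch'' in the paper's sense is not available. You only use $G'\times\cdots\times G'\le\psi(\St_G(1))$ and $|G:G'|<\infty$. These give $|G:K|=|G:\St_G(1)|\,|\psi(\St_G(1)):G'\times\cdots\times G'|\le|P|\,|G:G'|^d$, and you should state this explicitly rather than appeal to regular branchness. Second, the opening paragraph on $G'/B'$ being abelian and torsion can simply be deleted.
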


\begin{proof}
	Since $B'\leq G'$ and $J\leq G'$, we have
	\[
	|G':B'|=|G':JB'|\,|JB':B'|.
	\]
	Thus it suffices to prove that both factors are finite. 

	Now $\rho(G')=P'$, and the kernel of $\rho|_{G'}$ is
	$G'\cap \St_G(1)=J$, where the last equality follows from \Cref{lem:JeqGprimeSt}. Thus
	$G'/J\cong P'$,
	so $G'/J$ is finite. Since $J\leq JB'\leq G'$, it follows that $G'/JB'$ is finite.
	
	It remains to prove that $JB'/B'$ is finite. For $p\neq q$, consider
	\[
	\theta_{p,q}\colon G\longrightarrow B/B',
	\qquad
	h\longmapsto \delta_{p,q}(h)B'.
	\]
	This is a homomorphism, because
	\[
	\delta_{p,q}(x)\delta_{p,q}(y)\delta_{p,q}(xy)^{-1}\in B'
	\]
	by \Cref{lemma:J-contains-derived-sections}. Moreover, if $h\in G'$, then
	$\delta_{p,q}(h)\in B'$, again by \Cref{lemma:J-contains-derived-sections}. Hence
	$G'\leq \ker\theta_{p,q}$, so $\theta_{p,q}$ factors through the finite group $G/G'$.
	Therefore each image $\operatorname{im}\theta_{p,q}$ is finite.
	
	Finally, $J$ is generated by the elements $\delta_{p,q}(h)$ with $p\neq q$ and $h\in G$.
	Since there are only finitely many pairs $p\neq q$, the image of $J$ in the abelian group
	$B/B'$ is generated by finitely many finite subgroups. Hence $JB'/B'$ is finite. Therefore
	$|G':B'|<\infty$.
\end{proof}
\begin{theorem}\label{thm:justinfinite}
Let $G$ be a level transitive spinal Hanoi towers group and suppose that $J\leq G$. Then $G$ is just infinite.
\end{theorem}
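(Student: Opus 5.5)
The plan is to apply Grigorchuk's criterion: check that $G$ is a branch group and that $|\Rist_G(m):\Rist_G(m)'|<\infty$ for every $m\geq 1$.

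\emph{Step 1: $G$ is infinite and branch.} By \Cref{lemma:J-contains-derived-sections}, since $G$ is level transitive and $J\leq G$, the group $G$ is regular branch over $G'$. By \Cref{epsilonmap}, $G'$ has finite index. The standard argument for regular branch groups then gives $\psi_m^{-1}(G'\times\stackrel{d^m}{\cdots}\times G')\leq \Rist_G(m)$, obtained by iterating the inclusion $G'\times\cdots\times G'\leq\psi(G')$ together with injectivity of $\psi$. This subgroup has finite index in $G$ for each $m$. One way to see this is by induction: $|G:\psi^{-1}(G'\times\cdots\times G')|\leq |G:G'|\,|G':\psi^{-1}(G'^d)|$, and $|G':\psi^{-1}(G'^d)|\leq|G'\cap\St_G(1):\psi^{-1}(G'^d)|\cdot|P|$, which is at most $|G/G'|^d|P|$. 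So $\Rist_G(m)$ has finite index and $G$ is branch. In particular $G$ is infinite, since a level transitive group acting on an infinite tree is infinite. Alternatively, $G'\times\cdots\times G'\leq\psi(G')$ with $G'\neq 1$ (because $P'$ or the $a_i$ are nontrivial) forces $G'$ to be infinite. Here I should note that $G'\neq1$: if $G$ were abelian it would be finite of bounded order and could not be level transitive, so the first argument suffices anyway.

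\emph{Step 2: finite abelianization of the rigid stabilizers.} By \Cref{prop:rist-explicit}, $\psi_m(\Rist_G(m))=B\times\stackrel{d^m}{\cdots}\times B$. Since $\psi_m$ is injective, $\Rist_G(m)\cong B^{d^m}$. Therefore
\[
|\Rist_G(m):\Rist_G(m)'|=|B:B'|^{d^m}.
\]
It suffices to show $|B:B'|<\infty$. As computed before \Cref{lemma:Gprime-mod-Bprime-finite}, $|B:B'|=|B:G'|\,|G':B'|$. The first factor is finite because $G/G'$ is finite by \Cref{epsilonmap}. The second factor is finite by \Cref{lemma:Gprime-mod-Bprime-finite}.

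\emph{Step 3.} Grigorchuk's theorem now yields that $G$ is just infinite.

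The only delicate point is Step 1, namely making sure the finite-index claim for $\Rist_G(m)$ is justified from regular branchness over a finite-index subgroup. This is standard, and I would cite it as such or use the short induction above. Everything else is a direct assembly of \Cref{prop:rist-explicit}, \Cref{epsilonmap} and \Cref{lemma:Gprime-mod-Bprime-finite}.
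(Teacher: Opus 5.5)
Your proof is correct and follows the paper's argument: regular branchness over $G'$ (from \Cref{lemma:J-contains-derived-sections} together with the finiteness of $G/G'$) makes $G$ a branch group. Then \Cref{prop:rist-explicit} reduces Grigorchuk's criterion to $|B:B'|=|B:G'|\,|G':B'|<\infty$, exactly as in the paper; your only addition is the explicit induction showing that regular branchness over a finite-index subgroup gives rigid stabilizers of finite index, which the paper treats as standard.
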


\begin{proof}
By \Cref{lemma:J-contains-derived-sections}(ii), we have $G'\times \stackrel{d}{\cdots}\times G'\leq\psi(J')\leq\psi(G')$; since $|G:G'|<\infty$ by \Cref{epsilonmap} and $G$ is level transitive, $G$ is regular branch over $G'$. In particular, $G$ is a branch group. Therefore, by Grigorchuk's
criterion, it is enough to prove that
\[
|\Rist_G(m):\Rist_G(m)'|<\infty
\]
for every $m\geq 1$.

By the description of the rigid stabilizers obtained above,
\[
\psi_m(\Rist_G( m))
=
B\times\stackrel{d^m}{\cdots}\times B.
\]
Hence
\[
\psi_m(\Rist_G( m)')
=
B'\times\stackrel{d^m}{\cdots}\times B',
\]
and therefore
\[
|\Rist_G( m):\Rist_G( m)'|
=
|B:B'|^{d^m}.
\]
By the previous lemmas, $|B:B'|<\infty$. Thus
$|\Rist_G( m):\Rist_G( m)'|<\infty$ for every $m\geq 1$, and Grigorchuk's criterion
implies that $G$ is just infinite.
\end{proof}

The results proved above show that the containment $J\leq G$ is a sufficient condition for a level transitive spinal Hanoi towers group $G$ to be just infinite. The classical Hanoi towers group $\mathcal{H}^{(3)}$ is known not to be just infinite \cite{Rachel} and, to the best of our knowledge, it is currently the only known level transitive spinal Hanoi towers group with this property. Moreover, $J\nleq \mathcal{H}^{(3)}$. This motivates the following question.

\begin{question}\label{q:general}
	Let $G$ be a level transitive spinal Hanoi towers group. Does
	\[
	J\nleq G
	\qquad\Longrightarrow\qquad
	G\text{ is not just infinite}?
	\]
	Equivalently, is the failure of $J\leq G$ always accompanied by a proper infinite quotient of $G$?
\end{question}
A natural test case is the $5$-ary Hanoihedral group $\Dfive$, introduced by Garrido and \v{S}uni\'c~\cite{GarridoSunic2025}. It is generated by $a_1,\ldots,a_5$, where
\begin{align*}
	\psi(a_1)&=(a_1,1,1,1,1)(2 \ 5)(3 \ 4),&
	\psi(a_2)&=(1,a_2,1,1,1)(1\ 3)(4 \ 5),\\
	\psi(a_3)&=(1,1,a_3,1,1)(1 \ 5)(2 \ 4),&
	\psi(a_4)&=(1,1,1,a_4,1)(1 \ 2)(3 \ 5),\\
	\psi(a_5)&=(1,1,1,1,a_5)(1 \ 4)(2 \ 3).
\end{align*}
Garrido and \v{S}uni\'c proved that $\Dfive$ is fractal, contracting, and regular branch over its derived subgroup. Moreover, they proved the following:
\begin{enumerate}
    \item $\Dfive/\Dfive'\cong \mathbb Z/2\mathbb Z\times\stackrel{5}{\cdots}\times\mathbb Z/2\mathbb Z$
\item For $n \geq 1$, $\Rist_{\Dfive}(n)\cong \Dfive'\times\stackrel{5^n}{\cdots}\times\Dfive'$.
\end{enumerate}

They also prove that the rigid kernel of $\Dfive$ is an infinite
elementary abelian $2$-group, isomorphic to a countable Cartesian
product of copies of $\mathbb{Z}/2\mathbb{Z}$. This does not, however,
determine whether $\Dfive$ is abstractly just infinite.

The equations describing the first-level stabilizer in
\cite[Proposition~3.7(iii)]{GarridoSunic2025} also give a direct
obstruction to the containment $J\leq \Dfive$. Let
$g\in \St_{\Dfive}(1)$ and write $\psi(g)=(g_1,\ldots,g_5)$.

Throughout this paragraph, the subscripts of the exponent maps $e_k$
are read modulo $5$, with representatives in $\{1,\ldots,5\}$. Then,
for every $r\in\{1,\ldots,5\}$,
\[
\sum_{i=1}^{5} e_{r+i-1}(g_i)\equiv 0\pmod 2,
\qquad
\sum_{i=1}^{5} e_{r-i+1}(g_i)\equiv 0\pmod 2.
\]

Fix distinct $p,q\in\{1,\ldots,5\}$ and
$j\in\{1,\ldots,5\}$. Since $a_j$ is an involution, the element
$\delta_{p,q}(a_j)$ has section $a_j$ in the $p$-th and $q$-th
coordinates and trivial sections elsewhere. Choose
$r\in\{1,\ldots,5\}$ such that
\[
r+p-1\equiv j\pmod 5,
\]
or equivalently, $r\equiv j-p+1\pmod 5$. In the first family of
equations, the $p$-th summand is then
\[
e_{r+p-1}(a_j)=e_j(a_j)=1.
\]
The $q$-th summand is zero, since $q\neq p$ implies
$r+q-1\not\equiv j\pmod 5$, and all the remaining summands are zero
because the corresponding sections are trivial. Hence the associated
parity condition is not satisfied, and therefore
\[
\delta_{p,q}(a_j)\notin\Dfive.
\]
It follows that $J\nleq\Dfive$. Thus, $\Dfive$ is a particularly good
test case for Question~\ref{q:general}.

If $\Dfive$ is not just infinite, it provides a second natural
Hanoi-type example, beyond $\mathcal{H}^{(3)}$, in which the failure
of $J\leq G$ is accompanied by the failure of just infiniteness. On
the other hand, if $\Dfive$ is just infinite, then
Question~\ref{q:general} has a negative answer.

% \fc{The results of \Cref{sectbranch} and \Cref{secjustinf} prove \Cref{thmB}. In particular points $(i)-(ii)$
%  are proved in \Cref{lemma:J-contains-derived-sections}; point $(iii)$ is proved in \Cref{prop:sufficient-condition-first-stabilizer}; point $(iv)$ is proved in \Cref{prop:rist-explicit}; point $(v)$ is proved in \Cref{thm:justinfinite} with \Cref{alg:Jcriterion}. Finally the proof is concluded with \Cref{Transfer}}

\section{Examples: spinal Hanoi towers groups of type $(d,m)$}\label{sec:examples}
In this section, we illustrate the strength of the machinery developed in the previous sections by applying it to a natural generalization of the Hanoi towers group $\mathcal{H}^{(3)}$, which we call the Hanoi towers groups of type $(d,m)$. These groups fall into the broader family of spinal Hanoi towers groups.

\begin{definition}\label{def:typem}
Let $d\geq 3$ and $2\leq m\leq d-1$. The \emph{spinal Hanoi towers group of type $(d,m)$ on $\T_d$} is the group
\[
G^{(d,m)}=\langle a_1,\dots,a_d\rangle\leq \Aut(\T_d),
\]
where
\[
a_i=(1,\stackrel{i-1}{\dots},1,a_i,1,\dots,1)\sigma_i
\quad\text{and}\quad \sigma_i=(i+1\ \ i+2\ \ \cdots\ \ i+m),
\]
for every $i\in\{1,\dots,d\}$, with indices taken modulo $d$.
\end{definition}
Note that $m\geq 2$ is required to avoid the trivial group, while $m\leq d-1$ is forced by the condition that $\sigma_{i}$ must fix $i$.

The original Hanoi towers group of Grigorchuk and \v{S}uni\'c~\cite{Sunic} coincides with $G^{(3,2)}$, while Skipper's groups~\cite{Rachel} correspond to the case of maximum type where $m=d-1$.

Our goal is to show that spinal Hanoi towers groups of type $(d,m)$ can be well understood with the machinery developed in the previous sections. In principle, by \Cref{lemma:J-contains-derived-sections}, it is enough to prove that $J\leq G^{(d,m)}$, since this inclusion implies that $G^{(d,m)}$ is regular branch over its derived subgroup. However, in general it is easier to proceed in the opposite order: we first prove  that $G^{(d,m)}$ is regular branch over $(G^{(d,m)})'$ in \Cref{propstronglyfract}, and then use this branch structure to establish the inclusion $J\leq G^{(d,m)}$.

\begin{lemma}\label{permutationgenerating}
   Let $G^{(d,m)}$ be a spinal Hanoi towers group of type $m$. Then
   $$P=\begin{cases}
\alt(d) & \text{if $m$ is odd,}\\
\Sym(d) & \text{if $m$ is even,} \\
\end{cases}$$
where $\alt(d)$ denotes the alternating group on $d$ elements, and $\Sym(d)$ the symmetric group on $d$ elements.
\end{lemma}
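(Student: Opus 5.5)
The proof splits into a parity bound and a generation argument. The upper bound is immediate. Each $\sigma_i=(i+1\ \cdots\ i+m)$ is an $m$-cycle, hence has sign $(-1)^{m-1}$. So when $m$ is odd every generator is even and $P\leq\alt(d)$. When $m$ is even some generator is odd, so $P\not\leq\alt(d)$; once we show $\alt(d)\leq P$ in all cases, this forces $P=\Sym(d)$. The whole task is therefore to prove $\alt(d)\leq P$.

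\textbf{Transitivity.} The support of $\sigma_i$ is the cyclic interval $\{i+1,\dots,i+m\}$. Consecutive supports overlap in $m-1\geq 1$ points, and together they cover $\mathbb Z/d\mathbb Z$, so $P$ is transitive.

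\textbf{Primitivity and generation.} The cleanest route is Jordan's theorem: a primitive permutation group containing a $p$-cycle for a prime $p\leq n-3$ contains $\alt(n)$. A more elementary alternative avoids primitivity entirely. I would produce a $3$-cycle in $P$ and then use that a transitive group generated by overlapping cycles in this chained fashion contains $\alt(d)$.

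\textbf{Getting the $3$-cycle.} Compute the commutator $[\sigma_i,\sigma_{i+1}]$ for two consecutive cycles. Their supports $\{i+1,\dots,i+m\}$ and $\{i+2,\dots,i+m+1\}$ overlap in $m-1$ points. For two cycles whose supports are intervals sharing all but one endpoint, and which act as shifts in the same direction, the composite $\sigma_i\sigma_{i+1}^{-1}$ should be a $3$-cycle or a transposition-like short permutation. The plan is to check this directly: $\sigma_i$ sends $x\mapsto x+1$ on $i+1,\dots,i+m-1$ and $i+m\mapsto i+1$, while $\sigma_{i+1}^{-1}$ sends $x\mapsto x-1$ on $i+3,\dots,i+m+1$ and $i+2\mapsto i+m+1$. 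The composite fixes $i+2,\dots,i+m-1$, so it moves only the three points $i+1$, $i+m$, $i+m+1$. Being even (when $m$ is odd) or a product of two same-length cycles, it is a $3$-cycle $c_i$ on $\{i+1,i+m,i+m+1\}$.

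\textbf{From $3$-cycles to $\alt(d)$.} Conjugating $c_i$ by powers of $\sigma_i$ and $\sigma_{i+1}$ moves the points around, giving $3$-cycles whose supports chain around the whole cycle $\mathbb Z/d\mathbb Z$. I would then invoke the standard fact that the $3$-cycles in a group whose support graph (points joined when they lie in a common $3$-cycle) is connected generate the full alternating group on the union. Concretely, I aim to produce the $3$-cycles $(1\,2\,k)$ for all $k$, which generate $\alt(d)$. The degenerate cases need separate checks: $m=2$, where the cycles are transpositions and adjacent transpositions around the cycle generate $\Sym(d)$ directly, and $m=d-1$, where the supports cover everything and $c_i$ still works when $d\geq 4$. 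The case $d=3$ forces $m=2$ and is trivial.

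\textbf{Main obstacle.} The delicate step is making the connectivity-of-$3$-cycles argument clean for all $d$ and $m$ simultaneously, including the wrap-around indices modulo $d$ when $2m\geq d$. If the explicit chaining becomes messy, I would fall back on Jordan's theorem. That route requires primitivity, which I would prove by showing that a nontrivial block must contain $\{i+1,i+m,i+m+1\}$ for every $i$ whenever it meets two of those points. This forces the block to be everything.
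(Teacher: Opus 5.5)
Your proposal is correct and follows essentially the paper's route: a quotient of consecutive root permutations is a $3$-cycle (your $\sigma_i\sigma_{i+1}^{-1}$ on $\{i+1,i+m,i+m+1\}$ is a conjugate of the paper's $\sigma_{i+1}^{-1}\sigma_i=(i+m+1\ \ i+1\ \ i+2)$), these $3$-cycles generate $\alt(d)$, and parity then decides between $\alt(d)$ and $\Sym(d)$. The step you flag as the main obstacle is in fact immediate, and it is exactly the generation claim the paper asserts without proof. For every $2\le m\le d-1$ the points $i+1,i+m,i+m+1$ are distinct, and each support contains the consecutive pair $\{i+m,i+m+1\}$, so the support graph of $c_1,\dots,c_d$ already contains the full $d$-cycle; the connectivity criterion for $3$-cycles then gives $\alt(d)$ with no conjugation, no separate treatment of $m=2$ or $m=d-1$, and no appeal to Jordan's theorem.
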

\begin{proof}
	All indices are taken modulo $d$. We first note that $P$ contains $\alt(d)$. Indeed,
	\[
	\sigma_{i+1}^{-1}\sigma_i=(i+m+1\ \ i+1\ \ i+2),
	\]
    and for $i\in\{1,\ldots,d\}$, these  elements generate $\alt(d)$.
	
	It remains only to distinguish the parity. If $m$ is odd, then each $\sigma_i$ is an even permutation, so $P\leq \alt(d)$, and hence $P=\alt(d)$. If $m$ is even, then each $\sigma_i$ is odd. Since $\alt(d)\leq P$ and $P$ contains an odd permutation, we get $P=\Sym(d)$.
\end{proof}

We now record  some computations needed below, distinguishing the cases $d\geq 5$ and $d=4$.

\begin{lemma}\label{lem:sections-m2}
	Let $G=G^{(d,2)}$ with $d\geq 5$. Then, for every $i$,
	\[
	\psi([a_i,a_{i-2}])
	=(1,\ldots,1,
	\underset{i-1}{a_i},
	\underset{i}{a_i^{-1}},1,\ldots,1).
	\]
\end{lemma}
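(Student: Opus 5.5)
The plan is a direct computation of the wreath recursion of the commutator, using the conventions fixed in \Cref{sec:preliminaries}: $[x,y]=x^{-1}y^{-1}xy$, automorphisms act on the right, and $(gh)|_u=g|_u\,h|_{u^g}$. Write $a=a_i$ and $b=a_{i-2}$. For $m=2$ we have $\sigma_i=(i{+}1\ \ i{+}2)$ and $\sigma_{i-2}=(i{-}1\ \ i)$.

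\emph{Step 1 (root permutation).} The hypothesis $d\geq 5$ guarantees that the five indices $i-2,i-1,i,i+1,i+2$ are pairwise distinct modulo $d$. Hence $\sigma_i$ and $\sigma_{i-2}$ have disjoint supports and commute. The root permutation of $[a,b]$ is therefore trivial, and $[a,b]\in\St_G(1)$.

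\emph{Step 2 (sections).} Since $\psi(a^{-1})=(1,\dots,1,a^{-1},1,\dots,1)\sigma_i^{-1}$ with $a^{-1}$ in coordinate $i$, and similarly for $b^{-1}$ in coordinate $i-2$, the only possibly non-trivial factors are:
\begin{itemize}
\item $a^{\pm1}$, contributing only when the current vertex is $i$;
\item $b^{\pm1}$, contributing only when the current vertex is $i-2$.
\end{itemize}
For a first-level vertex $u$ we follow the orbit
\[
u\mapsto u^{\sigma_i^{-1}}\mapsto u^{\sigma_i^{-1}\sigma_{i-2}^{-1}}\mapsto \cdots
\]
and record which factors fire.

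\begin{itemize}
\item \emph{Coordinate $u=i$.} The factor $a^{-1}$ fires. The point then moves $i\mapsto i\mapsto i-1\mapsto i-1$, so no further factor fires. The section is $a^{-1}$.
\item \emph{Coordinate $u=i-1$.} The point moves $i-1\mapsto i-1\mapsto i\mapsto i$. The factor $a$ fires in the third slot, and $b$ does not, since $i\neq i-2$. The section is $a$.
\item \emph{Coordinate $u=i-2$.} This point is fixed by both $\sigma_i$ and $\sigma_{i-2}$; here $i-2\notin\{i+1,i+2\}$ is exactly where $d\geq5$ is used. So $b^{-1}$ and $b$ both fire and cancel, leaving the trivial section.
\item \emph{Other vertices.} Every other $u$ never reaches $i$ or $i-2$ at the relevant steps, so all remaining sections are trivial.
\end{itemize}

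This yields the stated tuple, with $a_i$ in coordinate $i-1$ and $a_i^{-1}$ in coordinate $i$.

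The only real obstacle is bookkeeping: tracking the correct vertex at each factor under the right-action convention, and checking that the distinctness of the indices (which is what requires $d\geq 5$) prevents any further cancellations or stray contributions. The tracking can also be sanity-checked by a single explicit instance, for example $d=5$ and $i=3$.
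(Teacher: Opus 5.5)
Your proposal is correct and is essentially the paper's argument. Both rest on the disjointness, for $d\geq 5$, of $\sigma_i=(i{+}1\ i{+}2)$ and $\sigma_{i-2}=(i{-}1\ i)$, followed by a direct wreath-recursion computation. The paper just packages this more compactly: it writes $[a_i,a_{i-2}]=a_i^{-1}\,a_i^{a_{i-2}}$ and notes that conjugating by $a_{i-2}$ moves the section $a_i$ from coordinate $i$ to coordinate $i-1$, whereas you track all four factors coordinate by coordinate.
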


\begin{proof}
	Here $\sigma_i=(i+1\ i+2)$ and $\sigma_{i-2}=(i-1\ i)$ are disjoint, hence commute. Thus
	\[
	\psi(a_i^{a_{i-2}})
	=(1,\ldots,1,\underset{i-1}{a_i},1,\ldots,1)\sigma_i.
	\]
	The result follows by multiplying with $a_i^{-1}$.
\end{proof}

In the following lemma all indices are taken modulo $4$.
\begin{lemma}\label{lem:smallsupport42}
	Let $G=G^{(4,2)}$. Write $x_i=(a_i a_{i+1}a_{i+2}a_{i+1})^2(a_i a_{i-1})^2$. Then
    \[
	\psi(x_i)=(\underset{i}{a_i},1,
	\underset{i+2}{a_i},1).
	\]
\end{lemma}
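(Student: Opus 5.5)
The plan is a direct wreath-recursion computation. First reduce to a single index. The map $j\mapsto j+1$ on $X$ (indices mod $4$) induces a tree automorphism whose conjugation sends $a_j$ to $a_{j+1}$, since $\sigma_{j+1}$ is the conjugate of $\sigma_j=(j+1\ j+2)$ by this shift. So it suffices to treat $i=1$, that is
\[
x_1=(a_1a_2a_3a_2)^2(a_1a_4)^2,
\]
and then transport the result to every $i$ by the shift. Throughout I use the paper's right-action convention $(gh)|_u=g|_u\,h|_{u^g}$, with root permutations composed from left to right.

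\emph{Step 1: root permutations.} Write $y=a_1a_2a_3a_2$ and $w=a_1a_4$. The root permutation of $y$ is $(2\,3)(3\,4)(4\,1)(3\,4)$. Composing left to right, this sends $1\mapsto 3$, $3\mapsto 2$, $2\mapsto 1$ and fixes $4$, so it equals the $3$-cycle $1\mapsto3\mapsto2\mapsto1$. Hence $y^2$ has root permutation $1\mapsto2\mapsto3\mapsto1$. The root permutation of $w$ is $(2\,3)(1\,2)$, which is the cycle $1\mapsto2\mapsto3\mapsto1$. Hence $w^2$ has root permutation $1\mapsto3\mapsto2\mapsto1$. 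The two cancel, so $x_1\in\St_G(1)$.

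\emph{Step 2: sections of $y$ and $w$.} For each letter of $y$ I track which vertex it acts on and record a nontrivial section exactly when the letter $a_j$ meets vertex $j$. Starting at vertex $u$, the successive vertices are $u$, $u^{\sigma_1}$, $u^{\sigma_1\sigma_2}$, $u^{\sigma_1\sigma_2\sigma_3}$. Running this for $u=1,\dots,4$ gives $\psi(y)=(y_1,y_2,y_3,y_4)\,\pi$, where each $y_u$ is a short word in the generators. I do the same for $w$.

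\emph{Step 3: assemble $x_1$.} I compute $y^2|_u=y_u\,y_{u^{\pi}}$ and similarly for $w^2$. Then
\[
x_1|_u=y^2|_u\,w^2|_{u^{\pi^2}}.
\]
The claim is that the resulting words cancel to $a_1$ at coordinates $1$ and $3$ and to $1$ at coordinates $2$ and $4$. Note that the exponent count of Corollary~\ref{cor:epsilon-sections} is consistent with this: $x_1$ contains $a_1$ four times and $a_2$ four times, so its sections have total exponent $0$ in $a_2$ and total exponent $4\equiv0$ in $a_1$ modulo $2$. This means $\varepsilon$ alone cannot rule out coordinates $1$ and $3$ each carrying a single $a_1$; the explicit computation is what decides it.

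\emph{Main obstacle.} The real difficulty is bookkeeping rather than any idea: every vertex must be tracked correctly under the left-to-right convention, and each involution $a_j$ must be paired with its section at the right coordinate. I will check the section words letter by letter. Wherever a product such as $a_2a_2$ appears I will cancel it using $a_j^2=1$ (which holds since $o(a_j)=o(\sigma_j)=2$), and I will expect the surviving factors to reduce to $a_1$ exactly at coordinates $1$ and $3$.
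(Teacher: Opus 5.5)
Your route is the paper's: reduce to $i=1$ by the cyclic relabelling, compute $\psi$ of $y^2=(a_1a_2a_3a_2)^2$ and $w^2=(a_1a_4)^2$, and multiply. Your root permutations are correct for the paper's stated right-action convention, and so are the reduction and the formulas $y^2|_u=y_u\,y_{u^\pi}$ and $x_1|_u=y^2|_u\,w^2|_{u^{\pi^2}}$.

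\textbf{The gap: the sections are never computed.} Steps 2 and 3 only say what you ``expect'' them to reduce to. The lemma consists of nothing but this computation, so as written the proposal is a plan rather than a proof. The exponent count does not substitute for it, as you note yourself.

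Carrying out your vertex tracking gives
\[
\psi(y)=(a_1,1,a_2^2,a_3)\,(1\,3\,2)=(a_1,1,1,a_3)\,(1\,3\,2),\qquad
\psi(w)=(a_1,1,1,a_4)\,(1\,2\,3).
\]
Hence
\[
\psi(y^2)=(a_1,a_1,1,a_3^2)\,(1\,2\,3)=(a_1,a_1,1,1)\,(1\,2\,3),\qquad
\psi(w^2)=(a_1,1,a_1,a_4^2)\,(1\,3\,2)=(a_1,1,a_1,1)\,(1\,3\,2).
\]
Therefore
\[
x_1|_1=a_1\cdot w^2|_2=a_1,\quad x_1|_2=a_1\cdot w^2|_3=a_1^2=1,\quad x_1|_3=1\cdot w^2|_1=a_1,\quad x_1|_4=1,
\]
which is the claim.

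\textbf{Convention mismatch with the paper.} The paper displays $\psi(u^2)=(1,a_1,a_1,1)(1\,3\,2)$ and $\psi(v^2)=(1,a_1,a_1,1)(1\,2\,3)$. These are the values for the opposite, right-to-left composition convention, not the one the paper states. Your intermediate results will therefore not match the paper's displays, but this is not an error on your part. The final tuple $(a_1,1,a_1,1)$ is the same under either convention.
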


\begin{proof}
	It is enough to prove the claim for $i=1$, since the recursion is invariant under the cyclic relabeling $a_j\mapsto a_{j+1}$. Set $	u=a_1a_2a_3a_2$, and  $	v=a_1a_4$. Using the wreath-product multiplication rule,
    %and the four defining recursions
     we get
	\[
	\psi(u^2)=(1,a_1,a_1,1)(1\ 3\ 2),
	\qquad
	\psi(v^2)=(1,a_1,a_1,1)(1\ 2\ 3).
	\]
	The root permutations cancel, and another multiplication gives
	\[
	\psi(u^2v^2)=(a_1,1,a_1,1).
	\]
	Since $u^2v^2=x_1$, this proves the claim.
\end{proof}

\begin{lemma}\label{lem:sections-middle}
	Let $G=G^{(d,m)}$ with $d\geq 5$ and $3\leq m\leq d-2$. For every $i$, set $	y_i=[a_{i-1},a_i]^2$. Then
	\[
	\psi(y_i)=
	(1,\ldots,1,
	\underset{i}{a_i},
	\underset{i+1}{a_i^{-1}},
	\underset{i+2}{a_i^{-1}},
	1,\ldots,1,
	\underset{i+m}{a_i},
	1,\ldots,1).
	\]
	Moreover, if $z_i=y_i(y_i^{a_{i-1}})^{-1}$, then $z_i\in\St_G(1)$ and
	\[
	\psi(z_i)=
	\begin{cases}
		(1,\ldots,1,\underset{i}{a_i^{-1}},
		\underset{i+1}{a_i},1,\ldots,1), & m=3,\\[4pt]
		(1,\ldots,1,\underset{i}{a_i},
		\underset{i+1}{a_i^{-2}},1,
		\underset{i+3}{a_i},1,\ldots,1), & m\geq 4.
	\end{cases}
	\]
\end{lemma}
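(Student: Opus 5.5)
The plan is a direct computation in $\Aut(\T_d)\wr\Sym(d)$, using the paper's right-action conventions: $(gh)|_u=g|_u\,h|_{u^g}$ and $[x,y]=x^{-1}y^{-1}xy$. Indices are modulo $d$ throughout.

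\emph{Step 1: the root permutation of the commutator.} Write $\sigma_{i-1}=(i\ i+1\ \cdots\ i+m-1)$ and $\sigma_i=(i+1\ \cdots\ i+m)$. These two $m$-cycles overlap in the $m-1$ points $i+1,\dots,i+m-1$. I expect $[\sigma_{i-1},\sigma_i]$ to be a double transposition supported on four points, namely $i,i+1,i+2,i+m$. To confirm this, I will evaluate it on these points and on one generic interior point, and check it directly in the case $m=3$ as a sanity test. Since a double transposition squares to the identity, this already gives $y_i\in\St_G(1)$.

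\emph{Step 2: the sections of $[a_{i-1},a_i]$ and of its square.} The element $a_{i-1}$ has its only nontrivial section at $i-1$, and $a_i$ has its only nontrivial section at $i$. Hence each of the four factors of $[a_{i-1},a_i]$ contributes exactly one nontrivial section, equal to $a_{i-1}^{\pm1}$ or $a_i^{\pm1}$. To locate these contributions, I will follow the coordinates $i-1$ and $i$ back through the partial products with the rule $(gh)|_u=g|_u\,h|_{u^g}$. Since $m\le d-2$, the point $i-1$ is fixed by both cycles, so the two contributions $a_{i-1}^{-1}$ and $a_{i-1}$ should land at the same coordinate $i-1$ and cancel. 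Only the contributions $a_i^{\pm1}$ should survive, sitting at two points of $\{i,i+1,i+2,i+m\}$. For the square $g^2$, I will use $g^2|_u=g|_u\,g|_{u^g}$. This pairs each coordinate with its partner under the double transposition, and it should produce the four entries $a_i$ at $i$, $a_i^{-1}$ at $i+1$, $a_i^{-1}$ at $i+2$ and $a_i$ at $i+m$. The main obstacle is the bookkeeping: tracking which coordinate each factor lands on, and in which order the entries multiply, under the right-action convention. This is where I will be most careful.

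\emph{Step 3: the element $z_i$.} Conjugation by $a_{i-1}$ acts on an element of $\St(1)$ in two ways. It permutes the coordinates by $\sigma_{i-1}$, and it conjugates the entry at $i-1$ by $a_{i-1}$. The support $\{i,i+1,i+2,i+m\}$ of $y_i$ avoids $i-1$ because $m\le d-2$, so no entry gets conjugated. Under $\sigma_{i-1}$, the coordinates $i,i+1,i+2$ move to $i+1,i+2,i+3$, except that for $m=3$ the coordinate $i+2$ goes back to $i$. The coordinate $i+m$ is fixed. Multiplying $y_i$ by the inverse of $y_i^{a_{i-1}}$ coordinatewise gives two cases.

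\begin{itemize}
\item For $m\ge4$, the result is $a_i$ at $i$, $a_i^{-2}$ at $i+1$, $a_i$ at $i+3$, and trivial entries elsewhere; in particular the entries at $i+2$ and $i+m$ cancel.
\item For $m=3$, the result is $a_i^2$ at $i$ and $a_i^{-2}$ at $i+1$, with all other entries trivial.
\end{itemize}

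In the case $m=3$, the element $a_i$ has order $o(\sigma_i)=3$, so $a_i^2=a_i^{-1}$ and $a_i^{-2}=a_i$, which is exactly the stated formula. Finally, $z_i\in\St_G(1)$ because $y_i\in\St_G(1)$ and $\St_G(1)$ is normal in $G$.
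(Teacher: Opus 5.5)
Your proposal is correct and follows the paper's route. You compute $[\sigma_{i-1},\sigma_i]=(i\ i+m)(i+1\ i+2)$, locate the surviving sections $a_i^{-1}$ at $i+1$ and $a_i$ at $i+m$ of $[a_{i-1},a_i]$, square, and then use that conjugation by $a_{i-1}$ only permutes the sections of $y_i$ by $\sigma_{i-1}$ because $i-1$ lies outside their support; your resulting formulas for $z_i$, including the reduction $a_i^{2}=a_i^{-1}$ when $m=3$, agree with the paper's.
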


\begin{proof}
	We have
	\[
	\sigma_{i-1}=(i\ i+1\ \cdots\ i+m-1),
	\qquad
	\sigma_i=(i+1\ i+2\ \cdots\ i+m),
	\]
	and hence
	\[
	[\sigma_{i-1},\sigma_i]=(i\ i+m)(i+1\ i+2).
	\]
	A direct computation gives
	\[
	\psi([a_{i-1},a_i])
	=(1,\ldots,1,
	\underset{i+1}{a_i^{-1}},
	1,\ldots,1,
	\underset{i+m}{a_i},
	1,\ldots,1)(i\ i+m)(i+1\ i+2),
	\]
	so squaring gives the displayed formula for $\psi(y_i)$.
	
	On the other hand, conjugation by $a_{i-1}$ introduces no conjugation in the non-trivial sections: the only non-trivial section of $a_{i-1}$ lies at $i-1$, and $i-1\notin\{i,i+1,i+2,i+m\}$ because $m\leq d-2$. Thus $y_i^{a_{i-1}}$ is obtained from $y_i$ by permuting the first level sections by $\sigma_{i-1}$. Multiplying $y_i$ by $(y_i^{a_{i-1}})^{-1}$ its inverse then gives the two displayed forms. For $m=3$ we use $a_i^2=a_i^{-1}$.
\end{proof}

\begin{proposition}\label{propstronglyfract}
	Every group $G=G^{(d,m)}$ is strongly fractal. Moreover, it is regular branch over its derived subgroup.
\end{proposition}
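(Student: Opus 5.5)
We prove strong fractality first and then regular branchness over $G'$, splitting into the cases $d=3$, $d=4$, and $d\geq 5$.

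\textbf{Strong fractality.} Since $G$ is self-similar, it suffices to show that for every coordinate $q$ and every generator $a_j$ there is an element of $\St_G(1)$ whose $q$-th section is $a_j$. The sections are then multiplied coordinatewise.

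First we produce, for each $i$, an element $w_i\in\St_G(1)$ with a section equal to $a_i^{\pm1}$. We use the explicit elements already computed:
\begin{itemize}
\item for $m=2$ and $d\geq 5$, the commutator $[a_i,a_{i-2}]$ of \Cref{lem:sections-m2};
\item for $(d,m)=(4,2)$, the element $x_i$ of \Cref{lem:smallsupport42};
\item for $3\leq m\leq d-2$, the element $y_i$ of \Cref{lem:sections-middle}.
\end{itemize}
The remaining cases are $m=d-1$ (Skipper's groups, including $\mathcal H^{(3)}$) and $m=3$, $d=4$. Both have $m=d-1$, and for these we compute a short element of $\St_G(1)$ directly. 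A natural choice is $a_i^{o(\sigma_i)}$, whose only nontrivial section is a power of $a_i$; this is useless when that power is trivial, so instead we take a commutator $[a_i,a_j]^k$, where $k$ is the order of $[\sigma_i,\sigma_j]$, or alternatively we appeal to Skipper's results.

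Next we move the section to an arbitrary coordinate. By \Cref{permutationgenerating}, $P\geq\alt(d)$ is transitive for $d\geq4$; for $d=3$, $P=\Sym(3)$. We conjugate $w_i$ by an element $z\in G$ whose root permutation sends the chosen coordinate to $q$. As in the proof of \Cref{fractandtrans}, we pick $z$ as a product of generators along a minimal word, so that $z$ has trivial section at the relevant vertex. This makes the $q$-th section of $w_i^z$ exactly $a_i^{\pm1}$, and taking inverses if necessary gives $a_i$.

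\textbf{Regular branchness over $G'$.} For $d\geq4$ we apply \Cref{strongfractimpliesbranch}: $G$ is strongly fractal by the first step and $\alt(d)\leq P$ by \Cref{permutationgenerating}, so it remains to supply suitable elements $x_i$.
\begin{itemize}
\item For $m=2$, $d\geq5$, take $[a_i,a_{i-2}]$. Its sections are $a_i,a_i^{-1}$, which commute, and the support has size $2$, so $2\cdot2-1=3\leq d$.
\item For $(4,2)$, take $x_i$ from \Cref{lem:smallsupport42}, with support of size $2$ and $3\leq4$.
\item For $3\leq m\leq d-2$, take $z_i$ from \Cref{lem:sections-middle}. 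When $m=3$ its support has size $2$. When $m\geq4$ the sections are $a_i,a_i^{-2},a_i$, which commute and include $a_i$ itself, and the support has size $3$, so $5\leq d$ holds since $d\geq m+2\geq6$.
\item For $m=d-1$, we need an analogous element. We try $y_i$ and $z_i$ computed with the same formulas: the support has size at most $(d+1)/2$, and the sections are powers of $a_i$ that commute. The relevant section is $a_i^{t}$ with $t$ coprime to $o(a_i)=d-1$, using $t=\pm1$.
\end{itemize}
The case $d=3$ is the classical $\mathcal H^{(3)}$, which is regular branch over its commutator subgroup by \cite{GrigorchukSunik07}.

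\textbf{Main obstacle.} The hardest case is $m=d-1$. There \Cref{lem:sections-middle} fails, because $i-1=i+m$ and so conjugation by $a_{i-1}$ does introduce a section. For Skipper's groups I would first try to import the branch structure over $G'$ directly from \cite{Rachel}, and only otherwise carry out the explicit computation of $[a_{i-1},a_i]^2$ by hand.
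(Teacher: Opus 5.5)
Your proposal follows the paper's proof essentially step for step: the same case split, the same explicit elements from \Cref{lem:sections-m2}, \Cref{lem:smallsupport42} and \Cref{lem:sections-middle}, the same minimal-word conjugation from \Cref{fractandtrans} to move a section to any coordinate, the same application of \Cref{strongfractimpliesbranch} with the same support counts, and the same citations of Grigorchuk--\v{S}uni\'c for $(3,2)$ and Skipper for $m=d-1$. The one thing to clean up is the case $m=d-1$, which the paper also handles only by citing Skipper; drop your tentative reuse of $y_i,z_i$ there, since it fails ($[a_{i-1},a_i]$ has the mixed section $a_{i-1}^{-1}a_i$ at coordinate $i-1$), and note that $a_i^{o(\sigma_i)}$ is always trivial because $o(a_i)=o(\sigma_i)$.
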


\begin{proof}
 First we prove strong fractality. It is enough to find, for each generator $a_i$, an element of $\St_G(1)$ having $a_i$ as a first level section: by the same minimal-word argument used in the proof of \Cref{fractandtrans}, conjugating by suitable elements of $G$ moves this section to any prescribed coordinate without changing it.
	
	The required elements are as follows.  For $(d,m)=(3,2)$ this is proved in \cite[Example~4]{Sunic}. For $(d,m)=(4,2)$ we use \Cref{lem:smallsupport42}. For $d\geq 5$ and $m=2$ we use \Cref{lem:sections-m2}. For the maximal case $m=d-1$ this is Skipper's result \cite[Theorem~3.16]{Rachel}. Finally, for $d\geq 5$ and $3\leq m\leq d-2$ we use the element $y_i$ from \Cref{lem:sections-middle}. Hence $G$ is strongly fractal.
	
	We now prove regular branch over $G'$. The cases $(d,m)=(3,2)$ and $m=d-1$ are known from \cite{Sunic} and \cite{Rachel}, respectively. In all remaining cases we apply \Cref{strongfractimpliesbranch}. Strong fractality has just been proved, and $\alt(d)\leq P$ by \Cref{permutationgenerating}. It remains to produce, for each $i$, an element of $\St_G(1)$ whose non-trivial sections are powers of $a_i$ and whose support $S_i$ satisfies $2|S_i|-1\leq d$.
	
	For $(d,m)=(4,2)$, the elements of \Cref{lem:smallsupport42} have support of size $2$. For $d\geq 5$ and $m=2$, the elements of \Cref{lem:sections-m2} also have support of size $2$. For $d\geq 5$ and $3\leq m\leq d-2$, we use the elements $z_i$ from \Cref{lem:sections-middle}: their support has size $2$ if $m=3$ and size $3$ if $m\geq 4$. Thus $2|S_i|-1\leq d$ in every case, and the criterion applies.
\end{proof}

\begin{proposition}\label{IcontainedinG'}
	Let $G=G^{(d,m)}$. Then $J\leq G$ if and only if  $d\geq 4$.
\end{proposition}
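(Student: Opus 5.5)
The proof splits into the case $d\geq 4$, where we show $J\leq G$, and the case $d=3$, where we show $J\nleq G$.

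\textbf{The case $d\geq 4$.} By \Cref{propstronglyfract}, $G$ is regular branch over $G'$. By \Cref{permutationgenerating}, $P\geq\alt(d)$, which is $2$-transitive for $d\geq 4$. So \Cref{prop:two-transitive-transfer} applies. It remains to exhibit, for each $i$, an element $x_i\in\St_G(1)$ whose sections agree modulo $G'\times\cdots\times G'$ with a tuple having $a_i$ at one coordinate, $a_i^{-1}$ at another, and $1$ elsewhere. I would handle this case by case, using the computations already available.

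\begin{itemize}
\item For $d\geq 5$ and $m=2$, \Cref{lem:sections-m2} gives this exactly.
\item For $d\geq 5$ and $m=3$, the element $z_i$ of \Cref{lem:sections-middle} gives it, with the roles of $a_i$ and $a_i^{-1}$ swapped.
\item For $d\geq 5$ and $m\geq 4$, we have $\psi(z_i)=(\dots,a_i,a_i^{-2},1,a_i,\dots)$. I would combine $z_i$ with a conjugate $z_i^{g}$, where $g$ has root permutation in $\alt(d)$ chosen to move the support to overlapping coordinates. Since only abelianized sections matter, it suffices to produce the vector $d_{p,q}(\varepsilon(a_i))$ inside the $P$-module $\Psi(\St_G(1))$. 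Along the way I would also use $\Psi(y_i)$, which has entries $(e,-e,-e,e)$ at $(i,i+1,i+2,i+m)$ with $e=\varepsilon(a_i)$. Subtracting suitable permuted copies of $\Psi(y_i)$ from one another (permutations realized by conjugation, as in the proof of \Cref{prop:two-transitive-transfer}) gives vectors of the form $e(\mathbf e_p-\mathbf e_q)$. For example, take a $3$-cycle in $\alt(d)$ fixing three of the four support points and moving the fourth to an outside coordinate; the difference of the two vectors is then supported on two coordinates with entries $\pm e$. This uses $d\geq 5$.
\item For $d=4$ and $m=2$, I would apply the same trick to $x_i$ of \Cref{lem:smallsupport42}. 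A conjugate by an element with root permutation a $3$-cycle fixing $i$ and moving $i+2$ to $i+1$ gives, after dividing, a vector with $\pm e$ at $i+1$ and $i+2$.
\item For $d=4$ and $m=3$ (Skipper's group $\mathcal H^{(4)}$), I would take an element of $\St_G(1)$ with a section $a_i$ coming from strong fractality. Its abelianized section vector $w$ has total sum in $\ker\lambda$. Subtracting its image under a $3$-cycle fixing the coordinate carrying $a_i$ leaves a vector with small support, and I expect to reach a vector of the form $e(\mathbf e_p-\mathbf e_q)$.
\end{itemize}

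Alternatively, and more safely, for $d=4$ I would run \Cref{alg:Jcriterion}. This is a finite computation in $((\mathbb Z/o)^4)^4\rtimes P$ with small orders, and it can be checked by hand or argued via the permutation module.

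\textbf{The case $d=3$.} Here $(d,m)=(3,2)$ and $G=\mathcal H^{(3)}$. I would show directly that $\delta_{1,2}(a_1)\notin G$ using an invariant on $\St_G(1)$. The abelianized section vectors of elements of $\St_G(1)$ lie in $\Psi$ of the stabilizer. Computing $H=\langle\widehat a_1,\widehat a_2,\widehat a_3\rangle$ in $((\mathbb Z/2)^3)^3\rtimes\Sym(3)$ as in \Cref{alg:Jcriterion}, I would find a linear functional, analogous to the parity equations for $\Dfive$, that vanishes on the root-trivial part of $H$ but not on $d_{1,2}(\varepsilon(a_1))$. Concretely, I expect a condition of the form $\sum_j e_{\pi(j)}(g_j)\equiv 0\pmod 2$ for suitable bijections $\pi$, as in the Grigorchuk–Šunić description of $\St_{\mathcal H^{(3)}}(1)$. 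Alternatively, the case $d=3$ follows by citing \cite{Sunic,Rachel}: $\mathcal H^{(3)}$ is not just infinite, whereas $J\leq G$ would make it just infinite by \Cref{thm:justinfinite}. Since $G$ is level transitive, this contradiction shows $J\nleq G$, and it is the cleanest route.

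\textbf{Main obstacle.} The hardest step is producing the two-coordinate vectors $d_{p,q}(\varepsilon(a_i))$ uniformly for $d\geq 4$, especially the small case $d=4$ with $m=3$. There the $3$-cycle trick has little room, so I would fall back on the finite computation.
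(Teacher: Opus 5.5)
Your treatment of $d\ge5$ with $2\le m\le d-2$, of $(d,m)=(4,2)$, and of $d=3$ is sound. Two of your choices are a little simpler than the paper's $5$-transitivity and $5$-cycle argument. For $m=3$ you use $z_i$ directly. For $4\le m\le d-2$ you use a single $3$-cycle moving $i+m$ off the support of $y_i$; this case forces $d\ge6$, so two outside coordinates exist. Deducing $J\nleq\mathcal H^{(3)}$ from \Cref{thm:justinfinite} together with the known failure of just infiniteness is also legitimate; the paper instead cites \cite[Remark~3.20]{Rachel} directly.

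The genuine gap is the maximal case $m=d-1$, i.e.\ Skipper's groups, for every $d\ge4$.

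For $d\ge5$, your bullet ``$d\ge5$ and $m\ge4$'' silently includes $m=d-1$. But \Cref{lem:sections-middle} requires $m\le d-2$, and its formula is false when $m=d-1$. Then $i+m=i-1$ is moved by $\sigma_i$, so the two sections of $a_{i-1}^{\pm1}$ in $[a_{i-1},a_i]$ no longer cancel. A direct computation shows that $\psi([a_{i-1},a_i])$ has sections $a_{i-1}^{-1}a_i$ at $i-1$, $a_i^{-1}$ at $i+1$ and $a_{i-1}$ at $i+2$, trivial elsewhere, with root permutation $(i-1\ i)(i+1\ i+2)$. Consequently every abelianized section of $y_i$, and hence of $z_i$, is a multiple of $\varepsilon(a_i)-\varepsilon(a_{i-1})$. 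No combination of $P$-translates of these vectors can equal $d_{p,q}(\varepsilon(a_i))$.

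For $(4,3)$ you offer only a heuristic, and it does not work as described. An element of $\St_G(1)$ obtained from strong fractality has uncontrolled remaining sections. Moreover, subtracting its image under a $3$-cycle fixing the coordinate that carries $a_i$ cancels exactly that entry. Running \Cref{alg:Jcriterion} would settle $(4,3)$, since regular branchness over $G'$ is available there, but you only promise it. In any case, a separate computation for each $d$ can never cover the infinitely many groups with $m=d-1$.

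The paper closes this case uniformly by invoking \cite[Proposition~3.19]{Rachel}, which supplies the required elements of $\St_G(1)$ for Skipper's groups. You need that citation, or an explicit family of elements for $m=d-1$ whose abelianized sections separate $\varepsilon(a_i)$ from $\varepsilon(a_{i-1})$.
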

\begin{proof}
Suppose $d< 4$. Since $d\geq 3$ and $2\leq m\leq d-1$, we necessarily have  $(d,m)=(3,2)$. Hence $G=G^{(3,2)}$ is the classical Hanoi towers group and  $J\not\leq\mathcal{H}^{(3)}$, see \cite[Remark 3.20]{Rachel}.

Now assume $d\geq 4$. By \Cref{permutationgenerating}, the group $P$ is $2$-transitive. By \Cref{propstronglyfract}, the group $G$ is regular branch over $G'$. Therefore, by the transfer criterion \Cref{Transfer}, it is enough to show that, for each $i$, there exists $g\in\St_G(1)$ such that
	\[
	\psi(g)\equiv
	(1,\ldots,1,\underset{p}{a_i},1,\ldots,1,
	\underset{q}{a_i^{-1}},1,\ldots,1)
	\pmod{G'\times\cdots\times G'}
	\]
	for some distinct coordinates $p,q$.
	
	If $(d,m)=(4,2)$, this follows from \Cref{lem:smallsupport42}, since $a_i^{-1}=a_i$. If $d\geq 5$ and $m=2$, it follows from \Cref{lem:sections-m2}. If $m=d-1$, this is precisely \cite[Proposition~3.19]{Rachel}.
	
	It remains to consider $d\geq 5$ and $3\leq m\leq d-2$. Put again $y_i=[a_{i-1},a_i]^2$. From \Cref{lem:sections-middle},
	\[
	\psi(y_i)\equiv
	(1,\ldots,1,
	\underset{i}{a_i},
	\underset{i+1}{a_i^{-1}},
	\underset{i+2}{a_i^{-1}},
	1,\ldots,1,
	\underset{i+m}{a_i},
	1,\ldots,1)
	\pmod{G'\times\cdots\times G'}.
	\]
	Assume first that either $d\geq 7$ or $m$ is even. Then $P$ is at least $5$-transitive. Hence we may conjugate two copies of $y_i$ so that, modulo $G'\times\cdots\times G'$,
	\[
	\psi(y_i^u)\equiv(a_i,a_i,a_i^{-1},a_i^{-1},1,\ldots,1)
	\]
	and
	\[
	\psi(y_i^v)\equiv(a_i^{-1},1,a_i,a_i,1,a_i^{-1},1,\ldots,1).
	\]
	Multiplying these two congruences gives
	\[
	\psi(y_i^u y_i^v)\equiv (1,a_i,1,1,1,a_i^{-1},1,\ldots,1),
	\]
	which is of the required form.
	
	Finally suppose $m=3$ and $d\in\{5,6\}$. Use the ordered coordinates $i-1,i,i+1,i+2,i+3$; when $d=6$, the remaining coordinate is omitted from the display and has entry $1$. Then
	\[
	\psi(y_i)\equiv(1,a_i,a_i^{-1},a_i^{-1},a_i).
	\]
	Since $P=\alt(d)$ contains the $5$-cycle
	\[
	\pi_i=(i-1\ i\ i+1\ i+3\ i+2)
	\]
	(fixing the remaining coordinate when $d=6$), choose $h_i\in G$ with root permutation $\pi_i$. Conjugation by $h_i$ only permutes coordinates modulo $G'\times\cdots\times G'$, and therefore
	\[
	\psi(y_i^{h_i})\equiv(a_i^{-1},1,a_i,a_i,a_i^{-1}).
	\]
	Thus
	\[
	\psi(y_i y_i^{h_i})\equiv(a_i^{-1},a_i,1,1,1),
	\]
	again with all omitted coordinates equal to $1$. This is of the required form, and the proof is complete.
\end{proof}
\begin{remark}
	The derived subgroup need not be the maximal branching subgroup. If $m$ is odd and $d\geq 5$, then $P=\alt(d)$ is perfect, so \Cref{perfectcase} together with \Cref{IcontainedinG'} shows that $G$ is regular branch over itself. The case $d=4$, $m=3$ is exceptional here because $\alt(4)$ is not perfect. Skipper~\cite{Rachel} determines the maximal branching subgroups in the maximal-type case $m=d-1$.
\end{remark}

\begin{theorem}\label{thm:typem-just-infinite}
	Let $G=G^{(d,m)}$ be a spinal Hanoi towers group of type $(d,m)$. Then $G$ is just infinite if and only if $d\geq 4$. Furthermore
    \begin{enumerate}
    \item If $m$ is even and $d \geq 4$, then
	\[
	\psi(\St_G(1))=
	\left\{
	(g_1,\ldots,g_d)\in G\times \stackrel{d}{\cdots}\times G\ \middle|
	\sum_{j=1}^d\sum_{i=1}^d e_i(g_j)\equiv 0 \pmod 2
	\right\}.
	\]
    \item  If $m$ is odd and $d\geq 5$, then
	\[
	\psi(\St_G(1))=G\times\stackrel{d}{\cdots}\times G,
	\]
	equivalently $\psi(G)= (G\times \stackrel{d}{\cdots}\times G)\rtimes\alt(d)$.
    \item If $(d,m)=(4,3)$, then
    \[
	\psi\bigl(\St_G(1)\bigr)=
		\left\{
		(g_1,g_2,g_3,g_4)\in G\times \stackrel{4}{\cdots}\times G  \middle|
		\sum_{j=1}^{4}\sum_{i=1}^{4}(-1)^{i-1}e_i(g_j)\equiv 0 \pmod{3}
		\right\}.
    \]
    \end{enumerate}
\end{theorem}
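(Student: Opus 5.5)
The plan is to assemble the theorem from results already established, splitting by the value of $d$. For $d\geq 4$, \Cref{propstronglyfract} makes $G$ strongly fractal and regular branch over $G'$, and \Cref{permutationgenerating} makes $P\in\{\alt(d),\Sym(d)\}$ transitive. Hence $G$ is level transitive by \Cref{fractandtrans}. Next, \Cref{IcontainedinG'} gives $J\leq G$, and \Cref{thm:justinfinite} then shows that $G$ is just infinite. For $d=3$ we must have $(d,m)=(3,2)$, so $G=\mathcal{H}^{(3)}$, which is not just infinite by \cite{Rachel}. This proves the equivalence.

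For the three descriptions of $\psi(\St_G(1))$ I would apply \Cref{prop:sufficient-condition-first-stabilizer}, which is available because $J\leq G$. In each case it then only remains to compute $\lambda$.

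In case (i), $m$ is even, so $P=\Sym(d)$ and every $\sigma_i$, being an $m$-cycle, is odd. \Cref{cor:symcase} then turns the condition into $\sum_i\sum_j e_i(g_j)\equiv 0\pmod 2$. The one point to watch is that $o(a_i)=m$ is even, so reducing each $e_i$ modulo $2$ is well defined.

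In case (ii), $m$ is odd and $d\geq 5$, so $P=\alt(d)$ is perfect and \Cref{perfectcase} gives layeredness. The semidirect product description then follows: $\psi(G)$ maps onto $P=\alt(d)$ with kernel $\psi(\St_G(1))=G^d$. Since $G^d\leq\psi(G)$, the whole fibre $G^d\tau$ lies in $\psi(G)$ for each $\tau\in\alt(d)$, and so $\psi(G)=G^d\rtimes\alt(d)$.

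Case (iii), $(d,m)=(4,3)$, is the step needing an actual computation. Here $P=\alt(4)$ and $P'=V_4$, so $P/P'\cong\mathbb{Z}/3\mathbb{Z}$ and I would use \Cref{cor:rem}. The work is to find the constants $c_i$, the images of $\sigma_i=(i{+}1\ i{+}2\ i{+}3)$ in $\alt(4)/V_4$. I would fix the isomorphism by sending $\sigma_1=(2\,3\,4)\mapsto 1$. Then I would check modulo $V_4$ that $\sigma_2=(3\,4\,1)$ is congruent to $\sigma_1^{-1}$, and that $\sigma_3=(4\,1\,2)\equiv\sigma_1$ and $\sigma_4=(1\,2\,3)\equiv\sigma_1^{-1}$. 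This amounts to computing products such as $\sigma_1\sigma_2$ and testing whether they lie in $V_4$. If the check confirms this, the constants are $c_i=(-1)^{i-1}$, which gives exactly the displayed condition modulo $3$, using $o(a_i)=3$. I expect this sign check, with the paper's right-action convention, to be the only delicate point. If the pattern came out differently, the statement would still hold up to replacing the isomorphism by its negative, which leaves the kernel unchanged.
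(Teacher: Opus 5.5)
Your proposal is correct and follows the paper's proof essentially step by step. Level transitivity and $J\leq G$ feed into \Cref{thm:justinfinite}, the case $d=3$ is $\mathcal{H}^{(3)}$, and the three descriptions come from \Cref{cor:symcase}, \Cref{perfectcase} and \Cref{cor:rem}; the paper asserts exactly your cosets $\sigma_1V_4=\sigma_3V_4$ and $\sigma_2V_4=\sigma_4V_4=(\sigma_1V_4)^{-1}$.

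That check does go through: $\sigma_1\sigma_2$, $\sigma_1\sigma_4$ and $\sigma_1\sigma_3^{-1}$ are all double transpositions. This holds whichever composition convention you use, since $V_4$ is normal, so the sign check is not actually delicate. Your fallback remark, however, would not have rescued a genuinely different pattern of cosets, because negating the isomorphism only flips all the $c_i$ at once.
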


\begin{proof}
Suppose first that $d\geq 4$. By \Cref{IcontainedinG'}, we have $J\leq G$. Moreover, by \Cref{fractandtrans} and \Cref{permutationgenerating}, the group $G$ is level transitive. Therefore, \Cref{thm:justinfinite} implies that $G$ is just infinite.

Suppose now that $d=3$. Since $2\leq m\leq d-1$, we necessarily have $m=2$. Hence $G=G^{(3,2)}$, which is the classical Hanoi towers group $\mathcal{H}^{(3)}$. This group is not just infinite. Consequently, $G$ is just infinite if and only if $d\geq 4$.

It remains to describe the image of the first-level stabilizer. From now on, assume that $d\geq 4$.

Suppose first that $m$ is even. By \Cref{permutationgenerating}, we have $	P=\Sym(d)$. Each root permutation $\sigma_i$ is an $m$-cycle. Since $m$ is even, every $\sigma_i$ is an odd permutation. Therefore, \Cref{cor:symcase} gives
\[
\psi(\St_G(1))=
\left\{
(g_1,\ldots,g_d)\in G\times \stackrel{d}{\cdots}\times G\ \middle|
\sum_{j=1}^d\sum_{i=1}^d e_i(g_j)\equiv 0 \pmod 2
\right\}.
\]

Suppose next that $m$ is odd and $d\geq 5$. By \Cref{permutationgenerating}, $P=\alt(d)$. Since $\alt(d)$ is perfect for $d\geq 5$, \Cref{perfectcase} yields
\[
	\psi(\St_G(1))=G\times\stackrel{d}{\cdots}\times G,
\]
Since the image of the action of $G$ on the first level is $\alt(d)$, it follows that $\psi(G)= (G\times \stackrel{d}{\cdots}\times G)\rtimes\alt(d)$.

It remains to consider the case where $m$ is odd and $d<5$. Since $m\geq 3$ and $m\leq d-1$, the only remaining possibility is $(d,m)=(4,3)$. In this case $P=\alt(4)$. It is well known that $\alt(4)'$  is the Klein four-group $K_4$ and also that $P/P'=\alt(4)/K_4\cong \mathbb{Z}/3\mathbb{Z}$. Moreover,
\[
\sigma_1K_4=\sigma_3K_4,
\qquad
\sigma_2K_4=\sigma_4K_4=(\sigma_1K_4)^{-1}.
\]
Choosing the isomorphism $P/P'\to\mathbb{Z}/3\mathbb{Z}$ that maps
$\sigma_1K_4$ to $1$, \Cref{cor:rem} gives
\[
\psi\bigl(\St_G(1)\bigr)=
	\left\{
	(g_1,g_2,g_3,g_4)\in G\times \stackrel{4}{\cdots}\times G  \middle|
	\sum_{j=1}^{4}\sum_{i=1}^{4}(-1)^{i-1}e_i(g_j)\equiv 0 \pmod{3}
	\right\}.
\]
\end{proof}

% \fc{The results of this sections conclude the proof of \Cref{thmC}. For $d\geq 3$ and $2\leq m\leq d-1$, strong fractality and regular branchness over the commutator subgroup of $G^{(d,m)}$ are proved in \Cref{propstronglyfract}. For  $d\geq 4$, the containment $J\leq G^{(d,m)}$   is proved in \Cref{IcontainedinG'}, while just infiniteness follows from \Cref{thm:typem-just-infinite}. }

\end{document}